\newcommand{\A}{{\cal{A}}}
\newcommand{\B}{{\cal{B}}}
\newcommand{\R}{{\cal{R}}}
\newcommand{\M}{{\cal{M}}}
\newcommand{\D}{{\cal{D}}}
\newcommand{\N}{{\cal{N}}}
\renewcommand{\H}{{\cal{H}}}
\newcommand{\K}{{\cal{K}}}
\renewcommand{\L}{{\cal{L}}}
\newcommand{\V}{{\cal{V}}}
\newcommand{\X}{{\cal{X}}}
\newcommand{\F}{{\cal{F}}}
\newcommand{\Y}{{\cal{Y}}}
\newcommand{\Z}{{\cal{Z}}}
\newcommand{\U}{{\cal{U}}}
\newcommand{\SC}{{\cal{SC}}}
\newcommand{\zv}[1]{\overset{*}{#1}}
\newcommand{\cl}[1]{\overline{#1}}
\newcommand{\sk}[2]{\langle #1, #2 \rangle}
\newcommand{\proj}{P_{\N^\bot}P_\M}
\newtheorem{theorem}{Theorem}[section]
\newtheorem{lemma}[theorem]{Lemma}
\newtheorem{corolary}[theorem]{Corollary}
\theoremstyle{definition}
\newtheorem{definition}[theorem]{Definition}
\newtheorem{rem}[theorem]{Remark}
\newtheorem{example}{Example}[section]
\def \dj {d\kern-0.4em\char"16\kern-0.1em}
\def \Dj{\mbox{\raise0.3ex\hbox{-}\kern-0.4em D}}
\title{Simultaneous extension of two bounded operators \\ between Hilbert spaces}
\author[1]{Marko S. Djiki\'c${}^{*,}$}
\author[1]{Jovana Nikolov Radenkovi\'c}
\affil[1]{\small Faculty of Sciences and Mathematics, University of Ni\v s, Vi\v segradska 33, 18000 Ni\v s, Serbia}
\date{}
\newcommand\blfootnote[1]{%
  \begingroup
  \renewcommand\thefootnote{}\footnote{#1}%
  \addtocounter{footnote}{-1}%
  \endgroup
}
\begin{document}

\maketitle

\begin{abstract}
The paper is concerned with the following question: if $A$ and $B$ are two bounded operators between Hilbert spaces $\mathcal{H}$ and $\mathcal{K}$, and $\mathcal{M}$ and $\mathcal{N}$ are two closed subspaces in $\mathcal{H}$, when will there exist a bounded operator $C:\mathcal{H}\to\mathcal{K}$ which coincides with $A$ on $\mathcal{M}$ and with $B$ on $\mathcal{N}$ simultaneously? Besides answering this and some related questions, we also wish to emphasize the role played by the class of so-called semiclosed operators and the unbounded Moore-Penrose inverse in this work. Finally, we will relate our results to  several well-known concepts, such as the operator equation $XA=B$ and the theorem of Douglas, Halmos' two projections theorem, and Drazin's star partial order.

%can be represented as a quotient of two bounded operators,   a so-called semiclosed operator, which will be an important step in deriving some of our results and putting them in a different perspective.

%If $\mathcal{M}+\mathcal{N}$ is not a closed subspace, there will exist two operators $A$ and $B$ that do not admit a simultaneous bounded extension, even in the case $\mathrm{dim} \mathcal{K}=1$
\end{abstract}

\noindent \textit{2010 MSC}: 47A20 (Primary), 46C07  (Secondary).
\\
\\
\noindent \textit{Key words}: Bounded extension; Semiclosed operators; Quotient operators; Operator ranges; Unbounded projections; Coherent pairs.

\blfootnote{$^*$Corresponding author}
%\blfootnote{The authors are supported by Grant No. 174007 of the Ministry of Education, Science and Technological Development, Republic of
%Serbia.}
\blfootnote{E-mail addresses: \texttt{marko.djikic@gmail.com} (Marko S. Djiki\'c), \texttt{jovana.nikolov@gmail.com} (Jovana Nikolov Radenkovi\'c)}

\section{Introduction}

The problem of extending a bounded linear operator $T:\X \to \Z$ from a closed subspace $\X$ of a Banach space $\Y$ to the whole space $\tilde{T}:\Y \to \Z$ is an old and deep problem in Banach space theory. For instance, the famous Hahn-Banach theorem tells us that if $\Z=\mathbb{C}$, the extension is always possible. On the other hand, in order for every bounded operator on $\X$ to have a bounded extension on $\Y$ regardless of $\Z$, it is necessary (and sufficient) for $\X$ to be a complemented subspace of $\Y$. Therefore, according to the theorem of Lindenstrauss and Tzafriri  \cite{Lindenstrauss}, the Hilbert spaces are exactly those Banach spaces $\Y$ which have this ``unconditional extension" property. However, even in a Hilbert space the problem of a simultaneous extension of two bounded operators given on two closed subspaces seems to be non-trivial. The purpose of the present article is to study this problem.

Let $\H$ and $\K$ denote two arbitrary, not necessarily separable, complex or real Hilbert spaces, and $\M$ and $\N$ two closed subspaces of $\H$. If $A:\M\to \K$ and $B:\N\to \K$ are two bounded operators, the question is whether there exist a bounded $C:\H\to \K$ which coincides with $A$ on $\M$ and with $B$ on $\N$ simultaneously. We will give necessary and sufficient conditions for such $C$ to exist, and we will study different aspects of continuity of a simultaneous extension of $A$ and $B$. %We will show that the pair of subspaces $\M$ and $\N$ allows simultaneous extension for every (reasonable) pair of operators if and only if $\M+ \N$ is a closed subspace. The dual statement for the pair $A,B:\H \to \K$ of operators says that the restrictions of $A$ and $B$ on any two (reasonable) subspaces can be simultaneously extended if and only if $A$ and $B$ are a finite rank perturbation of each other.
A particularly interesting problem is whether the operator $C_{\M,\N}$ induced on $\M+\N$ by $A$ and $B$ is a closed operator. The instrumental step in its solution is to note that $C_{\M,\N}$ belongs to the class of so-called semiclosed operators introduced by Kaufman \cite{Kaufman} (see also \cite{Caradus}), and to express it using the unbounded Moore-Penrose inverse, by virtue of a result by Corach and Maestripieri \cite{Corach3}. As we will see, even in the case $\mathrm{dim}\  \K = 1$, that is when $A$ and $B$ are two functionals, we cannot guarantee that they can be extended simultaneously to a bounded operator on the whole space. In other words, even for operators with the simplest structure, the operator $C_{\M,\N}$ can be unbounded. However, being a semiclosed operator means that $C_{\M,\N}$ is bounded on $\M+\N$ but with respect to an inner product which makes $\M+\N$ a Hilbert space continuously embedded in $\H$.

The paper is organized as follows. In Section 2 we gather the essential background needed for the development of our study. This section does not contain essentially new results, but we will include  some proofs for the reader's convenience. We start Section 3 with an example showing that our main problems are indeed meaningful, and proceed to give our first results regarding the simultaneous extension to a bounded or to a closed operator. The results of this section are based on well-known facts about unbounded operators and their adjoints, as well as on the properties of densely defined closed projections. In Section 4 the relationship with the class of semiclosed operators is explained, and in this section we give necessary and sufficient conditions for the closedness of the operator induced on $\M+\N$ by $A$ and $B$. The role that the Moore-Penrose inverse plays in our work is explained in this section. %In Section 5 we derive some further results regarding the unbounded Moore-Penrose inverse and formulas for the said operator.
The last section is comprised of several miniatures, putting our results in a somewhat different perspective: we will explain the relationship with the operator equation $XA=B$ and Douglas' famous theorem, with Halmos' canonical decomposition for two subspaces, and with the theory of partial orders for Hilbert space operators initiated by Drazin. This last relationship was already suggested in \cite{Djikic}.

\section{Preliminaries}
\label{uvod}

\subsection{Notation and conventions}

Throughout this section $\H,\K$ and $\F$ will stand for arbitrary complex Hilbert spaces, and the term subspace of a Hilbert space is used for any linear manifold, not necessarily closed.
By an operator $T$ between $\H$ and $\K$ we will always mean a linear, not necessarily bounded operator with the domain $\D(T)$ which is a subspace of $\H$, and the values in $\K$. Its range and null-space are denoted by $\R(T)$ and $\N(T)$, and if $S\subseteq \K$ then $T^{-1}(S)$ denotes the set $\{h\in \D(T)\ :\ Th \in S\}$. We always assume that the domain $\D(T)$ is equipped with the norm of $\H$ when we say that $T$ is a bounded operator. The equality $T_1=T_2$ is used to specify that $\D(T_1)=\D(T_2)$ and $T_1$ and $T_2$ coincide on this domain, unless we say on which set this equality occurs. We use $T_1\subseteq T_2$ when $\D(T_1)\subseteq \D(T_2)$ and $T_1=T_2$ on $\D(T_1)$. The notation for the graph of $T$ is $\Gamma(T):=\{(h, Th)\ :\ h\in \D(T)\}\subseteq \H\times \K$. The classes $\B(\H,\K)$, or $\B(\H)\equiv \B(\H,\H)$ are comprised of bounded everywhere defined operators.

If $\M$ is a closed subspace, then $P_\M$ denotes the orthogonal projection onto $\M$, and if $M$ and $N$ are two subspaces with a trivial intersection $M\cap N=\{0\}$, their direct sum is denoted by $M\dotplus N$, while $M\oplus N$ is used only when $M$ and $N$ are orthogonal to each other. The notation $M\ominus N$ stands for $M\cap N^\bot$.

Inner product and norm are denoted by $\sk{\cdot}{\cdot}$ and $\|\cdot\|$ in every Hilbert space, and we add a subscript $\sk{\cdot}{\cdot}_\H$ only if necessary. We write $\H\times \K$ for the usual product of Hilbert spaces, that is, $\H\times \K$ is a Hilbert space with the inner product $\sk{(h_1,k_1)}{(h_2,k_2)} = \sk{h_1}{h_2} + \sk{k_1}{k_2}$. For products of Hilbert spaces $\H\times \K$, or orthogonal decompositions $\H\oplus\K$, we use a standard notation for operator matrices. Therefore, $[A\ B] : \H\times \K \to \F$ is an operator acting as $(h,k)\mapsto Ah + Bk$, while $[A\ B]^T: \F \to \H\times \K$ acts as $f\mapsto (Af,Bf)$, etc.

\subsection{The sum of two closed subspaces}

It is a well-known fact that the sum $\M+\N$ of two closed subspaces $\M,\N\subseteq \H$ is not necessarily closed. We give a fraction of well--known results regarding this sum and refer the reader to \cite[Theorem 13, Theorem 22]{D} for the proofs.

\begin{theorem}[\cite{D}]\label{06djtD} If $\M,\N\subseteq \H$ are closed subspaces, the following statements are equivalent:
\begin{itemize}
\item[(i)] $\M+\N$ is closed;
\item[(ii)] $\M^\bot + \N^\bot$ is closed;
%\item[(iii)] $[\M \ominus (\M\cap \N)]\dotplus [\N\ominus (\M\cap \N)]$ is closed;
%\item[(iv)] $\R(P_\M+P_\N)$ is closed;
\item[(iii)] $\R(P_{\N^\bot}P_\M)$ is closed;
%\item[(vi)] $\R(P_\N(I-P_\M))$ is closed.
%\item[(iv)] $c(\M,\N)<1$.
\end{itemize}
\end{theorem}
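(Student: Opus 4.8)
The plan is to prove the cycle of implications (i) $\Rightarrow$ (iii) $\Rightarrow$ (ii) $\Rightarrow$ (i), exploiting the symmetry of the situation. First I would set $P = \proj = P_{\N^\bot}P_\M$ and record two elementary facts about this operator: its null-space is $\N(P) = \M^\bot + (\M\cap\N)$ (since $P_\M h \in \N$ iff $P_\M h \in \M\cap\N$), and, taking adjoints, $P^* = P_\M P_{\N^\bot}$ so that by the same reasoning applied to the pair $(\N^\bot,\M^\bot)$ one gets $\R(P)^\perp = \N(P^*) = \M + (\M^\bot\cap\N^\bot) = \M + (\M+\N)^\perp$. Consequently $\cl{\R(P)} = (\M+\N)\ominus\M = \M^\perp\cap(\M+\N)$, and in particular $\R(P)$ is closed if and only if $\R(P) = \M^\perp\cap(\M+\N)$.

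For (i) $\Rightarrow$ (iii): if $\M+\N$ is closed, then $\M+\N$ is itself a Hilbert space and $\M^\perp\cap(\M+\N)$ is a closed subspace of it which is the orthogonal complement of $\M$ inside $\M+\N$; hence every $x$ in $\M^\perp\cap(\M+\N)$ can be written $x = m+n$ with $m\in\M$, $n\in\N$, and then $P_{\N^\perp}P_\M n = P_{\N^\perp}P_\M(x-m) = P_{\N^\perp}x - P_{\N^\perp}m$; a short computation using $x\perp\M$ shows this equals $x$, so $x\in\R(P)$. Thus $\R(P) = \cl{\R(P)}$ is closed. For (iii) $\Rightarrow$ (ii): this is precisely (i) $\Rightarrow$ (iii) applied to the pair $(\M^\bot, \N^\bot)$ in place of $(\M,\N)$ — note that the operator associated to that pair is $P_{\N}P_{\M^\perp} = (P_{\M^\perp}P_{\N})^* $, whose range is closed iff $\R(P_{\M^\perp}P_{\N})$ is closed iff $\R(P)$ is closed, using that a bounded operator has closed range iff its adjoint does, together with the identity $I - P = I - P_{\N^\perp}P_\M$ restricted appropriately; I would phrase this more cleanly by noting directly that $\R(P_{\N^\perp}P_\M)$ closed $\Leftrightarrow$ $\R(P_\M P_{\N^\perp})$ closed (adjoint) $\Leftrightarrow$ (by the null-space/range computation above, with roles swapped) $\M^\bot+\N^\bot$ closed. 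Finally (ii) $\Rightarrow$ (i) is (i) $\Rightarrow$ (ii) applied once more to $(\M^\bot,\N^\bot)$, since $(\M^\bot)^\bot = \M$.

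The main obstacle, and the only step requiring genuine care, is the computation in (i) $\Rightarrow$ (iii) showing $\M^\perp\cap(\M+\N)\subseteq\R(P)$: one must verify that for $x = m+n\in\M^\perp$ one indeed has $P_{\N^\perp}P_\M n = x$, which unwinds to checking $P_{\N^\perp}(n) = x + P_{\N^\perp}(m) - P_\M n + \dots$; the clean way is to observe $P_\M n = -m$ (because $x\perp\M$ forces $m = -P_\M n$), so $P_{\N^\perp}P_\M n = -P_{\N^\perp}m$, and then to show $x = -P_{\N^\perp}m$, i.e. $x + m = P_\N(-m)$... hence $n = -P_\M n - (\text{something in }\N)$ — the bookkeeping is best organized by working inside the Hilbert space $\M+\N$ and using that $\M^\perp\cap(\M+\N)$ is the orthocomplement of $\M$ there, so the map $P_{\N^\perp}P_\M|_{\N}$ restricted to $\N$ surjects onto it. Everything else is then purely formal transport of this one lemma through the adjoint and the $\perp$-duality, so the write-up should be short.
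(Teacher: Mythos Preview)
First, note that the paper does not prove this theorem at all: it simply cites \cite{D} (Theorems~13 and~22) and moves on. So there is no ``paper's own proof'' to compare against, and your job is just to produce a correct short argument.

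Your proposal, however, has a genuine error that propagates through the whole sketch. When you compute $\N(P^*)$ for $P^*=P_\M P_{\N^\bot}$, the ``same reasoning'' gives
\[
\N(P^*)=\N\oplus(\M^\bot\cap\N^\bot)=\N\oplus(\M+\N)^\bot,
\]
not $\M\oplus(\M+\N)^\bot$ as you wrote. Consequently $\cl{\R(P)}=\N^\bot\cap\cl{\M+\N}$, not $\M^\bot\cap(\M+\N)$. This $\M/\N$ swap is exactly why your ``short computation'' in (i)$\Rightarrow$(iii) unravels: you try to show that $P_{\N^\bot}P_\M n=x$ for $x=m+n\in\M^\bot$, but in fact $P_{\N^\bot}P_\M n=P_{\N^\bot}(-m)=-P_{\N^\bot}m$, and there is no reason for this to equal $x$. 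With the correct target the computation is one line: for $x=m+n\in\N^\bot\cap(\M+\N)$ one has $P_{\N^\bot}P_\M m=P_{\N^\bot}m=P_{\N^\bot}(m+n)=P_{\N^\bot}x=x$.

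There is also a logical gap in your cycle. The step (iii)$\Rightarrow$(ii) is \emph{not} ``(i)$\Rightarrow$(iii) applied to the pair $(\M^\bot,\N^\bot)$'': that application would yield the implication ``$\M^\bot+\N^\bot$ closed $\Rightarrow$ $\R(P_\N P_{\M^\bot})$ closed'', which is the wrong direction. Your ``cleaner'' reformulation via the adjoint is on the right track but is not justified by anything you have written, since you never established the reverse implication (iii)$\Rightarrow$(i). The clean fix is to use the explicit range formula (which the paper records just after Lemma~\ref{djl7}):
\[
\R(P_{\N^\bot}P_\M)=\N^\bot\cap(\M+\N),
\]
and the elementary orthogonal decomposition $\M+\N=\N\oplus[\N^\bot\cap(\M+\N)]$. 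These two lines give (i)$\Leftrightarrow$(iii) outright; then (ii)$\Leftrightarrow$(iii) follows by applying the same equivalence to $P^*=P_\M P_{\N^\bot}$ together with the closed range theorem.
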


Recall that $$(\M\cap \N)^\bot = \cl{\M^\bot + \N^\bot}, \quad \mbox{and} \quad (\M + \N)^\bot = \cl{\M + \N}^\bot = \M^\bot\cap \N^\bot.$$ Note that the sum $\M+\N$ can always be expressed as a sum of two closed subspaces with a trivial intersection: $\M + \N = (\M\ominus (\M\cap \N)) \dotplus \N$, or as an orthogonal sum of one closed subspace: $\M\cap \N$ and one not necessarily closed subspace: $[\M\ominus(\M\cap \N)]\dotplus [\N\ominus(\M\cap\N)]$.

\subsection{Unbounded operators and projections}

We use the terms {\it closed} and {\it closable} for an operator in the usual way: an operator $T$ between $\H$ and $\K$ is closed if $\Gamma(T)$ is a closed set of $\H\times \K$, and it is closable if $T\subseteq T_1$ for some closed operator $T_1$. The following equivalences are straightforward (see \cite{Schmudgen}):
\begin{equation}\label{djeq11}T \mbox{ is closed }\ \Leftrightarrow \ ( (x_n)_{n\in \mathbb{N}} \subseteq \D(T),\ x_n\to x\ \ \&\ \ Tx_n \to y \ \Rightarrow \ x\in \D(T)\ \ \&\ \ Tx=y. )\end{equation}
$$T \mbox{ is closable }\ \Leftrightarrow \ ( (x_n)_{n\in \mathbb{N}} \subseteq \D(T),\ x_n\to 0\ \ \&\ \ Tx_n \to y \ \Rightarrow \ y=0. )$$
If $T_i:\D(T_i)\to \K,\ \D(T_i)\subseteq \H$, $i=1,2$, then  $T_1+T_2$ stands for the operator with the domain $\D(T_1)\cap \D(T_2)$ which is defined in the obvious way, while in the case that $T_1:\D(T_1)\to \K,\ \D(T_1)\subseteq \H$ and $T_2:\D(T_2) \to \F,\ \D(T_2)\subseteq \K$, the operator $T_2T_1$ has the domain $T_1^{-1}(\D(T_2))$ and acts as a classical composition.

The adjoint $T^*$ of a densely defined operator $T:\D(T)\to \K,\ \cl{\D(T)}=\H$ is defined as usual, and we gather some well-known facts in the following lemma, for the sake of convenience.

\begin{lemma}[See \cite{Schmudgen}]\label{djl6}
Let $T_i:\D(T_i)\to \K,\ \D(T_i)\subseteq \H$, $i=1,2$, and  $T_3:\D(T_3) \to \F,\ \D(T_3)\subseteq \K$.
\begin{itemize}
  \item[1.] If $T_1$ is closed, then $T_1$ is bounded if and only if $\D(T_1)$ is closed.
  \item[2.] If $T_1$ is densely defined, then $T_1^*$ is closed. Moreover, $T_1$ is closable if and only if $\D(T_1^*)$ is dense in $\K$, and $T_1$ is bounded if and only if $\D(T_1^*)=\K$.
  \item[3.] If $T_1$ is densely defined and $T_2\in \B(\H,\K)$, then $(T_1+T_2)^*=T_1^*+T_2^*$.
  \item[4.] If $T_1,T_3$ and $T_3T_1$ are densely defined, then $(T_3T_1)^* \supseteq T_1^*T_3^*$. If $T_1$ is densely defined and $T_3\in\B(\K,\F)$, then $(T_3T_1)^*=T_1^*T_3^*$.
\end{itemize}

\end{lemma}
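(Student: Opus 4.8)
The plan is to reduce all four assertions to two elementary tools: the closed graph theorem, and the description of the graph of an adjoint as an orthogonal complement in the product space. Concretely, I would first record that for densely defined $T$, unwinding the defining relation $\sk{Tx}{y}=\sk{x}{T^*y}$ (for $x\in\D(T)$) gives $\Gamma(T)^\bot=\{(-T^*v,v)\ :\ v\in\D(T^*)\}$ inside $\H\times\K$ (identifying $\K\times\H$ with $\H\times\K$ via the unitary $(y,z)\mapsto(-z,y)$), so that $\Gamma(T^*)$ is, up to this unitary, an orthogonal complement. Assertion 1 is then immediate from the closed graph theorem in one direction — if $T_1$ is closed with $\D(T_1)$ closed, then $\D(T_1)$ is a Hilbert space, $T_1$ is everywhere defined and closed on it, hence bounded — and in the other direction: if $T_1$ is closed and bounded, a Cauchy sequence in $\D(T_1)$ has convergent image by boundedness, so closedness drags its limit into $\D(T_1)$, forcing $\D(T_1)$ to be complete, hence closed in $\H$.

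For assertion 2, closedness of $T_1^*$ falls out of the graph identity, an orthogonal complement being closed. For the closability criterion I would compute that $(0,y)\in\overline{\Gamma(T_1)}=\Gamma(T_1)^{\bot\bot}$ if and only if $\sk{y}{v}=0$ for all $v\in\D(T_1^*)$, i.e.\ $y\in\D(T_1^*)^\bot$; since $T_1$ is closable precisely when $\overline{\Gamma(T_1)}$ contains no nonzero vector of the form $(0,y)$, this is equivalent to $\D(T_1^*)$ being dense. The boundedness criterion then follows: if $T_1$ is bounded it extends continuously to $\overline{T_1}\in\B(\H,\K)$, and since the adjoint is insensitive to this closure, $\D(T_1^*)=\D((\overline{T_1})^*)=\K$; conversely, if $\D(T_1^*)=\K$ then $T_1^*$ is closed and everywhere defined, hence bounded by assertion 1, so $T_1^{**}$ is bounded and everywhere defined, and as $\D(T_1^*)=\K$ is dense $T_1$ is closable with $T_1\subseteq T_1^{**}$, whence $T_1$ is bounded.

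Assertions 3 and 4 are direct manipulations of $\sk{Tx}{y}=\sk{x}{T^*y}$, exploiting that an everywhere-defined bounded operator imposes no domain constraint. In 3, $\D(T_1+T_2)=\D(T_1)$ is dense, and for $y\in\D(T_1^*)$ one has $\sk{(T_1+T_2)x}{y}=\sk{x}{(T_1^*+T_2^*)y}$ for all $x\in\D(T_1)$, giving $(T_1+T_2)^*\supseteq T_1^*+T_2^*$; reading the identity backwards — subtracting the bounded term $\sk{x}{T_2^*y}$ — yields the reverse inclusion. In 4, $\D(T_1^*T_3^*)=(T_3^*)^{-1}(\D(T_1^*))$, and for $z$ there and $x\in\D(T_3T_1)$ one chains $\sk{T_3T_1x}{z}=\sk{T_1x}{T_3^*z}=\sk{x}{T_1^*T_3^*z}$ (legitimate because $T_1x\in\D(T_3)$ and $T_3^*z\in\D(T_1^*)$), giving $(T_3T_1)^*\supseteq T_1^*T_3^*$. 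When $T_3\in\B(\K,\F)$ one has $\D(T_3T_1)=\D(T_1)$ dense and $T_3^*$ everywhere defined, so for $z\in\D((T_3T_1)^*)$ the relation $\sk{T_1x}{T_3^*z}=\sk{x}{(T_3T_1)^*z}$ holds for all $x\in\D(T_1)$, which by the definition of $T_1^*$ forces $T_3^*z\in\D(T_1^*)$ and $T_1^*T_3^*z=(T_3T_1)^*z$, i.e.\ the reverse inclusion.

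The only step requiring real care is the closability equivalence in assertion 2: one must phrase ``closable'' correctly as ``$\overline{\Gamma(T_1)}$ is a graph'', connect this to single-valuedness (no nonzero $(0,y)$ in the closure), and match it against the orthogonality computation $(0,y)\in\Gamma(T_1)^{\bot\bot}\Leftrightarrow y\perp\D(T_1^*)$; the rest is bookkeeping with graphs and the closed graph theorem. Since the lemma is attributed to a standard text, one could alternatively simply cite it, but the route above makes it self-contained.
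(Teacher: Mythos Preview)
Your proposal is correct. The paper does not prove this lemma at all---it simply records the statements with the attribution ``See \cite{Schmudgen}'' and moves on---so there is nothing to compare against beyond noting that what you have written is the standard route (graph of the adjoint as the image of $\Gamma(T)^\bot$ under a unitary, closed graph theorem, and direct manipulation of the defining relation $\sk{Tx}{y}=\sk{x}{T^*y}$). Your argument is self-contained and accurate; in particular the closability step in assertion~2, which you flagged as the only delicate point, is handled correctly via $(0,y)\in\Gamma(T_1)^{\bot\bot}\Leftrightarrow y\perp\D(T_1^*)$.
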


The statements of the following lemma will be used frequently, but the proofs are straightforward and we omit them. Statement 2 can be given in a more general form, but this form will suffice for the present work.

\begin{lemma}\label{djl9} Let $T:\D(T)\to \K,\ \D(T)\subseteq \H$.
\begin{itemize}
  \item[1.] If $A\in \B(\H,\K)$, then $T+A$ is bounded (closed, closable) if and only if $T$ is bounded (closed, closable).
  \item[2.] If $\D(T)=N \oplus \M$, where $\M$ is a closed subspace of $\H$, and $T$ is bounded on $\M$, then $T$ is bounded (closed, closable) if and only if the restriction of $T$ to $N$ is bounded (closed, closable).
  \item[3.] If $A\in \B(\F,\H)$, and $T$ is closed, then $TA$ is a closed operator.
\end{itemize}
\end{lemma}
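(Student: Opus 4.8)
The plan is to derive all three statements from elementary manipulations with graphs in $\H\times\K$.

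For statement~1, I would first note that $\D(T+A)=\D(T)$ since $A$ is everywhere defined, which makes boundedness trivial ($T=(T+A)+(-A)$ with $-A\in\B(\H,\K)$). For the closed and closable cases I would use the linear homeomorphism $\Phi\colon\H\times\K\to\H\times\K$, $\Phi(x,y)=(x,y+Ax)$, with inverse $(x,y)\mapsto(x,y-Ax)$, which carries $\Gamma(T)$ onto $\Gamma(T+A)$. Continuity of $\Phi$ and $\Phi^{-1}$ gives that $\Gamma(T)$ is closed iff $\Gamma(T+A)$ is; and since $\Phi$ fixes every vector $(0,y)$ and $\overline{\Gamma(T+A)}=\Phi\big(\overline{\Gamma(T)}\big)$, the closure $\overline{\Gamma(T)}$ is a graph iff $\overline{\Gamma(T+A)}$ is --- which is the closability assertion.

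For statement~2, write $T_\M$, $T_N$ for the restrictions of $T$ to $\M$ and to $N$ and set $A:=T_\M P_\M\in\B(\H,\K)$ (using that $T_\M$ is bounded). Then $T-A$ has domain $N\oplus\M$, vanishes on $\M$, and equals $T_N$ on $N$, so by statement~1 I may assume $T|_\M=0$ from the start. Under this reduction $T$ is bounded iff $T_N$ is, because $\|x_N\|\le\|x\|$ in the orthogonal splitting $x=x_N+x_\M$. For the other two cases one checks, using $N\perp\M$, that $\Gamma(T)=\Gamma(T_N)\oplus(\M\times\{0\})$ as an orthogonal direct sum in $\H\times\K$, that $\M\times\{0\}$ is closed (here $\M$ closed is used), and that $\Gamma(T_N)=\Gamma(T)\cap(\M^\bot\times\K)$. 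Hence $\Gamma(T)$ closed forces $\Gamma(T_N)$ closed, while conversely the orthogonal sum of the closed subspaces $\Gamma(T_N)$ and $\M\times\{0\}$ is closed; and $\overline{\Gamma(T)}=\overline{\Gamma(T_N)}\oplus(\M\times\{0\})$, so, since $\overline{N}\perp\M$, a vector $(0,y)$ belongs to this sum only if it already belongs to $\overline{\Gamma(T_N)}$, giving the closability equivalence.

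For statement~3 I would simply remark that $\D(TA)=A^{-1}(\D(T))$ and that the bounded map $\Psi\colon\F\times\K\to\H\times\K$, $\Psi(f,k)=(Af,k)$, satisfies $\Psi^{-1}(\Gamma(T))=\Gamma(TA)$, so $\Gamma(TA)$ is closed whenever $\Gamma(T)$ is. There is no genuine obstacle in any of this; the only things worth watching are, in statement~2, that the hypotheses ``$\M$ closed'' and ``$T$ bounded on $\M$'' are each actually used, and that the closable case everywhere has to be run on closures of graphs rather than on the graphs directly.
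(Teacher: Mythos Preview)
Your proof is correct. The paper itself omits the proof entirely, calling it ``straightforward,'' so there is no alternative approach to compare with; your graph-manipulation arguments (the shear $\Phi$ in part~1, the orthogonal splitting $\Gamma(T)=\Gamma(T_N)\oplus(\M\times\{0\})$ in part~2, and the preimage $\Psi^{-1}(\Gamma(T))$ in part~3) are exactly the standard way one fills in these details.
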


The word {\it projection} is reserved for any operator $T:\D(T)\to \H,\ \D(T)\subseteq \H$ satisfying $\R(T)\subseteq \D(T)$ and $T^2 x = Tx$ for all $x\in \D(T)$. For a projection $T$ we have $\D(T)=\R(T)\dotplus \N(T)$, and we will use the notation $P_{\R(T)//\N(T)}$ (on the other hand, {\it orthogonal projections} denoted by $P_\M$ are always everywhere defined). Conversely, every pair of subspaces $M,N\subseteq \H$ such that $M\cap N=\{0\}$ defines a projection with the domain $M\dotplus N$, namely, $P_{M//N}$. A classical reference on the subject of unbounded projections is \cite{Ota}, see also \cite{Ando}. We gather important facts in the following lemma.

\begin{lemma}\label{djl7}
Let $T=P_{M//N}$.
\begin{itemize}
  \item[1.] $T$ is closed if and only if both $M$ and $N$ are closed.
  \item[2.] If $M$ and $N$ are closed, then $T$ is bounded if and only if $M\dotplus N$ is closed.
  \item[3.] If $T$ is closed and densely defined, then $T^*$ is also a closed densely defined projection with the range $\R(T^*)=N^\bot$ and the null-space $\N(T^*)=M^\bot$.

\end{itemize}
\end{lemma}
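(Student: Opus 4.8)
The plan is to treat the three items in turn, each resting on material already in place above. For the nontrivial implications in item 1 I would use graph arguments. If $M$ and $N$ are closed, take $(x_n)\subseteq\D(T)=M\dotplus N$ with $x_n\to x$ and $Tx_n\to y$; writing $x_n=m_n+p_n$ with $m_n\in M$, $p_n\in N$, we get $m_n=Tx_n\to y$, so $y\in M$, whence $p_n=x_n-m_n\to x-y\in N$; therefore $x=y+(x-y)\in\D(T)$ and $Tx=y$, so $T$ is closed. Conversely, if $T$ is closed, then $N=\N(T)$ is the null-space of a closed operator, hence closed, while $M=\R(T)=\{x\in\D(T):Tx=x\}=\N\big((T-I)|_{\D(T)}\big)$; since $-I\in\B(\H)$, Lemma~\ref{djl9}(1) shows $(T-I)|_{\D(T)}$ is closed, so its null-space $M$ is closed as well. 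Item 2 then follows at once: by item 1, closedness of $M$ and $N$ makes $T$ closed, and by Lemma~\ref{djl6}(1) a closed operator is bounded iff its domain is closed, the domain here being $M\dotplus N$.

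For item 3, the plan is to compute $T^*$ directly from the defining relation of the adjoint. For $y,z\in\H$, one has $y\in\D(T^*)$ with $T^*y=z$ exactly when $\sk{m}{y}=\sk{m+p}{z}$ for all $m\in M$ and $p\in N$; setting $m=0$ forces $z\in N^\bot$, setting $p=0$ forces $y-z\in M^\bot$, and conversely these two conditions suffice. Hence $\D(T^*)=N^\bot+M^\bot$, and $T^*y$ is the $N^\bot$-summand of $y$ in such a decomposition. Since $T$ is densely defined, $M+N$ is dense, so $M^\bot\cap N^\bot=(M+N)^\bot=\{0\}$; thus the sum is direct and $T^*=P_{N^\bot//M^\bot}$, which is a projection with $\R(T^*)=N^\bot$ and $\N(T^*)=M^\bot$, closed by Lemma~\ref{djl6}(2) (or, since $N^\bot$ and $M^\bot$ are closed, by item 1). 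Finally $\D(T^*)=N^\bot\dotplus M^\bot$ has orthogonal complement $M\cap N=\{0\}$, so $T^*$ is densely defined, which finishes item 3.

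I do not expect a genuine obstacle in any of these steps; the only places needing a little care are recognizing in item 1 that $\R(T)$ coincides with the fixed-point set $\N\big((T-I)|_{\D(T)}\big)$ so that closedness of $T$ passes to $M$, and, in item 3, keeping the roles of the four subspaces straight so that the adjoint comes out as $P_{N^\bot//M^\bot}$ rather than its formal inverse.
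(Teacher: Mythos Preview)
Your argument is correct in all three parts. For item~2 your reasoning coincides with the paper's: once $T$ is closed, boundedness is equivalent to closedness of $\D(T)=M\dotplus N$ by Lemma~\ref{djl6}(1). For items~1 and~3, however, the paper simply cites \cite{Ota} (Lemma~3.5 and Proposition~3.4) and \cite{Schmudgen}, whereas you supply direct, self-contained proofs: a graph-closure argument for item~1, and an explicit computation of $\Gamma(T^*)$ from the defining relation $\sk{m}{y}=\sk{m+p}{z}$ for item~3. Your route has the advantage of being elementary and of making the identity $T^*=P_{N^\bot//M^\bot}$ completely transparent, while the paper's route keeps the exposition short by outsourcing to the literature. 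One small point worth making explicit in your item~3: when you write $(N^\bot\dotplus M^\bot)^\bot=M\cap N$, you are tacitly using that $M$ and $N$ are closed (so that $(N^\bot)^\bot=N$ and $(M^\bot)^\bot=M$), which follows from the hypothesis that $T$ is closed via your item~1; stating this would tighten the argument.
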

\begin{proof}
{\bf 1.} See \cite[Lemma 3.5]{Ota}.
\\
\\
{\bf 2.} If $M$ and $N$ are closed, $T$ is closed, and so it is bounded if and only if $\D(T)$ is closed. See also \cite[Theorem 12]{D}.
\\
\\
{\bf 3.}  In \cite[Proposition 3.4]{Ota} it is proved that $T^*$ is a projection so it is a closed projection, hence $\R(T^*)$ and $\N(T^*)$ are closed. The range and null-space relations are classical (see \cite{Schmudgen}).
\end{proof}

If $P:\H \to \H$ is an arbitrary projection, and $A:\H \to \H$ is an arbitrary operator %(or a function for that matter),
we can readily check that
\begin{equation}\label{ObrisiLabelu?}\R(PA)=\R(P)\cap(\N(P)+\R(A)),\quad \N(AP)=\N(P)\dotplus(\R(P)\cap\N(A))\end{equation}
and we will use this throughout the article, without emphasizing it.

\subsection{Moore-Penrose generalized inverse}

Let $A\in \B(\H,\K)$ and denote by $A_0:\N(A)^\bot \to \R(A)$ the restriction of $A$ to $\N(A)^\bot$ and $\R(A)$, which is a bijection. {\it The Moore-Penrose inverse} of $A$, denoted by $A^\dag$, is the operator with the domain $\D(A^\dag)=\R(A)\oplus \R(A)^\bot$ which is equal to $A_0^{-1}$ on $\R(A)$ and is equal to $0$ on $\R(A)^\bot$. We refer the reader to \cite{BenIsrael} for the vast material on this subject, and we put all the properties we need in one lemma. The first four statements are contained in \cite{BenIsrael}, while the last one is a direct corollary of the closed graph theorem and Lemma \ref{djl9}

\begin{lemma}\label{djl8}
Let $A\in \B(\H,\K)$.
\begin{itemize}
  \item[1.] $A^\dag$ is a densely defined closed operator,  $\R(A^\dag)=\N(A)^\bot=\cl{\R(A^*)}$ and $\N(A^\dag)=\R(A)^\bot=\N(A^*)$.
  \item[2.] $A^\dag A = P_{\N(A)^\bot}$ and $AA^\dag = P_{\R(A)//\R(A)^\bot}$.
  \item[3.] $A^\dag$ is bounded if and only if the range $\R(A)$ is closed.
  \item[4.] $(A^*)^\dag = (A^\dag)^*$.
  \item[5.] If $B\in \B(\F, \K)$ and $\R(B)\subseteq \R(A)\oplus \R(A)^\bot$ then $A^\dag B \in \B(\F, \H)$.
\end{itemize}
\end{lemma}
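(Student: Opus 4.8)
The plan is to show that $A^\dag B$ is everywhere defined, closed, and bounded by invoking Lemma \ref{djl9} and the closed graph theorem, exactly as the paper hints. First I would check that $A^\dag B$ is everywhere defined on $\F$: by definition $A^\dag B$ has domain $B^{-1}(\D(A^\dag))$, and since $\D(A^\dag) = \R(A)\oplus\R(A)^\bot$ and we are assuming $\R(B)\subseteq \R(A)\oplus\R(A)^\bot = \D(A^\dag)$, we get $B^{-1}(\D(A^\dag)) = \F$. So $A^\dag B$ is an everywhere defined operator from $\F$ to $\H$.

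Next I would argue that $A^\dag B$ is closed. Since $B\in\B(\F,\K)$ and, by Lemma \ref{djl8}.1, $A^\dag$ is a closed operator, statement 3 of Lemma \ref{djl9} (if $A\in\B(\F,\H)$ and $T$ is closed, then $TA$ is closed — applied here with $T = A^\dag$ and the bounded operator being $B$) gives that $A^\dag B$ is closed. Finally, a closed operator which is everywhere defined on a Hilbert space is bounded by the closed graph theorem (equivalently, by Lemma \ref{djl6}.1, since $\D(A^\dag B) = \F$ is closed). Hence $A^\dag B\in\B(\F,\H)$.

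There is essentially no obstacle here: the only point requiring a moment's care is the domain computation, namely that the composition convention $\D(A^\dag B) = B^{-1}(\D(A^\dag))$ together with the hypothesis $\R(B)\subseteq\D(A^\dag)$ forces the domain to be all of $\F$; once that is in place, closedness of the composition and the closed graph theorem finish the argument immediately. One could alternatively avoid the closed graph theorem by writing $A^\dag B = A^\dag P_{\R(A)//\R(A)^\bot} B = A^\dag A A^\dag B$ and noting $A^\dag B$ maps into $\R(A^\dag) = \N(A)^\bot$ where $A$ acts boundedly and boundedly invertibly onto its closed... — but this is more contrived, and the closed-graph route is cleaner, so that is the one I would present.
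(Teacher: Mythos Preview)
Your proof is correct and follows exactly the approach the paper indicates: deduce that $A^\dag B$ is everywhere defined from the range hypothesis, use Lemma~\ref{djl9}.3 together with the closedness of $A^\dag$ (statement~1) to conclude that $A^\dag B$ is closed, and then apply the closed graph theorem. This is precisely what the paper means by ``a direct corollary of the closed graph theorem and Lemma~\ref{djl9}.''
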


\subsection{Semiclosed operators}

For a subspace $M\subseteq \H$ we say that it is an {\it operator range} if there exists a Hilbert space $\K$ and $A\in \B(\K,\H)$ such that $M=\R(A)$, or equivalently, if $M=\R(A)$ for some $A\in \B(\H)$ (recall that $\R(A)=\R((AA^*)^{1/2})$). The sum of two operator ranges is again an operator range, that is  $\R(A)+\R(B)=\R((AA^* + BB^*)^{1/2})$. The operator ranges are characterized by the following property:
$M\subseteq \H$ is an operator range if and only if it admits an inner product $\sk{\cdot}{\cdot}_M$ with respect to which it becomes a Hilbert space, and the embedding $J:M\to \H$,\ \ $J:x \mapsto x$, is a continuous operator between Hilbert spaces $(M,\sk{\cdot}{\cdot}_M)$ and $(\H,\sk{\cdot}{\cdot}_\H)$. The term {\it semiclosed subspace} is sometimes used for operator ranges, to emphasize this characterizing property.  For these and many other results the reader should address \cite{Fillmore}. %. Namely, every such $M$ is indeed the range of a bounded operator, for example $J$. On the other hand, if $M=\R(T)$, then with the inner product $\sk{m}{n}_M : = \sk{T^\dag m}{T^\dag n}_\K$, for every $m,n\in M$, $M$ becomes a Hilbert space which is continuously embedded in $\H$.
%Therefore, the family of semiclosed subspaces is closed under sums and intersections.

%For a subspace $M$ of $\H$ we say that it is {\it semiclosed} in $\H$ if it admits an inner product $\sk{\cdot}{\cdot}_M$ with respect to which it becomes a Hilbert space, and the embedding $J:M\to \H$,\ \ $J:x \mapsto x$, is a continuous operator between Hilbert spaces $(M,\sk{\cdot}{\cdot}_M)$ and $(\H,\sk{\cdot}{\cdot}_\H)$. Semiclosed subspaces of $\H$ are exactly the ranges of bounded operators, see \cite[Theorem 1.1]{Fillmore}.%. Namely, every such $M$ is indeed the range of a bounded operator, for example $J$. On the other hand, if $M=\R(T)$, then with the inner product $\sk{m}{n}_M : = \sk{T^\dag m}{T^\dag n}_\K$, for every $m,n\in M$, $M$ becomes a Hilbert space which is continuously embedded in $\H$.
%Therefore, the family of semiclosed subspaces is closed under sums and intersections.

Next we introduce the notion of \emph{semiclosed operators} and the (equivalent) notion of {\it quotients of bounded operators}. Unlike in the majority of the existing literature on the subject, our semiclosed operators act between different Hilbert spaces. However, this is only a technical difference, and all the results that we will invoke here, although proved in a different setting, remain true in our setting as well.

\begin{definition}
Let $T:\D(T)\to \K,\ \D(T)\subseteq \H$. The operator $T$ is semiclosed if $\Gamma(T)$ is a semiclosed subspace in $\H\times \K$. The set of all semiclosed operators between $\H$ and $\K$ (i.e. with domains in $\H$ and values in $\K$) is denoted by $\SC(\H,\K)$.
\end{definition}

This definition was given by Kaufman \cite{Kaufman} (cf. \cite{Caradus}), and there are several equivalent ways to define semiclosed operators. For example, $T\in \SC(\H,\K)$ if and only if $\D(T)$ is a semiclosed subspace of $\H$, and $T \in \B((\D(T),\sk{\cdot}{\cdot}'), \K)$ with respect to some (in fact, any) inner product $\sk{\cdot}{\cdot}'$ showing that $\D(T)$ is semiclosed.

\begin{lemma}[\cite{Kaufman}]\label{djl10}
%\begin{itemize}
%\item[1.] Every closed operator is semiclosed. Every bounded operator with a semiclosed domain is a semiclosed operator. (\textcolor{red}{PItanje je da li ovo treba}
%\item[2.]
If $T_1,T_2\in \SC(\H,\K)$ then $T_1+T_2\in \SC(\H,\K)$. If $T_1\in \SC(\H,\K)$ and $T_2\in \SC(\K,\F)$ then $T_2T_1\in \SC(\H,\F)$.
%\end{itemize}
\end{lemma}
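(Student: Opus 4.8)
The plan is to use the characterization of semiclosed operators in terms of graphs. Recall that $T \in \SC(\H,\K)$ exactly when $\Gamma(T)$ is an operator range in $\H \times \K$. So the two assertions translate into statements about operator ranges: closure under sum and under composition. I would first set up the graph viewpoint carefully and then reduce each part to the two known stability facts for operator ranges recalled in the Preliminaries: the sum of two operator ranges is an operator range, and (implicitly, via the embedding picture) the continuous linear image of an operator range is an operator range.

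For the sum $T_1 + T_2$: its graph is $\Gamma(T_1+T_2) = \{(h, T_1 h + T_2 h) : h \in \D(T_1)\cap\D(T_2)\}$. I would exhibit this as the image of $\Gamma(T_1)\times\Gamma(T_2) \subseteq (\H\times\K)\times(\H\times\K)$ — itself an operator range, since a product of operator ranges is an operator range (realize $\Gamma(T_i) = \R(A_i)$ and take $A_1\oplus A_2$) — under a bounded linear map. Concretely, intersect $\Gamma(T_1)\times\Gamma(T_2)$ with the closed subspace $\{((h,k_1),(h',k_2)) : h = h'\}$ (closed subspaces are operator ranges, and the intersection of two operator ranges is an operator range), and then apply the bounded map $((h,k_1),(h,k_2)) \mapsto (h, k_1+k_2)$. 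The image is precisely $\Gamma(T_1+T_2)$, and a bounded linear image of an operator range is an operator range, so $\Gamma(T_1+T_2)$ is semiclosed.

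For the composition $T_2 T_1$ with $T_1 \in \SC(\H,\K)$, $T_2 \in \SC(\K,\F)$: here $\Gamma(T_2 T_1) = \{(h,f) : h \in \D(T_1),\ T_1 h \in \D(T_2),\ f = T_2 T_1 h\}$. I would obtain this from $\Gamma(T_1)\times\Gamma(T_2) \subseteq (\H\times\K)\times(\K\times\F)$ by intersecting with the closed subspace that forces the $\K$-coordinate of the first factor to equal the $\K$-coordinate of the second factor — i.e. $\{((h,k),(k',f)) : k = k'\}$ — which pins down exactly the pairs with $T_1 h = k \in \D(T_2)$; then project onto the $(h,f)$ coordinates by the bounded map $((h,k),(k,f)) \mapsto (h,f)$. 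Again the intersection of operator ranges is an operator range and a bounded image of an operator range is an operator range, so $\Gamma(T_2 T_1)$ is semiclosed.

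The only real subtlety — the point I would be most careful about — is that none of the three stability properties of operator ranges (product, intersection with a closed subspace, bounded linear image) is literally proved in the excerpt; only ``sum of two operator ranges is an operator range'' and the embedding characterization are quoted. I would therefore either cite \cite{Fillmore} for all three (they are standard there) or note that each reduces quickly to the quoted facts: a product is handled by $A_1\oplus A_2$; a bounded image $\Phi(\R(A)) = \R(\Phi A)$ directly; and intersection with a closed subspace $L$ follows since $\R(A)\cap L = \R(A P)$ where, after identifying via the embedding, one uses that $L$ is itself an operator range together with the formula for the sum — alternatively one invokes the known fact $\R(A)\cap\R(B) = \R\big((AA^*)^{1/2}(AA^*+BB^*)^{\dagger/2}B\big)$ or simply cites it. With those three facts in hand the argument is the routine diagram-chase above. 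I do not expect any essential difficulty; the task is mainly to phrase the graph manipulations cleanly and to make sure the identifications $\Gamma(T_1+T_2)$ and $\Gamma(T_2T_1)$ with the respective images are exact, not merely inclusions.
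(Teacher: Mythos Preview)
The paper does not prove this lemma; it is stated with a citation to Kaufman and no argument is given. So there is nothing to compare your plan against on the paper's side.

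Your plan is correct and is essentially the standard argument: realize $\Gamma(T_1+T_2)$ and $\Gamma(T_2T_1)$ as bounded linear images of the intersection of $\Gamma(T_1)\times\Gamma(T_2)$ with a closed ``diagonal'' subspace, and then invoke the stability of operator ranges under products, intersection with closed subspaces, and bounded linear images. All three stability facts are in Fillmore--Williams \cite{Fillmore}, as you note. One small slip: the explicit formula you wrote for $\R(A)\cap\R(B)$ is not quite right; the correct device is the parallel sum (cf.\ the proof of Lemma~\ref{djl12} in the paper, where $S=B^*B(A^*A+B^*B)^\dag A^*A$ satisfies $\R(S^{1/2})=\R(A^*)\cap\R(B^*)$). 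Since you intend to cite rather than reprove this, the argument goes through unchanged.
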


A particularly useful point of view on semiclosed operators is given by quotients of bounded operators.

\begin{definition}\label{djd1}
Let $T:\D(T)\to \K,\ \D(T)\subseteq \H$. The operator $T$ is the quotient of two bounded operators if there exists a Hilbert space $\F$ and $A\in\B(\F,\H)$, $B\in \B(\F,\K)$ such that $\R(A)=\D(T)$ and for every $x\in \F$ it holds: $T(Ax) = Bx$. In this case we will say that $T$ is the quotient of $B$ and $A$ and write $T=B/A$.
\end{definition}

Obviously the operators $A$ and $B$ appearing in the previous definition must satisfy $\N(A)\subseteq \N(B)$, and they are not unique. On the other hand, any two operators $A\in \B(\F,\H)$ and $B\in \B(\F,\K)$ satisfying $\N(A)\subseteq \N(B)$ induce a quotient operator $B/A$ between $\H$ and $\K$, with the domain $\D(B/A)=\R(A)$. It is not difficult to see that $T$ is a quotient if and only if $T$ is semiclosed. The definition of quotients we gave here is due to Izumino \cite{Izumino}, but several definitions of quotients can be found in the literature with insignificant differences.

Kaufman proved in \cite{Kaufman2} that every closed operator $T$ can be expressed as a quotient of bounded operators $B/A$ such that $\R(A^*)+\R(B^*)$ is closed (see also \cite{Koliha3}). In fact, it is also true that if $A$ and $B$ are given operators such that $\N(A)\subseteq \N(B)$ and if $B/A$ is a closed operator, then $\R(A^*)+\R(B^*)$ has to be closed. It seems that this implication, although mentioned in the literature (see \cite[Remark 3]{Koliha3}), is not explicitly proved. We will clarify it in the following lemma, for the readers convenience. %  using the construction which is due to Kaufman \cite{Kaufman2}.

\begin{lemma}%[See Remark 3 in \cite{Koliha3}]
\label{djl11}
Let $A\in \B(\F,\H)$, $B\in \B(\F,\K)$ such that $\N(A)\subseteq \N(B)$.
\begin{itemize}
  \item[1.] $B/A$ is bounded if and only if $\R(B^*)\subseteq \R(A^*)$.
  \item[2.] $B/A$ is closable if and only if $(B^*)^{-1}(\R(A^*))$ is dense in $\K$.
  \item[3.] $B/A$ is closed if and only if $\R(A^*)+\R(B^*)$ is closed.
\end{itemize}
\end{lemma}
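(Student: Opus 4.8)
The plan is to analyze the quotient $B/A$ by passing to the adjoint and using the identification of its domain $\D(B/A)=\R(A)$ with the setting where the Moore--Penrose inverse is available. The natural first move is to express $B/A$ directly: since $T=B/A$ acts by $Ax \mapsto Bx$ and this is well-defined precisely because $\N(A)\subseteq\N(B)$, and since $A^\dag A = P_{\N(A)^\bot}$ while $x\mapsto A^\dag(Ax)$ recovers the component of $x$ in $\N(A)^\bot$, one gets $B/A = B A^\dag$ on $\R(A)=\D(BA^\dag)$ — here $A^\dag$ has domain $\R(A)\oplus\R(A)^\bot$, so restricting to $\R(A)$ gives exactly $\D(B/A)$, and $B(A^\dag y)=Bx$ whenever $y=Ax$ because $x - A^\dag y\in\N(A)\subseteq\N(B)$. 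Thus $B/A$ is the restriction of $BA^\dag$ to $\R(A)$. For parts (1) and (2) one can then argue fairly directly. For (1): $B/A$ bounded means $BA^\dag$ is bounded on $\R(A)$; by the closed graph theorem (Lemma \ref{djl8}, part 5, or Lemma \ref{djl9}) this is governed by whether $\R(B)\subseteq\D(A^\dag)=\R(A)\oplus\R(A)^\bot$ — but more usefully, dualize: $B/A$ bounded iff it extends to a bounded everywhere-defined operator $C$ with $CA=B$, which by Douglas' lemma is equivalent to $\R(B^*)\subseteq\R(A^*)$. (One must check the domains match: if $\R(B^*)\subseteq\R(A^*)$ then $C=(A^*)^\dag B^*)^*$ works; conversely a bounded everywhere-defined $C$ with $CA=B$ gives $B^*=A^*C^*$, so $\R(B^*)\subseteq\R(A^*)$, and any bounded $B/A$ extends to such a $C$ since $\R(A)$ is dense in $\cl{\R(A)}$ and $B/A$ vanishes appropriately, or rather extends by continuity and then by zero on $\R(A)^\bot\ominus$... — this needs a small argument that boundedness on $\R(A)$ already forces $\R(B)$ into $\cl{\R(A)}$, equivalently $\N(A^*)\subseteq\N(B^*)$, which is exactly the dual of $\N(A)\subseteq\N(B)$.) Wait — $\N(A)\subseteq\N(B)$ does not dualize to $\N(A^*)\subseteq\N(B^*)$; this is the subtle point and must be handled via Douglas directly rather than by a naive transpose.

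For part (2), closability: by Lemma \ref{djl6} part 2, a densely defined operator is closable iff its adjoint is densely defined. So I would compute $(B/A)^*$. Since $B/A$ need not be densely defined ($\D(B/A)=\R(A)$ is dense in $\cl{\R(A)}$ only), one first restricts attention to $\cl{\R(A)}$ as the ambient space on the domain side — equivalently, note $B/A$ closable as an operator in $\H$ iff closable as an operator from $\cl{\R(A)}$. Then $(B/A)^* $ as an operator from $\K$ to $\cl{\R(A)}$ should be computed as follows: $y\in\D((B/A)^*)$ iff the functional $Ax\mapsto \sk{Bx}{y}$ is bounded on $\R(A)$, i.e. $|\sk{Bx}{y}|\le c\|Ax\|$ for all $x$, i.e. (by Douglas applied to the map $x\mapsto\sk{Bx}{y}=\sk{x}{B^*y}$ versus $x\mapsto\|Ax\|$) $B^*y\in\R(A^*)$. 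Hence $\D((B/A)^*)=(B^*)^{-1}(\R(A^*))$, and closability is equivalent to this being dense in $\K$, which is (2). The Moore--Penrose description also identifies the action: $(B/A)^* = (A^*)^\dag B^*$ (with domain $(B^*)^{-1}(\D((A^*)^\dag))=(B^*)^{-1}(\R(A^*)\oplus\R(A^*)^\bot)$, which contains $(B^*)^{-1}(\R(A^*))$); this is where one invokes that $(A^\dag)^* = (A^*)^\dag$ (Lemma \ref{djl8}, part 4).

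For part (3), the interesting equivalence, I would combine the above. The operator $B/A$ is closed iff $B/A = (B/A)^{**}$, i.e. iff $B/A$ is closable and equals the closure of itself, which (since the closure is the adjoint of the adjoint) amounts to: $B/A$ is closed iff $(B/A)^*$ is closed (automatic, Lemma \ref{djl6}) and $(B/A)^{**}=B/A$, the latter meaning no proper extension is forced. Concretely: $B/A$ is closed iff $\D(B/A)=\D((B/A)^{**})$. Now $(B/A)^{**}$ is itself a quotient: it is closed, so by the Kaufman result $(B/A)^{**}=B'/A'$ with $\R(A'^*)+\R(B'^*)$ closed; but more to the point, applying the domain-of-adjoint computation twice: $\D((B/A)^{**}) = \cl{\R(A)}\ominus\N((B/A)^*)$-completed appropriately... the cleanest route: $(B/A)^* = (A^*)^\dag B^*$ restricted to $(B^*)^{-1}(\R(A^*))$; this is a quotient of the pair $((A^*)^\dag$-related bounded pieces$)$. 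The key computational identity I expect to need is: for the pair $A,B$ with $\N(A)\subseteq\N(B)$, form $[A^*\ B^*]^T:\H\times\K\to\F$ — no, rather $[A^*\ B^*]:\F'\to\H\times\K$... Let me instead use: $\R(A^*)+\R(B^*) = \R\bigl((A^*\,B^*)\bigr)$ where $(A^*\ B^*):\H\times\K\to\F$ sends $(h,k)\mapsto A^*h+B^*k$ — this is the range of a bounded operator, hence closed iff $\R\bigl([A\ B]^T... \bigr)$... The point is $\R(A^*)+\R(B^*)$ closed iff $\R(A^*+B^*\text{-block})$ closed iff (Theorem \ref{06djtD}-style, or directly) the relevant Moore--Penrose inverse $(A^*\ B^*)^\dag$ restricted appropriately is bounded.

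\textbf{The main obstacle.} The hard part will be part (3), and specifically the direction that $B/A$ closed $\Rightarrow$ $\R(A^*)+\R(B^*)$ closed — the direction Koliha et al. left implicit. I would attack it by noting $B/A$ closed forces $(B/A)^* = (A^*)^\dag B^*$ to satisfy $(B/A)^{**}=B/A$, and then showing $\D((B/A)^{**}) = \R(A) \cap (\text{something})$ computed via a second Moore--Penrose inverse, and that $\D((B/A)^{**})=\D(B/A)=\R(A)$ forces a range-closedness statement. Alternatively, and perhaps more cleanly: $B/A$ closed $\iff \Gamma(B/A)$ closed in $\H\times\K$; and $\Gamma(B/A) = \{(Ax,Bx):x\in\F\} = \R([A\ B]^T)$ where $[A\ B]^T:\F\to\H\times\K$. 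So $B/A$ is closed iff $\R([A\ B]^T)$ is closed. By Lemma \ref{djl8} part 3, $\R([A\ B]^T)$ closed iff $([A\ B]^T)^\dag$ bounded iff (taking adjoints, using part 4) $([A^*\ B^*])^\dag$ bounded iff $\R([A^*\ B^*]) = \R(A^*)+\R(B^*)$ closed. That is the whole proof of (3) — compact, and it sidesteps the adjoint gymnastics entirely. The remaining care is just verifying $\Gamma(B/A)=\R([A\ B]^T)$ as sets, which follows immediately from the definition of $B/A$ and $\N(A)\subseteq\N(B)$ (so the graph is well-defined), and that $([A\ B]^T)^* = [A^*\ B^*]$ as a map $\H\times\K\to\F$, which is the standard operator-matrix adjoint recorded in the notation subsection. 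I would present (3) via this graph/Moore--Penrose argument as the main line, and derive (1) and (2) either similarly or via the Douglas/adjoint-domain computations sketched above, whichever the authors prefer for uniformity.
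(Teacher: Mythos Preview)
Your proposal is correct, and for part~3 it lands on the same key identification as the paper: $\Gamma(B/A)=\{(Ax,Bx):x\in\F\}=\R\bigl([A\ B]^T\bigr)$, so closedness of $B/A$ is closedness of this range. The paper then argues by exhibiting an explicit isometry $J:\R(A^*)+\R(B^*)\to\Gamma(B/A)$, $J:x\mapsto(AD^\dag x,BD^\dag x)$ with $D=(A^*A+B^*B)^{1/2}$, whereas you simply observe that $\R(A^*)+\R(B^*)=\R\bigl(([A\ B]^T)^*\bigr)$ and invoke the closed range theorem (derivable here from Lemma~\ref{djl8}, parts~3 and~4). This is a mild streamlining rather than a different approach; your version avoids the explicit construction, while the paper's version makes the correspondence concrete. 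For parts~1 and~2 the paper just cites the literature, and your Douglas/adjoint-domain sketches are the standard arguments behind those citations.
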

\begin{proof}
For {\bf 1.} see \cite[Remark 3]{Koliha3} (and \cite[\S 5]{Izumino}), and for {\bf 2.} see \cite[Lemma 2.3]{Izumino2}.
\\
{\bf 3.} The graph $\Gamma(B/A)=\{(Ax,Bx)\ :\ x\in \F\}=\{(Ax,Bx)\ :\ x\in \N(A)^\bot\}$ and the subspace $\R(A^*)+\R(B^*)$ are isometrically isomorphic, and so the assertion follows. An isometry can be constructed in essentially the same way as in \cite{Kaufman2}: $$J:\R(A^*)+\R(B^*) \to \Gamma(B/A),\quad J: x \mapsto (AD^\dag x, BD^\dag x),$$
where $D=(A^*A + B^*B)^{1/2}$. The case when $\R(A^*)+\R(B^*)$ is closed is covered in \cite[Theorem 1]{Kaufman2} (for a more approachable proof, see \cite[Lemma]{Koliha3}). If $\R(A^*)+\R(B^*)$ is not closed, the proof is again the same as in \cite[Lemma]{Koliha3}.
\end{proof}

From Lemma \ref{djl11} we see that if $A\in \B(\F,\H)$ and $B\in \B(\F,\K)$ satisfy $\N(A)\subseteq \N(B)$, then $\R(A^*)+\R(B^*)$ being closed should imply $(B^*)^{-1}(\R(A^*))$ being dense. A more transparent proof of this implication can be interesting on its own, so we finish this section by presenting it. With the already introduced notation we have:

\begin{lemma}\label{djl12}
If $\N(A)\subseteq \N(B)$ and $\R(A^*)+\R(B^*)$ is closed, then the subspace $(B^*)^{-1}(\R(A^*))$ is dense.
\end{lemma}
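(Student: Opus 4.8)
The plan is to work directly with the subspace $(B^*)^{-1}(\R(A^*))$ and show it is dense by identifying its orthogonal complement with $\{0\}$. Suppose $k\in \K$ is orthogonal to every vector in $(B^*)^{-1}(\R(A^*))$; I want to conclude $k=0$. The key observation is that membership $y\in(B^*)^{-1}(\R(A^*))$ means precisely $B^*y\in\R(A^*)$, i.e. $B^*y = A^*u$ for some $u\in\H$. So $(B^*)^{-1}(\R(A^*))$ is the preimage under $B^*$ of $\R(A^*)\cap\R(B^*)$, and its density is equivalent — since $\N(B^*)\subseteq (B^*)^{-1}(\R(A^*))$ always — to the density of $\R(B^*)\cap (\text{the part of }\R(A^*)\text{ hit by }B^*)$ inside $\cl{\R(B^*)}$. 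The hypothesis that $\R(A^*)+\R(B^*)$ is closed is exactly what should let us upgrade "$\R(A^*)\cap\R(B^*)$ is dense in $\R(B^*)$" — which is false in general — to a usable statement.

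First I would set $E=\R(A^*)+\R(B^*)$, which by hypothesis is a closed subspace, hence a Hilbert space in its own right, and observe that $\R(A^*)$ and $\R(B^*)$ are operator ranges inside $E$; by the open mapping / Douglas-type machinery (or directly: $A^*$ maps onto $\R(A^*)$ which is continuously embedded in $E$) the inclusions $\R(A^*)\hookrightarrow E$ and $\R(B^*)\hookrightarrow E$ are bounded below on the appropriate complements. Then I would use the fact that when the sum of two operator ranges $R_1+R_2$ is closed, one has the algebraic-plus-metric decomposition governing $R_1\cap R_2$: specifically, there is a bounded idempotent-type relation, and $R_1\cap R_2$ is again an operator range whose closure in $E$ is controlled. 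The cleanest route is probably Theorem \ref{06djtD} applied to a pair of closed subspaces built from $E$ together with Lemma \ref{djl7}: inside $E$, consider the closed subspaces obtained as closures of $\R(A^*)$ and $\R(B^*)$; since $\R(A^*)+\R(B^*)=E$ is all of the ambient (closed) space, closedness of this sum forces $\cl{\R(A^*)}_E + \cl{\R(B^*)}_E = E$ as well, and more importantly the projection $P_{\cl{\R(B^*)}}P_{\cl{\R(A^*)}}$ has closed range.

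Next, translate back: take $k\perp (B^*)^{-1}(\R(A^*))$. Since $\N(B^*)\subseteq(B^*)^{-1}(\R(A^*))$ we get $k\in\cl{\R(B^*)}$ (as $\N(B^*)^\perp=\cl{\R(B)}$... careful — rather $k\perp\N(B^*)$ gives $k\in\cl{\R((B^*)^*)}$; I will instead use that $B$ maps $\H\to\K$ so $\N(B^*)^\perp=\cl{\R(B)}\subseteq\K$, placing $k$ there). Then for any $y$ with $B^*y=A^*u$, orthogonality $\sk{k}{y}=0$ holds. The surjectivity-type statement I extracted should guarantee that $\{y: B^*y\in\R(A^*)\}$ together with $\N(B^*)$ spans a dense set precisely because the closed-sum condition makes the map $y\mapsto (\text{class of }B^*y)$ in the quotient $\cl{\R(B^*)}/(\R(A^*)\cap\R(B^*))$ — which is finite-gap / has closed range image — and a bounded-below argument finishes it, yielding $k=0$.

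The main obstacle I anticipate is the passage from "the algebraic sum $\R(A^*)+\R(B^*)$ is closed" to a genuinely metric statement about how $\R(B^*)$ sits relative to $\R(A^*)\cap\R(B^*)$; the intersection of two operator ranges with closed sum is itself an operator range (this is standard, e.g. from \cite{Fillmore}), but I need the slightly sharper fact that the natural map from $\R(B^*)$ (with its own Hilbert-space norm) into the quotient by the intersection has closed range, so that a complement of the intersection in $\R(B^*)$ maps to something closed and bounded-below into $E/\R(A^*)$. Rather than hand-crafting this, the efficient move — and the one I expect the authors take — is to reduce directly to the already-established Lemma \ref{djl11}(3): $\R(A^*)+\R(B^*)$ closed $\iff$ $B/A$ closed; a closed operator is automatically closable; and Lemma \ref{djl11}(2) says $B/A$ closable $\iff$ $(B^*)^{-1}(\R(A^*))$ dense. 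So the whole statement is a two-line corollary of Lemma \ref{djl11}. The point of the present lemma, as the text says, is to give a \emph{self-contained, transparent} proof not routed through the quotient-operator theory, so a referee-pleasing writeup would carry out the direct orthogonal-complement argument sketched above, invoking only Theorem \ref{06djtD}, Lemma \ref{djl7}, and elementary operator-range facts.
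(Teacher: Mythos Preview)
Your proposal has a genuine gap. You correctly observe that the statement is an immediate corollary of Lemma~\ref{djl11} (closed $\Rightarrow$ closable $\Rightarrow$ dense preimage), and you also correctly note that the whole point of Lemma~\ref{djl12} is to give a proof \emph{independent} of that quotient machinery. But your proposed alternative, the ``direct orthogonal-complement argument,'' is never actually carried out: you identify the main obstacle --- passing from closedness of $\R(A^*)+\R(B^*)$ to a usable density statement about $\R(A^*)\cap\R(B^*)$ inside $\R(B^*)$ --- and then explicitly decline to ``hand-craft'' it. The tools you name for the job (Theorem~\ref{06djtD}, Lemma~\ref{djl7}) concern \emph{closed} subspaces; applying them to $\cl{\R(A^*)}$ and $\cl{\R(B^*)}$ discards exactly the operator-range information needed, and your sketch gives no indication of how to recover it.

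The paper's proof supplies precisely the concrete device you are missing: the \emph{parallel sum} $S = B^*B(A^*A+B^*B)^\dag A^*A$, for which $\R(S^{1/2}) = \R(A^*)\cap\R(B^*)$. The closedness hypothesis makes $A^*A+B^*B$ closed-range, so $(A^*A+B^*B)^\dag$ is bounded and acts as a homeomorphism on $\cl{\R(A^*)}$. One then writes $(B^*)^{-1}(\R(A^*)) = \N(B^*)\oplus\R((B^*)^\dag S^{1/2})$ and shows $\R((B^*)^\dag S) = \R(B(A^*A+B^*B)^\dag A^*A)$ is dense in $\cl{\R(B)}$ by pushing the dense set $\R(A^*A)\subseteq\cl{\R(A^*)}$ through the homeomorphism and then through $B$. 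This is a constructive argument using the Moore--Penrose inverse and the parallel sum, not the projection/closed-subspace route you anticipated.
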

\begin{proof}
%Suppose that $\R(A^*)+\R(B^*)$ is closed, which together with $\N(A)\subseteq \N(B)$ gives $\R(A^*)+\R(B^*)=\cl{\R(A^*)}$. Assume, without loss of generality that $\F = \cl{\R(A^*)}$ and $\K=\cl{\R(B)}$. Let $D$ be any operator from $\B(\F)$ such that $\R(D)=\R(A^*)\cap \R(B^*)$ (for instance, $D$ can be the square root of the parallel sum of $A^*A$ and $B^*B$, see \cite{Fillmore}). Consider the operator $T=(B^*)^\dag D$, which is bounded (see Lemma \ref{djl8}). The range of $T$ is exactly $(B^*)^{-1}(\R(A^*))$ and we will prove that it is dense by proving that $T^*$ is injective. But this is clear due to the fact that $T^*$ coincides with $D^*B^\dag$ on a dense set $\D(B^\dag)=\R(B)$, and $D^*B^\dag$ is injective on $\R(B)$, because $\R(B^\dag)\cap \N(D^*)=\cl{\R(B^*)}\cap \R(D)^\bot=\{0\}$. This completes the proof.
Suppose that $\R(A^*)+\R(B^*)$ is closed, which together with $\N(A)\subseteq \N(B)$ gives $\R(A^*)+\R(B^*)=\cl{\R(A^*)}$. In that case the operator $(A^*A + B^*B)^{1/2}$ has a closed range, equal to $\cl{\R(A)}$, and so does the operator $A^*A + B^*B$. Let $S=B^*B(A^*A + B^*B)^\dag A^*A$. The operator $S$ is the parallel sum of $A^*A$ and $B^*B$ (see \cite[\S 4]{Fillmore}) and $\R(S^{1/2})=\R(A^*)\cap \R(B^*)$. Note that, according to Lemma \ref{djl8}, the operator $(B^*)^\dag S^{1/2}$ belongs to $\B(\F,\K)$, and moreover $(B^*)^{-1}(\R(A^*)) = \N(B^*)\oplus \R((B^*)^\dag S^{1/2}).$ So if we prove that $\R((B^*)^{\dag} S^{1/2})$ is dense in $\cl{\R(B)}$, we will finish the proof. Observe that $\R((B^*)^\dag S^{1/2})\supseteq \R((B^*)^{\dag} S) = \R( B(A^*A + B^*B)^{\dag}A^*A)$, furthermore $\R((A^*A + B^*B)^{\dag} A^*A)$ is a dense set in $\cl{\R(A^*)}$, since $(A^*A + B^*B)^{\dag}$ acts as a homeomorphism on $\cl{\R(A^*)}$, while the range of $A^*A$ is dense in $\cl{\R(A^*)}$. Keeping in mind that $B(\cl{\R(A^*)})=\R(B)$, we conclude that $B(\R((A^*A + B^*B)^{\dag} A^*A))$ is dense in $\cl{\R(B)}$, and so is $\R((B^*)^{\dag} S^{1/2})$.
\end{proof}

\section{Continuity properties of $C_{\M,\N}(A,B)$}\label{djsec1}

Let $A,B\in \B(\H,\K)$ be two bounded operators between Hilbert spaces $\H$ and $\K$, and $\M,\N\subseteq \H$ two closed subspaces in $\H$. For $A$, $B$, $\M$, and $\N$, we denote by $C_{\M,\N}(A,B)$ the operator with the domain $\M+\N$ which coincides with $A$ on $\M$, and with $B$ on $\N$ simultaneously. It is clear that $C_{\M,\N}(A,B)$ exists if and only if $A$ and $B$ coincide on $\M\cap \N$. The main purpose of this work is to study continuity properties of the operator $C_{\M,\N}(A,B)$. We are concerned with the following questions.% (abbreviating $C_{\M,\N}(A,B)$ to $C_{\M,\N}$):

\begin{itemize}
  \item Is $C_{\M,\N}(A,B)$ bounded? In other words, does there exist a bounded $C:\H \to \K$ which coincides with $A$ on $\M$ and with $B$ on $\N$?
  \item Is $C_{\M,\N}(A,B)$ closable? In other words, does there exist a closed $C:\D(C)\to \K$, such that $C_{\M,\N}(A,B)\subseteq C$?
  \item Is $C_{\M,\N}(A,B)$ a closed operator?
\end{itemize}

If $\M+\N$ is a closed subspace, all these questions have a trivial positive answer (see also \cite[Proposition 2.1]{Djikic}), given that $C_{\M,\N}(A,B)$ is bounded for any $A$ and $B$ which coincide on $\M\cap \N$. If $\M+\N$ is not closed, we demonstrate by the following example that these questions are meaningful, and that in general the operator $C_{\M,\N}(A,B)$ is not even closable. %(not even for finite rank operators $A$ and $B$). %We will show that even for finite rank operators $A$ and $B$ we can not guarantee that $C_{\M,\N}(A,B)$ is closable.

\begin{example}\label{06djex1}
%Let $T:\H\to \K$ be a bounded operator with a non-closed range and let $y \in \cl{\R(T)}\setminus \R(T)$. In that case there is a sequence $(x_n)\subseteq \H$ such that $Tx_n \to y$. Such a sequence is unbounded: otherwise it would contain a subsequence $(x'_n)$ which converges weakly to some $x$, and so $(Tx'_n)$ would converge weakly to $Tx$, but it also converges weakly to $y$, yielding $y=Tx$.

Let $\K=\ell^2(\mathbb{N})$ and $T_\alpha:\K \to \K$ be defined by $$T_\alpha : (a_1,a_2,a_3,...) \to (\frac{1}{1^\alpha}a_1,\frac{1}{2^\alpha}a_2,\frac{1}{3^\alpha}a_3,...).$$ For every $\alpha>0$ the operator $T_\alpha$ is a self-adjoint, compact, and injective operator, having a dense range $\R(T_\alpha)\not = \cl{\R(T_\alpha)} = \K$. Let $\H=\K\times \K$, $\M=\K \times \{0\}$, and $\N = \Gamma(T_1) = \{(k,T_1k)\ :\ k\in \K\}$. In that case $\M$ and $\N$ are closed, $\M\cap \N=\{0\}$ and $\M\dotplus \N = \K \times \R(T_1)$ is not a closed subspace.

First let us consider $C_{\M,\N}(A,B):\M + \N \to \H$ induced by $A:(x,y)\mapsto (T_\alpha x, 0)$ and $B=0$ for different $\alpha\geq 0$. We will abbreviate $C_{\M,\N}(A,B)$ to $C_{\M,\N}$.  %respectively, and $C:\M + \N\to \H$ is such that it coincides with $A$ on $\M$ and with $B$ on $\N$.  %Moreover we have: $\M^\bot = \{0\} \times \K$ and $\N^\bot = \{(-Tk,k)\ :\ k\in \K\}$.
\\
\\
{\bf 1.} $\alpha = 0$. In this case, $A=I$ on $\M$ and $C_{\M,\N}=P_{\M//\N}$. Therefore, $C_{\M,\N}$ is a closed operator, but $\M+\N$ is not closed, hence $C_{\M,\N}$ is unbounded (see Lemma \ref{djl7}).
\\
\\
{\bf 2.} $0<\alpha<1$. We will show that $C_{\M,\N}$ is closable, but not closed and not bounded. Assume that $((x_n,0))_{n\in \mathbb{N}}\subseteq \M$ and $((y_n,T_1 y_n))_{n\in \mathbb{N}}\subseteq \N$ such that $(x_n,0) + (y_n,T_1y_n) \to 0$ and $C_{\M,\N}((x_n,0) + (y_n,T_1y_n)) \to z=(z_1,z_2)$, for some $z\in \H$. Then we have also $T_1 x_n \to 0$, and from the definition of $C_{\M,\N}$, $z_2=0$ and $T_\alpha x_n \to z_1$. Since $T_1 = T_{1-\alpha}T_\alpha$, we conclude $T_{1-\alpha} z_1 = 0$ and so $z_1=0$, showing that $z$ must be $0$. Hence, $C_{\M,\N}$ is closable.

    To see that $C_{\M,\N}$ is not bounded, pick $\beta$ such that $\frac{1}{2} < \beta < \frac{3}{2}-\alpha$, and let $x_n = (\frac{1}{1^{\beta -1}}, \frac{1}{2^{\beta-1}},...,\frac{1}{n^{\beta-1}},0,0,...)$.
    The sequence $(x_n)_{n\in \mathbb{N}}$ is unbounded in $\ell^2$, as well as the sequence $(T_\alpha x_n)_{n\in \mathbb{N}}$, while $T_1 x_n \to (\frac{1}{n^\beta})_{n\in \mathbb{N}}$. Let $s_n = \|T_\alpha x_n\|^{-1}$  and take a sequence from $\M+\N$ defined as: $(-s_n x_n,0) + (s_n x_n, s_n T_1 x_n)$. Such a sequence converges to $0$ while $C_{\M,\N}(( -s_n x_n,0) + (s_n x_n, s_n T_1 x_n)) = -s_n (T_\alpha x_n,0)$ is unimodular. %

    Finally to see that $C_{\M,\N}$ is not closed, pick $\beta$ such that $\frac{1}{2}-\alpha < \beta \leq \frac{1}{2}$ and let $x_n = (\frac{1}{1^\beta}, \frac{1}{2^\beta},...,\frac{1}{n^\beta},0,0,...)$. We have $(-x_n,0)+(x_n,T_1x_n)\to (0, (\frac{1}{n^{\beta +1}})_{n\in \mathbb{N}})\not \in \M+\N$, while $C_{\M,\N}((-x_n,0)+(x_n,T_1x_n))=-(T_{\alpha}x_n,0)$ also converges, to $((\frac{1}{n^{\alpha+\beta}})_{n\in \mathbb{N}},0)$. Hence $C_{\M,\N}$ is not closed.
\\
\\
{\bf 3.} $\alpha\geq 1$. In this case $C_{\M,\N}$ is bounded. Namely, if $(x_n,0)+(y_n,T_1y_n)$ is a sequence from $\M+\N$ converging to $0$, then $T_1x_n \to 0 $ also, and so we have $C_{\M,\N}((x_n,0)+(y_n,T_1y_n)) = (T_\alpha x_n, 0) = (T_{\alpha - 1} T_1 x_n,0) \to 0$.
\\
\\
Let us show that the transformation $C_{\M,\N}$ can be non-closable even for finite rank $A$ and $B$.
\\
\\
{\bf 4.} Let $y=(\frac{1}{n})_{n\in \mathbb{N}}$ and $x_n = (1,1,...,1,0,0,...)$ the sequence starting with $n$ $1$'s followed by $0$'s. If $A=P_{(y,0)}$ the orthogonal projection onto the subspace spanned by $(y,0)$, and $B=0$, then $C_{\M,\N}$ does not have a closed extension on $\H$. To see this let $s_n = (\sum_{i=1}^{n} \frac{1}{i})^{-1}$ and take $\alpha_n = s_n(x_n,0)\in \M$ and $\beta_n = s_n(-x_n,-T_1x_n) \in \N$. Then $\alpha_n + \beta_n = s_n(0,-T_1x_n) \to 0$. On the other hand, $C_{\M,\N}(\alpha_n + \beta_n) = A \alpha_n = s_n(\sk{(x_n,0)}{(y,0)}_{\H} /\|(y,0)\|^2)\cdot (y,0) = \|y\|^{-2}(y,0)$, thus the sequence $(C_{\M,\N}(\alpha_n + \beta_n))$ converges to $\|y\|^{-2}(y,0)\not = 0$. This shows that $C_{\M,\N}$ does not have a closed extension. \hfill $\diamond$

\end{example}

For the rest of this section we fix $\H$ and $\K$ to be arbitrary Hilbert spaces (not necessarily separable), $\M$ and $\N$ two closed subspaces of $\H$, and $A, B \in \B(\H,\K)$ two bounded operators. We are not assuming that $A$ and $B$ coincide on $\M\cap \N$, and when they do, that is when $C_{\M,\N}(A,B)$ is well-defined, we will abbreviate it to $C_{\M,\N}$. We proceed to give necessary and sufficient conditions for the operator $C_{\M,\N}$ to be well-defined and bounded and for it to be  well-defined and closable. We will postpone giving necessary and sufficient conditions for the operator $C_{\M,\N}$ to be closed until the next section.

\begin{lemma}\label{djl1} Let $Q$ be the projection onto $\M\ominus (\M\cap \N)$ parallel with $\N\oplus (\M+\N)^\bot$. In that case, $Q$ is closed densely defined, and if $C_{\M,\N}$ exists, then for every $\alpha \in \M + \N$: $C_{\M,\N}\alpha = (A-B)Q\alpha + B\alpha$. Consequently, if $\H=\cl{\M+\N}$, then \begin{equation}\label{06djeq3}C_{\M,\N}= (A-B)Q + B.
\end{equation}
\end{lemma}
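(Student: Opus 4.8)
The plan is to identify $Q$ with the adjoint of a bounded projection and use Lemma~\ref{djl7}. Write $\M_0 := \M\ominus(\M\cap\N)$, so that $\M + \N = \M_0 \dotplus \N$ with $\M_0\cap\N=\{0\}$, and note $\M_0\oplus(\M\cap\N)\oplus(\M+\N)^\bot$ and $\N\oplus(\M+\N)^\bot$ are both closed subspaces, while $\M_0 \dotplus \N$ need not be. First I would observe that $Q = P_{\M_0 // (\N\oplus(\M+\N)^\bot)}$ has domain $\M_0\dotplus\N\dotplus(\M+\N)^\bot$, which is dense in $\H$ since its closure contains $\cl{\M+\N}$ and $(\M+\N)^\bot$. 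To see $Q$ is closed, I would apply part~1 of Lemma~\ref{djl7}: both $\M_0$ and $\N\oplus(\M+\N)^\bot$ are closed subspaces of $\H$, hence $Q=P_{\M_0//(\N\oplus(\M+\N)^\bot)}$ is a closed projection. (Alternatively, one exhibits $Q$ as the adjoint of the bounded projection $P_{\N//\M_0}$ restricted appropriately, invoking part~3 of Lemma~\ref{djl7}, but the direct route via part~1 is cleaner.)

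Next I would verify the formula $C_{\M,\N}\alpha = (A-B)Q\alpha + B\alpha$ for $\alpha\in\M+\N$. Every such $\alpha$ decomposes uniquely as $\alpha = m_0 + n$ with $m_0\in\M_0$ and $n\in\N$, so that $Q\alpha = m_0$. On the one hand $(A-B)Q\alpha + B\alpha = (A-B)m_0 + B(m_0+n) = Am_0 + Bn$. On the other hand, since $m_0\in\M_0\subseteq\M$ we have $C_{\M,\N}m_0 = Am_0$, and $C_{\M,\N}n = Bn$; but one must check $C_{\M,\N}$ is genuinely additive here, i.e.\ that writing $\alpha$ also as $\alpha = m + n'$ with $m\in\M$, $n'\in\N$ (a possibly different decomposition) still yields the same value $Am+Bn'$. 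This is exactly the well-definedness of $C_{\M,\N}$: if $m_0+n = m+n'$ then $m-m_0 = n-n' \in \M\cap\N$, and since $A$ and $B$ agree on $\M\cap\N$ we get $Am - Am_0 = A(m-m_0) = B(n-n') = Bn - Bn'$, hence $Am + Bn' = Am_0 + Bn$. So $C_{\M,\N}\alpha = Am_0 + Bn = (A-B)Q\alpha + B\alpha$, as claimed.

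Finally, if $\H = \cl{\M+\N}$, then $(\M+\N)^\bot = \{0\}$, so $Q = P_{\M_0//\N}$ has domain $\M+\N$, which is dense in $\H$; and the displayed identity holds on all of $\D(Q) = \M+\N = \D(C_{\M,\N})$, giving $C_{\M,\N} = (A-B)Q + B$ as operators (equal domains, equal action). I do not expect any serious obstacle: the only slightly delicate point is confirming that $\D(Q)$ is dense, which follows since $\cl{\D(Q)} \supseteq \cl{\M+\N} + (\M+\N)^\bot = \H$ (and in the second part, $(\M+\N)^\bot=\{0\}$ makes this immediate); and checking that $C_{\M,\N}$ behaves linearly across different decompositions, which is precisely the hypothesis that $A$ and $B$ coincide on $\M\cap\N$.
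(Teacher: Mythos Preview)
Your argument is correct and follows essentially the same route as the paper's (very brief) proof: closedness of $Q$ via Lemma~\ref{djl7}(1), density of $\D(Q)=(\M+\N)\oplus(\M+\N)^\bot$, and verification of the formula by rewriting $(A-B)Q+B=AQ+B(I-Q)$ and checking it agrees with $A$ on $\M$ and with $B$ on $\N$. One small slip: in your parenthetical alternative you call $P_{\N//\M_0}$ a \emph{bounded} projection, but it is bounded only when $\M_0\dotplus\N$ is closed, which is precisely the degenerate case; since you do not actually use this route, it does not affect the proof.
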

\begin{proof}
The proof is straightforward if we note  $(A-B)Q+B=AQ + B(I-Q)$, which is exactly the operator equal to $A$ on $\M$ and to $B$ on $\N\oplus(\M+\N)^\bot$. In the case when $\H=\cl{\M+\N}$, the domains of both operators $C_{\M,\N}$ and $Q$ are equal to $\M+\N$, so equality \eqref{06djeq3} holds.
\end{proof}

%The following theorem contains one of our main results.

\begin{theorem}\label{06djt1}

 \begin{itemize}
 \item[1.] There exists a bounded operator $C:\H\to \K$ which coincides with $A$ on $\M$ and with $B$ on $\N$ if and only if $\R(A^* - B^*) \subseteq \M^\bot + \N^\bot$.
 \item[2.] There exists a closed $C:\D(C)\to \K$, $\D(C)\subseteq \H$ such that $\M+\N\subseteq \D(C)$ and that $C$ coincides with $A$ on $\M$ and with $B$ on $\N$ if and only if $(A^*-B^*)^{-1}(\M^\bot + \N^\bot)$ is dense in $\K$.
 \end{itemize}
\end{theorem}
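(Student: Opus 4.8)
The plan is to reduce both statements to facts about the single operator $D := A^* - B^*$ and the closed subspace $\M^\bot + \N^\bot$, using the description of $C_{\M,\N}$ from Lemma \ref{djl1} together with the duality between projections and their adjoints from Lemma \ref{djl7}. Throughout, write $Q = P_{(\M\ominus(\M\cap\N))//(\N\oplus(\M+\N)^\bot)}$ as in Lemma \ref{djl1}; it is closed and densely defined, and by Lemma \ref{djl7}(3) its adjoint $Q^*$ is a closed densely defined projection with $\R(Q^*) = (\N\oplus(\M+\N)^\bot)^\bot = \N^\bot\cap(\M+\N) = \N^\bot\cap\cl{\M+\N}$ and $\N(Q^*) = (\M\ominus(\M\cap\N))^\bot$. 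First I would dispose of the well-definedness issue: $C_{\M,\N}(A,B)$ exists, i.e. $A$ and $B$ agree on $\M\cap\N$, exactly when $(A-B)(\M\cap\N) = \{0\}$, i.e. $\M\cap\N \subseteq \N(A-B) = \R(A^*-B^*)^\bot = \R(D)^\bot$; equivalently $\cl{\R(D)} \subseteq (\M\cap\N)^\bot = \cl{\M^\bot+\N^\bot}$. This containment is automatic under either of the hypotheses $\R(D)\subseteq\M^\bot+\N^\bot$ or $D^{-1}(\M^\bot+\N^\bot)$ dense (in the latter case, $D^{-1}(\M^\bot+\N^\bot)$ dense forces $\cl{\R(D)} = \cl{D(D^{-1}(\M^\bot+\N^\bot))} \subseteq \cl{\M^\bot+\N^\bot}$), so in both directions of both items we may assume $C_{\M,\N}$ is well-defined and invoke Lemma \ref{djl1}.

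For item 1, reduce to the case $\H = \cl{\M+\N}$ (on the orthogonal complement $\M^\bot\cap\N^\bot$ any bounded extension is free to be $0$, and this does not affect the stated range condition since $\M^\bot+\N^\bot$ already contains $\M^\bot\cap\N^\bot$). Then by \eqref{06djeq3}, $C_{\M,\N} = (A-B)Q + B$, and by Lemma \ref{djl9}(1) this is bounded iff $(A-B)Q$ is bounded. Now $(A-B)Q$ is densely defined and closable (it is closed: $Q$ is closed and $A-B$ bounded — apply Lemma \ref{djl9} appropriately, or note its graph is closed), so by Lemma \ref{djl6}(2) it is bounded iff $\D(((A-B)Q)^*) = \K$. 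Using Lemma \ref{djl6}(4), $((A-B)Q)^* \supseteq Q^*(A-B)^* = Q^* D$, and in fact since $Q$ is closed densely defined with $(A-B)$ bounded one gets $((A-B)Q)^* = (Q^*(A^*-B^*)^{**})\ldots$ — here I must be slightly careful: the clean identity is $((A-B)Q)^* = Q^*(A-B)^*$ when the left side is densely defined, which holds because $C_{\M,\N}$ is closable iff... Rather than belabor this, the cleaner route: $\D(((A-B)Q)^*) = \{k : Dk \in \D(Q^*)\} = D^{-1}(\D(Q^*))$. Since $Q^*$ is bounded on $\R(Q^*) = \N^\bot\cap\cl{\M+\N}$ (being a projection restricted to its range is the identity) and $\D(Q^*) = \R(Q^*)\dotplus\N(Q^*)$, boundedness of $C_{\M,\N}$ becomes $D^{-1}(\D(Q^*)) = \K$, i.e. $\R(D)\subseteq\D(Q^*)$. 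The final step is a subspace identity: $\D(Q^*) \cap \cl{\M^\bot+\N^\bot} = \M^\bot+\N^\bot$ and $\R(D)\subseteq\cl{\M^\bot+\N^\bot}$ already, whence $\R(D)\subseteq\D(Q^*) \Leftrightarrow \R(D)\subseteq\M^\bot+\N^\bot$; this is where one must compute $\D(Q^*)$ explicitly from $\R(Q^*)$ and $\N(Q^*)$ above and check it meets $\cl{\M^\bot+\N^\bot}$ in exactly $\M^\bot+\N^\bot$.

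For item 2, the structure is identical but with ``bounded'' replaced by ``closable'': again reduce to $\H=\cl{\M+\N}$, note a closed extension on a larger domain exists iff $C_{\M,\N}$ is closable (take the closure; conversely restrict), so by \eqref{06djeq3} and Lemma \ref{djl9}(1) this holds iff $(A-B)Q$ is closable, iff by Lemma \ref{djl6}(2) its adjoint is densely defined, iff $D^{-1}(\D(Q^*))$ is dense in $\K$, iff — by the same subspace identity $\D(Q^*)\cap\cl{\M^\bot+\N^\bot} = \M^\bot+\N^\bot$ combined with $\cl{\R(D)}\subseteq\cl{\M^\bot+\N^\bot}$ — the set $D^{-1}(\M^\bot+\N^\bot)$ is dense in $\K$. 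The main obstacle I anticipate is the careful handling of adjoints of unbounded compositions: establishing $((A-B)Q)^* = Q^*(A-B)^*$ (rather than merely $\supseteq$) requires knowing $(A-B)Q$ is closed with dense range of the right form, so I would instead sidestep it by computing $\D(((A-B)Q)^*)$ directly from the definition of adjoint as $\{k\in\K : x\mapsto \langle (A-B)Qx, k\rangle$ is $\|\cdot\|_\H$-continuous on $\D(Q)\}$ and identifying this with $D^{-1}(\D(Q^*))$; the secondary obstacle is pinning down $\D(Q^*)$ and verifying the intersection identity with $\cl{\M^\bot+\N^\bot}$, which is really the combinatorics of the four subspaces $\M,\N,\M^\bot,\N^\bot$ and should follow from Lemma \ref{djl7}(3) and the relations recorded after Theorem \ref{06djtD}.
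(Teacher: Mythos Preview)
Your approach is essentially identical to the paper's: reduce via Lemma \ref{djl1} to boundedness/closability of $T=(A-B)Q$, use Lemma \ref{djl6}(2) to convert this into $\D(T^*)=\K$ resp.\ $\D(T^*)$ dense, identify $\D(T^*)=(A^*-B^*)^{-1}(\D(Q^*))$, and then compute $\D(Q^*)$ from Lemma \ref{djl7}(3). Two small points: the identity $((A-B)Q)^*=Q^*(A-B)^*$ is immediate from Lemma \ref{djl6}(4) since $A-B\in\B(\H,\K)$, so no sidestepping is needed; and your parenthetical claim that $(A-B)Q$ is closed is false in general (take $A-B$ vanishing on $\R(Q)$ with $\D(Q)$ not closed) but harmless, since Lemma \ref{djl6}(2) does not require closedness---as for the ``subspace identity'' you flag as an obstacle, the paper simply computes $\D(Q^*)=\N(Q)^\bot\dotplus\R(Q)^\bot=(\M^\bot+\N^\bot)\oplus(\M\cap\N)$ directly, from which your intersection with $\cl{\M^\bot+\N^\bot}=(\M\cap\N)^\bot$ is obvious.
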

\begin{proof}
{\bf 1.} Note that if such $C$ exists then $A$ and $B$ must coincide on $\M\cap \N$, but also if $\R(A^* - B^*) \subseteq \M^\bot + \N^\bot$, then $\M\cap \N \subseteq \N(A-B)$ showing that $A$ and $B$ coincide on $\M\cap \N$. Therefore, we can assume that $A$ and $B$ coincide on $\M\cap \N$, and consider the linear transformation $C_{\M,\N}$ as before. There is a bounded operator $C$ defined on the whole space with the given properties if and only if $C_{\M,\N}$ is bounded. Let us show that $C_{\M,\N}$ is bounded if and only if $\R(A^* - B^*) \subseteq \M^\bot + \N^\bot$.

As in Lemma \ref{djl1}, let $Q$ denote the projection onto $\M\ominus (\M\cap \N)$ parallel with $\N\oplus (\M+\N)^\bot$, so the following equality holds on $\M+\N$: $C_{\M,\N} = (A-B)Q + B$. The operator $C_{\M,\N}$ is bounded if and only if the operator $(A-B)Q+B$ is bounded on $\M+\N$. By Lemma \ref{djl9} this is equivalent to $(A-B)Q$ being bounded on its domain. It remains to prove that this happens if and only if  $\R(A^* - B^*) \subseteq \M^\bot + \N^\bot$.

Let $T=(A-B)Q$. Since $T$ is densely defined, $T$ is bounded if and only if the domain of its adjoint $T^*$ is the whole $\K$, but $T^* = Q^*(A-B)^*$ (Lemma \ref{djl6}), and so $\D(T^*)=\{k\in \K\ :\ (A^*-B^*)k \in \D(Q^*)\}$. The operator $Q^*$ is again a projection with the domain $\D(Q^*)=\R(Q^*)\dotplus \N(Q^*)=\N(Q)^\bot \dotplus \R(Q)^\bot = (\M^\bot + \N^\bot)\oplus (\M^\bot + \N^\bot)^\bot$ (Lemma \ref{djl7}). Given that $A$ and $B$ coincide on $\M\cap \N$, or in other words: $\M\cap \N \subseteq \N(A-B)$, we obtain $\R(A^*-B^*)\bot (\M^\bot+\N^\bot)^\bot$. Therefore, for $k\in \K$, $(A^*-B^*)k \in \D(Q^*)$ if and only if $(A^*-B^*)k \in \M^\bot + \N^\bot$, so we in fact have $\D(T^*) = \{k\in \K\ :\ (A^*-B^*)k \in \M^\bot + \N^\bot\}$. Finally, we conclude that $\D(T^*)=\K$ if and only if $\R(A^* - B^*) \subseteq \M^\bot + \N^\bot$, completing the proof of this assertion.
\\
\\
{\bf 2.} Similarly as in 1. first we note that both conditions imply $A$ and $B$ coinciding on $\M\cap \N$, so we can consider the transformation $C_{\M,\N}$ defined on $\M+\N$. We should in fact prove that $C_{\M,\N}$ is closable iff $\{k \in \K\ :\ (A^*-B^*)k \in \M^\bot + \N^\bot\}$ is dense in $\K$. With the analogous reasoning as in 1. we come to a conclusion that $C_{\M,\N}$ is closable iff $T$ is closable, i.e. iff $\D(T^*)$ is dense in $\K$ (Lemma \ref{djl6}). As we have already explained, $\D(T^*) = \{k \in \K\ :\ (A^*-B^*)k \in \M^\bot + \N^\bot\}$, which proves the assertion.
\end{proof}

\begin{rem}\label{06djr1}
\textbf{1.}\  If $\M+\N$ is a closed subspace, we know that the transformation $C_{\M,\N}(A,B)$ is bounded for any $A$ and $B$ which coincide on $\M\cap \N$. This is also contained in Theorem \ref{06djt1}: if $A$ and $B$ coincide on $\M\cap \N$, then $\N(A-B)\supseteq \M\cap \N$ which leads us to $\R(A^* - B^*)\subseteq \cl{\M^\bot + \N^\bot}$; if $\M+\N$ is closed, then $\cl{\M^\bot + \N^\bot}=\M^\bot + \N^\bot$ (Theorem \ref{06djtD}), so the condition of Theorem \ref{06djt1} is fulfilled.

\textbf{2.}\  The symmetry of the condition in Theorem \ref{06djt1} indicates that $C_{\M,\N}(A,B)$ is bounded if and only if $C_{\M,\N}(B,A)$ is bounded, which can seem as a curious conclusion. However, this can be proved directly as follows. The operator $C_{\M,\N}(A,B)$ is bounded iff for any two sequences $(x_n)_{n\in \mathbb{N}}\subseteq \M$ and $(y_n)_{n\in \mathbb{N}}\subseteq \N$ such that $x_n + y_n \to 0$, the sequence $(Ax_n + By_n)_{n\in \mathbb{N}}$ also converges to $0$. The analogous statement holds for $C_{\M,\N}(B,A)$ as well. Note that, if $x_n+y_n\to 0$, then $(A+B)x_n + (A+B)y_n\to 0$, so $Ax_n + By_n \to 0$ iff $Bx_n + Ay_n \to 0$, i.e. $C_{\M,\N}(A,B)$ is bounded iff $C_{\N,\M}(B,A)$ is bounded.

\textbf{3.}\ The operator $C_{\M,\N}(A,B)$ does not depend on the way $A$ and $B$ act outside $\M$ and $\N$ respectively, in fact $C_{\M,\N}(A,B)=C_{\M,\N}(AP_\M,BP_\N)$. %We can even pose the same problem for operators $A$ and $B$ such that $A\in \B(\M,\K)$ and $B\in \B(\N,\K)$, i.e. such that they are not even defined outside $\M$ and $\N$.
However, it is perhaps not so explicit that the condition $\R(A^*-B^*)\subseteq \M^\bot + \N^\bot$ (or the one for $C_{\M,\N}$ to be closable) is also independent on the way $A$ and $B$ act outside $\M$ and $\N$. Indeed: $(AP_\M)^* - (BP_\N)^* = (A^* - B^*) + (-P_{\M^\bot}A^* + P_{\N^\bot}B^*)$, but $\R(-P_{\M^\bot}A^* + P_{\N^\bot}B^*)\subseteq \M^\bot + \N^\bot$, and so $\R((AP_\M)^* - (BP_\N)^*)\subseteq \M^\bot + \N^\bot$ if and only if $\R(A^*-B^*)\subseteq \M^\bot + \N^\bot$. \hfill $\diamond$%If the operators $A$ and $B$ are defined only on $\M$ and $\N$ from the start, i.e. $A\in \B(\M,\K)$ and $B\in \B(\N,\K)$, we can take arbitrary extensions $A_0$ and $B_0$ to the whole space $\H$ and consider $A_0^* - B_0^*$ in order to get a desired conclusion.
\end{rem}

In the following theorem we prove that two subspaces $\M$ and $\N$ have the simultaneous extension property for any two operators $A$ and $B$ if and only if $\M+\N$ is closed. In fact, we will prove that if $\M+\N$ is not closed, then for any $\K\not = \{0\}$ we can construct operators $A,B\in \B(\H,\K)$ such that $C_{\M,\N}$ is not bounded, which extends  \cite[Proposition 2.1]{Djikic}. Note also that ``bounded" can be changed to ``closable", and so also to ``closed", and the theorem still holds.

\begin{theorem}\label{jt8}
Let $\K\not = \{0\}$. The operator $C_{\M,\N}(A,B)$ is bounded for any  $A, B\in \B(\H, \K)$ coinciding on $\M\cap \N$ if and only if $\M+\N$ is a closed subspace.
\end{theorem}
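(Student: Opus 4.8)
The plan is to prove the two implications separately; the forward one is essentially already on record, while the reverse one reduces to feeding a single rank-one counterexample into Theorem \ref{06djt1}.

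For the forward implication I would simply invoke what was observed in Remark \ref{06djr1}(1): whenever $A$ and $B$ coincide on $\M\cap\N$ one has $\R(A^*-B^*)\subseteq(\M\cap\N)^\bot=\cl{\M^\bot+\N^\bot}$, and if $\M+\N$ is closed then $\M^\bot+\N^\bot$ is closed as well (Theorem \ref{06djtD}), so $\cl{\M^\bot+\N^\bot}=\M^\bot+\N^\bot$ and the boundedness criterion of Theorem \ref{06djt1}(1) is automatically fulfilled.

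For the converse I would argue contrapositively: assuming $\M+\N$ is not closed, I would exhibit $A,B\in\B(\H,\K)$ coinciding on $\M\cap\N$ for which $C_{\M,\N}(A,B)$ is unbounded. By Theorem \ref{06djtD} the subspace $\M^\bot+\N^\bot$ is then a proper subspace of its closure $\cl{\M^\bot+\N^\bot}=(\M\cap\N)^\bot$, so I may pick a vector $v\in(\M\cap\N)^\bot\setminus(\M^\bot+\N^\bot)$; since $\K\neq\{0\}$ I may also fix some $k_0\in\K$ with $k_0\neq 0$ (this is the only place the hypothesis on $\K$ is needed). Then I would take $B=0$ and let $A\in\B(\H,\K)$ be the rank-one operator $Ah=\sk{h}{v}k_0$. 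Because $v\in(\M\cap\N)^\bot$ we have $\M\cap\N\subseteq\N(A)=\N(A-B)$, so $A$ and $B$ coincide on $\M\cap\N$ and $C_{\M,\N}(A,B)$ is well defined; moreover $A^*-B^*=A^*$ is the map $k\mapsto\sk{k}{k_0}v$, whose range is the one-dimensional subspace through $v$, which is not contained in $\M^\bot+\N^\bot$. By Theorem \ref{06djt1}(1) the operator $C_{\M,\N}(A,B)$ is therefore not bounded, which completes the contrapositive argument.

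The same example also gives the strengthening noted before the theorem (``bounded'' may be replaced by ``closable'', and hence by ``closed''): for this pair one computes $(A^*-B^*)^{-1}(\M^\bot+\N^\bot)=\{k\in\K:\sk{k}{k_0}v\in\M^\bot+\N^\bot\}=\{k_0\}^\bot$, a proper closed hyperplane and therefore not dense in $\K$, so Theorem \ref{06djt1}(2) shows $C_{\M,\N}(A,B)$ is not closable, and a fortiori not closed. I do not anticipate any genuine obstacle here, since the argument is a direct application of Theorem \ref{06djt1}; the only point needing a bit of care is the bookkeeping in the translation of the two constraints, namely that ``$A$ and $B$ agree on $\M\cap\N$'' amounts to $\R(A^*-B^*)\subseteq(\M\cap\N)^\bot$, and that every bounded $D:\K\to\H$ with $\R(D)\subseteq(\M\cap\N)^\bot$ arises as such an $A^*-B^*$ (take $A=D^*$, $B=0$), so that the gap between $\M^\bot+\N^\bot$ and its closure --- nonempty precisely when $\M+\N$ fails to be closed --- can be exploited.
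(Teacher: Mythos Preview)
Your proposal is correct and follows essentially the same approach as the paper: both argue the converse contrapositively by picking a vector in $\cl{\M^\bot+\N^\bot}\setminus(\M^\bot+\N^\bot)$, building a rank-one operator from it, and invoking Theorem~\ref{06djt1}. Your construction ($B=0$, $A=\sk{\cdot}{v}k_0$ with $v$ chosen directly in $(\M\cap\N)^\bot$) is in fact slightly cleaner than the paper's, which takes $A=T^*P_\M$, $B=T^*P_{\M\cap\N}$ and then needs the extra observation that $P_\M x\notin\M^\bot+\N^\bot$; but the underlying idea is identical.
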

\begin{proof}
Suppose that $\M+\N$ is not closed. Then $\M^\bot + \N^\bot$ is also not closed (Theorem \ref{06djtD}), so we can pick $x\in \cl{\M^\bot + \N^\bot}\setminus \M^\bot + \N^\bot$. Let $0\not = y\in \K$ and let $T:\K \to \H$ be a rank-one bounded operator mapping $y$ to $x$. Define $A=T^*P_\M$ and $B=T^*P_{\M\cap \N}$. These two operators coincide on $\M\cap \N$, and  $\R(A^*-B^*)=\R(P_\M [(I-P_{\M\cap \N}) T]) = \R(P_\M T)$, which is the one-dimensional subspace spanned by $P_\M x$. Since $P_\M x \not \in \M^\bot + \N^\bot$, we in fact have $(A^*-B^*)^{-1}(\M^\bot + \N^\bot) = \{0\}$ and so, according to Theorem \ref{06djt1}, $C_{\M,\N}(T^*P_\M, T^*P_{\M\cap \N})$ is not bounded. The opposite implication is clear.
\end{proof}

%The implications (i) $\Rightarrow$ (ii) $\Rightarrow$ (iii) are clear, and so are the implications (i) $\Rightarrow$ (iv) and (iv) $\Rightarrow$ (iii).

The subject of the next theorem is analogous to the previous one, just with fixed operators instead of subspaces. If $\H$ is finite--dimensional, then $C_{\M,\N}(A,B)$ is bounded for any feasible $\M$ and $\N$. If $\H$ is infinite--dimensional, then we can  find $\M$ and $\N$ such that $C_{\M,\N}(A,B)$ is not bounded, unless $A=B$.

\begin{theorem}\label{jt3}
Let $A,B\in \B(\H,\K)$ and $\H$ be infinite--dimensional. The operator $C_{\M,\N}(A,B)$ is bounded for any two subspaces $\M$ and $\N$ such that $A$ and $B$ coincide on $\M\cap \N$ if and only if $A=B$. %\textcolor{red}{Ovo nije ta\v cno! Ta\v can je deo dokaza kada je $\N(A-B)$ beskona\v cne kodimenzije, ali ovaj deo sa kona\v cnom nije ta\v can.}
\end{theorem}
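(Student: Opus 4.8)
One direction is immediate: if $A=B$, then for \emph{any} closed subspaces $\M,\N$ the operators $A$ and $B$ trivially coincide on $\M\cap\N$, and $C_{\M,\N}(A,B)$ is nothing but the restriction of $A$ to the subspace $\M+\N$, hence bounded. The plan for the converse is to argue by contraposition: assuming $A\neq B$, I will construct closed subspaces $\M,\N$ such that $A$ and $B$ coincide on $\M\cap\N$ but $C_{\M,\N}(A,B)$ is unbounded.

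Set $D=A-B\neq 0$, so $D^*=A^*-B^*\neq 0$, and fix a nonzero vector $v\in\R(D^*)$. By Theorem \ref{06djt1}(1), once $A$ and $B$ coincide on $\M\cap\N$, the operator $C_{\M,\N}(A,B)$ fails to be bounded as soon as $\R(D^*)\not\subseteq\M^\bot+\N^\bot$. Hence it suffices to produce closed subspaces $\M,\N$ with $\M\cap\N=\{0\}$ (which makes the coincidence condition vacuous) and $v\notin\M^\bot+\N^\bot$. Writing $\mathcal P=\M^\bot$, $\mathcal Q=\N^\bot$ and recalling that $\M\cap\N=(\mathcal P+\mathcal Q)^\bot$, this reduces to the following purely geometric statement, which is the only place where the infinite--dimensionality of $\H$ is used: \emph{for every nonzero $v\in\H$ there exist closed subspaces $\mathcal P,\mathcal Q\subseteq\H$ with $\mathcal P+\mathcal Q$ dense in $\H$ and $v\notin\mathcal P+\mathcal Q$.}

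To prove this I would first pass to a separable piece: pick a separable infinite--dimensional closed subspace $W\ni v$ (available since $\dimm\H=\infty$) and split $W=W_1\oplus W_2$ into two separable infinite--dimensional summands with $v\in W_2$. Then set $\mathcal P=W_1\oplus W^\bot$ and $\mathcal Q=\{\,w+Vw:\ w\in W_1\,\}$ (the graph of $V$, viewed inside $\H$), where $V:W_1\to W_2$ is a bounded injective operator with dense range, still to be chosen. Both $\mathcal P$ and $\mathcal Q$ are closed ($\mathcal Q$ is the graph of a bounded everywhere--defined operator), and a direct computation in the decomposition $W_1\oplus W_2\oplus W^\bot$ yields $\mathcal P+\mathcal Q=W_1\oplus\R(V)\oplus W^\bot$. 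Thus $\mathcal P+\mathcal Q$ is dense in $\H$ since $\R(V)$ is dense in $W_2$, while $v\in\mathcal P+\mathcal Q$ if and only if $v\in\R(V)$ (recall $v$ lies entirely in $W_2$). So the whole matter comes down to producing a bounded injective $V:W_1\to W_2$ with dense range and $v\notin\R(V)$. Here I would fix orthonormal bases $\{a_n\}$ of $W_1$ and $\{b_n\}$ of $W_2$ arranged so that the coordinates of $v$ decay only like $1/n$, say $|\sk{v}{b_n}|=c\|v\|/n$ with $c$ a normalizing constant (such a basis $\{b_n\}$ exists because a unitary of $W_2$ can carry $v/\|v\|$ onto any prescribed unit vector), and take $V$ to be the diagonal operator $Va_n=\tfrac1n b_n$. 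Then $V$ is bounded, compact, injective, and has dense range; and since $\R(V)$ consists exactly of the vectors of $W_2$ whose $b_n$--coordinates remain square--summable after multiplication by $n$, while $\sum_n|\sk{v}{b_n}|^2 n^2=c^2\|v\|^2\sum_n 1=\infty$, we conclude $v\notin\R(V)$.

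Putting the pieces together: with $\M=\mathcal P^\bot$, $\N=\mathcal Q^\bot$ we get $\M\cap\N=(\mathcal P+\mathcal Q)^\bot=\{0\}$, so $A$ and $B$ coincide on $\M\cap\N$ trivially, whereas $v\in\R(D^*)$ and $v\notin\M^\bot+\N^\bot=\mathcal P+\mathcal Q$, so $\R(D^*)\not\subseteq\M^\bot+\N^\bot$ and Theorem \ref{06djt1}(1) forces $C_{\M,\N}(A,B)$ to be unbounded. The main obstacle — indeed essentially the only nonroutine point — is the geometric statement, and within it the construction of $V$ with dense range yet $v\notin\R(V)$; the device of matching the decay rate $1/n$ of the diagonal weights against a vector $v$ whose coordinates also decay like $1/n$, so that the relevant weighted sum just barely diverges, is what makes it go through, and this is exactly where $\dimm\H=\infty$ is essential, since in a finite--dimensional space every vector lies in the range of every injective operator.
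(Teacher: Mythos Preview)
Your proof is correct and follows essentially the same strategy as the paper's: reduce via Theorem~\ref{06djt1} to producing closed subspaces $\mathcal P,\mathcal Q$ with dense sum omitting a prescribed nonzero vector, then build such a pair from a graph construction in a separable piece together with a unitary. The only cosmetic difference is that the paper first takes any pair $\U',\V'$ with dense non-closed sum and then applies a global isometry of $\H$ to move a missing point onto the target $x$, whereas you bake the target $v$ into the construction from the start by choosing the basis of $W_2$ so that $v$ has $1/n$ coordinates; your treatment of the non-separable case (adjoining $W^\bot$ to $\mathcal P$) is in fact more explicit than the paper's.
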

\begin{proof}
%We will consider only the case when $C_{\M,\N}(A,B)$ is bounded. The case when it is closable is considered analogously, and the case when it is closed follows from the latter one.

%If $\codim\ \N(A-B)<\infty$, then for any two subspaces $\M$ and $\N$ such that $A$ and $B$ coincide on $\M\cap \N$ we in fact have $\codim\ \M\cap \N < \infty$ and so $\M+\N$ is closed, thus $C_{\M,\N}(A,B)$ is certainly bounded.

Suppose that $A-B\not = 0$ and let $0\not = x\in \R(A^* - B^*)$. Let $\U,\V\subseteq \H$ be two closed subspaces such that $\U+\V \not = \cl{\U + \V} = \H$ and $x\not \in \U+\V$. Such subspaces certainly exist: first of all in any infinite--dimensional Hilbert space, and so also in $\H$, there are two closed subspaces $\U'$ and $\V'$ such that $\U'+\V'\not = \cl{\U' + \V'} = \H$ (see Example \ref{06djex1} for a separable Hilbert space); if $x\not \in \U'+\V'$ take $\U=\U'$, $\V=\V'$, and we are done; otherwise, pick $x'\not \in \U'+\V'$ such that $\|x\|=\|x'\|$ and let $\U=\varphi(\U')$ and $\V=\varphi(\V')$, where $\varphi:\H\to \H$ is an isometric isomorphism such that $\varphi(x')=x$. If we put $\M=\U^\bot$ and $\N=\V^\bot$, then $\M\cap \N=\{0\}$ and so $A$ and $B$ coincide here, but $\R(A^*-B^*)\not \subseteq \M^\bot + \N^\bot$, thus $C_{\M,\N}(A,B)$ is not bounded.  %Now suppose that $\codim\ \N(A-B)$ is infinite, and so both $\H$ and $\K$ must be infinite--dimensional. Let us first assume that $B=0$ and $\N(A-B)=\N(A)=\{0\}$. Take an arbitrary $0\not = x\in \R(A^*)$ and let $\U,\V\subseteq \H$ be two closed subspaces such that $\U+\V \not = \cl{\U + \V} = \H$ and $x\not \in \U+\V$. Such subspaces certainly exist: first of all in any separable, and so also in any infinite--dimensional Hilbert space $\H$, there are two closed subspaces $\U'$ and $\V'$ such that $\U'+\V'\not = \cl{\U' + \V'} = \H$; if $x\not \in \U'+\V'$ take $\U=\U'$, $\V=\V'$, and we are done; otherwise, pick $x'\not \in \U'+\V'$ such that $\|x\|=\|x'\|$ and let $\U=\varphi(\U')$ and $\V=\varphi(\V')$, where $\varphi$ is an isometric isomorphism $\varphi :\H\to \H$ such that $\varphi(x')=x$. If we put $\M=\U^\bot$ and $\N=\V^\bot$, then $\M\cap \N=\{0\}$ and so $A$ and $B$ coincide here, but $\R(A^*-B^*)\not \subseteq \M^\bot + \N^\bot$, thus $C_{\M,\N}(A,B)$ is not bounded.
%In general, let $D$ be the restriction of $A-B$ on $\N(A-B)^\bot$. According to the part of the proof above, we can find two closed subspaces $\M$ and $\N$ in the Hilbert space $\N(A-B)^\bot$ such that $C_{\M,\N}(D,0)$ is not bounded. But then, taking $\M$ and $\N$ as subspaces of $\H$, we get that $C_{\M,\N}(A-B,0)$ is not bounded, that is $C_{\M,\N}(A,B)$ is not bounded.
\end{proof}

% (\textcolor{red}{Zavisi li ovo nekako od kodomena, tj. \v sta ako je $\K$ jednodimenzionalan, jel ovo i dalje na snazi?})

%\begin{theorem}\label{djt3}
%Let $D=AP_{\M} - BP_{\N}$. If $\M\cap \N=\{0\}$ then $C_{\M,\N}(A,B)$ is bounded if and only if:
%%$$\sup \{ \frac{\| D(m+n)\|^2}{1-|\sk{m}{n}|}\ :\ m\in \M, n\in \N, \|m\|=\|n\|=1\} < +\infty .$$
%$$\sup_{\substack{m\in \M,\ n\in \N \\ \|m\|=\|n\|=1}}\ \frac{\| D(m+n)\|^2}{1-|\sk{m}{n}|} < +\infty.$$
%\end{theorem}

\section{$C_{\M,\N}(A,B)$ as a semiclosed operator}

We begin this section with a simple proof that $C_{\M,\N}(A,B)$ is a semiclosed operator. The construction of a new semiclosed operator from two such operators described in the following lemma seems to be new.

\begin{lemma}\label{djl2}
Let $\H$ and $\K$ be Hilbert spaces and $T_1,T_2\in \SC(\H,\K)$ be such that $T_1$ and $T_2$ coincide on $\D(T_1)\cap \D(T_2)$. Then the operator $T:\D(T_1)+\D(T_2)\to \K$ coinciding with $T_1$ on $\D(T_1)$ and with $T_2$ on $\D(T_2)$ is a semiclosed operator. Consequently, $C_{\M,\N}(A,B)$ is a semiclosed operator for any two closed subspaces $\M,\N\subseteq \H$ and any two operators $A,B\in \B(\H,\K)$ which coincide on $\M\cap \N$.
\end{lemma}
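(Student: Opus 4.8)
The plan is to exhibit each operator as a quotient of bounded operators and then combine the two quotient representations on a common Hilbert space so that the result is manifestly again a quotient (hence semiclosed, by the equivalence of semiclosed operators and quotients noted after Definition \ref{djd1}). First I would write $T_1 = B_1/A_1$ and $T_2 = B_2/A_2$ with $A_i \in \B(\F_i, \H)$, $B_i \in \B(\F_i, \K)$, $\R(A_i) = \D(T_i)$, $\N(A_i) \subseteq \N(B_i)$, for suitable auxiliary Hilbert spaces $\F_1, \F_2$; such representations exist precisely because $T_1, T_2$ are semiclosed. Now form $\F = \F_1 \times \F_2$ and define $A \in \B(\F, \H)$ by $A(f_1, f_2) = A_1 f_1 + A_2 f_2$, i.e. $A = [A_1\ A_2]$, and $B \in \B(\F, \K)$ by $B(f_1, f_2) = B_1 f_1 + B_2 f_2$, i.e. $B = [B_1\ B_2]$. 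Then $\R(A) = \R(A_1) + \R(A_2) = \D(T_1) + \D(T_2) = \D(T)$, which is the correct domain.

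Next I would check that $B/A$ is well-defined and equals $T$. The crucial point is that $\N(A) \subseteq \N(B)$: if $A_1 f_1 + A_2 f_2 = 0$, set $x := A_1 f_1 = -A_2 f_2 \in \D(T_1) \cap \D(T_2)$; then $B_1 f_1 = T_1 x$ and $B_2 f_2 = T_2 x$ by the quotient relations, and since $T_1$ and $T_2$ coincide on $\D(T_1) \cap \D(T_2)$ we get $B_1 f_1 + B_2 f_2 = T_1 x + T_2(-x) \cdot(-1)$—more carefully, $B_2 f_2 = T_2(-x) = -T_2 x = -T_1 x = -B_1 f_1$, so $B(f_1, f_2) = 0$. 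Hence $\N(A) \subseteq \N(B)$, so the quotient $B/A$ is a legitimate operator with domain $\R(A) = \D(T)$. The same computation, applied to a general $(f_1, f_2)$ with $A(f_1,f_2) = x$, shows $B/A$ is exactly the operator sending $x = A_1 f_1 + A_2 f_2$ to $B_1 f_1 + B_2 f_2 = T_1(A_1 f_1) + T_2(A_2 f_2)$; one must observe this value does not depend on the decomposition of $x$ (again using that $T_1, T_2$ agree on the overlap and $\N(A)\subseteq\N(B)$), and that it coincides with $T_1 x$ when $x \in \D(T_1)$ (take $f_2 = 0$) and with $T_2 x$ when $x \in \D(T_2)$ (take $f_1 = 0$). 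Thus $B/A = T$, so $T$ is a quotient of bounded operators, hence semiclosed.

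For the consequence, $A \in \B(\H,\K)$ restricted to the closed subspace $\M$ is bounded with closed domain, hence semiclosed (its graph $\Gamma(A|_\M) = \{(m, Am) : m \in \M\}$ is closed, a fortiori an operator range), and likewise $B|_\N \in \SC(\H,\K)$; since $A$ and $B$ coincide on $\M \cap \N$ by hypothesis, the first part applies with $T_1 = A|_\M$, $T_2 = B|_\N$, giving that $C_{\M,\N}(A,B)$, whose domain is $\M + \N = \D(T_1) + \D(T_2)$, is semiclosed.

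I do not anticipate a genuine obstacle here; the one point requiring care is the well-definedness argument, namely verifying $\N(A) \subseteq \N(B)$ and thereby that $T = B/A$ is unambiguous and really restricts to $T_1$ and $T_2$. That is where the hypothesis that $T_1$ and $T_2$ agree on $\D(T_1) \cap \D(T_2)$ is used, and it should be presented carefully; the rest is bookkeeping with $2 \times 1$ operator matrices of the kind introduced in Section \ref{uvod}.
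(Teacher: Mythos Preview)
Your argument is correct, but the paper's proof is considerably shorter and takes a different route. The paper simply observes that $\Gamma(T)=\Gamma(T_1)+\Gamma(T_2)$ (which is exactly where the hypothesis that $T_1$ and $T_2$ agree on $\D(T_1)\cap\D(T_2)$ enters), and then invokes the fact from Section~\ref{uvod} that the sum of two semiclosed subspaces of $\H\times\K$ is again semiclosed. That is the whole proof; the consequence for $C_{\M,\N}(A,B)$ is handled just as you do.

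Your approach via explicit quotient representations $T_i=B_i/A_i$ and the block operators $A=[A_1\ A_2]$, $B=[B_1\ B_2]$ is in fact an unpacking of the same idea: the graph $\Gamma(T_i)$ is the range of the column $[A_i\ B_i]^T:\F_i\to\H\times\K$, and your $[A\ B]^T:\F_1\times\F_2\to\H\times\K$ has range $\Gamma(T_1)+\Gamma(T_2)$. So the two proofs are equivalent under the hood, but the paper's formulation buys brevity by appealing directly to the closure of operator ranges under sums, while yours buys an explicit quotient representation $T=B/A$ of the glued operator, which could be useful if one later wants to apply Lemma~\ref{djl11} to $T$.
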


\begin{proof}
Observe that $\Gamma(T)=\Gamma(T_1)+\Gamma(T_2)$, and since the sum of two semiclosed subspaces in $\H\times \K$ is again a semiclosed subspace in $\H\times \K$, the operator $T$ is also semiclosed. The part of the statement regarding $C_{\M,\N}(A,B)$ follows directly from the observation that $A|_\M, B|_\N \in \SC(\H,\K)$.
\end{proof}

%\textcolor{red}{Da li je konstrukcija iz prethodne leme dovoljna da, ako pretpostavimo da je $\mathcal{A}$ najmanja klasa koja sadr\v zi sve zatvorene operatore i takva da je zatvorena za operaciju iz ove leme, da onda obavezno mora $\mathcal{A}=\SC$?}

%The proof of the following proposition is straightforward by Lemma \ref{djl2}.
%
%\begin{prop}\label{djp1}
%Let $\H$ and $\K$ be Hilbert spaces, $A,B\in \B(\H,\K)$, and $\M,\N$ two closed subspaces of $\H$. If $A$ and $B$ coincide on $\M\cap \N$ then the operator $C_{\M,\N}(A,B)$ is semiclosed.
%\end{prop}
%one of the natural candidates for the quotient of operators $B$ and $A$ is the operator $BA^\dag$
In order to express $C_{\M,\N}(A,B)$ in a quotient-like form, we will first explain a natural connection between the quotients as introduced in Definition \ref{djd1} and the Moore-Penrose generalized inverse. Namely, several definitions of quotients appear in the literature (see \cite{Koliha3}), but we prefer using the form $BA^\dag$. All the appearing forms (including ours) have insignificant technical differences (operators $B/A$ and $BA^\dag$ are essentially the same, except for $BA^\dag$ being defined for any $A$ and $B$ and having a dense domain). However, the use of the Moore-Penrose inverse can lead a bit further, considering the  majority of accessible formulas regarding this inverse. Using a recent result we will in fact see that Lemma \ref{djl1}  has already given us a quotient form for $C_{\M,\N}(A,B)$. First  we gather some properties of $BA^\dag$.

% Motivated by this conclusion on one hand, and by the lack of results for the unbounded case of the Moore-Penrose inverse on the other, we derive some new results along these lines.

\begin{lemma}\label{djl3}
Let $\H,\K$ and $\F$ be Hilbert spaces and $A\in \B(\F,\H)$, $B\in\B(\F,\K)$.
\begin{itemize}
\item[1.] $BA^\dag \in \SC(\H,\K)$.
\item[2.] If $\N(A)\subseteq \N(B)$ then $BA^\dag = B/A$ on $\R(A)$.
\item[3.] $BA^\dag = (BP_{\N(A)^\bot})A^\dag$, the quotient $BP_{\N(A)^\bot} / A$ is well-defined, and the following equality holds $BA^\dag = BP_{\N(A)^\bot} / A$ on $\R(A)$.
\item[4.] $BA^\dag$ is bounded if and only if $ P_{\N(A)^\bot}(\R(B^*))\subseteq \R(A^*)$; $BA^\dag$ is closable if and only if $(P_{\N(A)^\bot}B^*)^{-1}(\R(A^*))$ is dense in $\K$; $BA^\dag$ is closed if and only if $\R(A^*) + P_{\N(A)^\bot}(\R(B^*))$ is closed.
\item[5.] $BA^\dag=\B/ \A$, where $\B$ and $\A$ are the following bounded operators: $\A = [A_0\ J]:\N(A)^\bot \times \R(A)^\bot\to \H$ and $\B = [B_0\ 0] : \N(A)^\bot \times \R(A)^\bot \to \K$, where $A_0$ and $B_0$ are the restrictions of $A$ and $B$ to $\N(A)^\bot$ respectively, while $J:\R(A)^\bot \to \H$ is the inclusion map.
\end{itemize}
\end{lemma}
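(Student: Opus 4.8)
The plan is to reduce every assertion to the facts about quotients of bounded operators already recorded in Lemmas~\ref{djl10} and~\ref{djl11}, using the Moore--Penrose identities from Lemma~\ref{djl8} as the dictionary between $BA^\dag$ and a genuine quotient. Assertion \textbf{1.} is essentially a one-liner: by Lemma~\ref{djl8} the operator $A^\dag$ is closed, hence semiclosed (a closed graph is an operator range, being the range of the orthogonal projection onto it), and $B\in\B(\F,\K)\subseteq\SC(\F,\K)$, so $BA^\dag\in\SC(\H,\K)$ by the composition statement in Lemma~\ref{djl10}.

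For \textbf{2.} and \textbf{3.}, the one computation that does all the work is $A^\dag(Ay)=A^\dag Ay=P_{\N(A)^\bot}y$ for $y\in\F$ (Lemma~\ref{djl8}), so that $BA^\dag(Ay)=BP_{\N(A)^\bot}y$ on $\R(A)$. Since $A^\dag$ takes values in $\R(A^\dag)=\N(A)^\bot$, we moreover have $P_{\N(A)^\bot}A^\dag=A^\dag$ and hence $BA^\dag=(BP_{\N(A)^\bot})A^\dag$ identically; the inclusion $\N(A)\subseteq\N(BP_{\N(A)^\bot})$ is obvious, so $BP_{\N(A)^\bot}/A$ is well-defined and agrees with $BA^\dag$ on $\R(A)$, which is \textbf{3.} If in addition $\N(A)\subseteq\N(B)$, then for every $y\in\F$ the vector $y-P_{\N(A)^\bot}y$ lies in $\N(A)\subseteq\N(B)$, so $BP_{\N(A)^\bot}y=By$ and therefore $BA^\dag(Ay)=By=(B/A)(Ay)$, which is \textbf{2.}

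For \textbf{4.}, I would first observe that $\D(BA^\dag)=\D(A^\dag)=\R(A)\oplus\R(A)^\bot$, that $BA^\dag$ vanishes on the closed subspace $\R(A)^\bot=\N(A^\dag)$, and that the restriction of $BA^\dag$ to $\R(A)$ is, by \textbf{3.}, the quotient $BP_{\N(A)^\bot}/A$. Lemma~\ref{djl9}(2) then says that $BA^\dag$ is bounded (closed, closable) if and only if $BP_{\N(A)^\bot}/A$ is. Now I would apply Lemma~\ref{djl11} to the pair $(A,BP_{\N(A)^\bot})$, which satisfies the standing hypothesis $\N(A)\subseteq\N(BP_{\N(A)^\bot})$, and rewrite the resulting conditions using $(BP_{\N(A)^\bot})^*=P_{\N(A)^\bot}B^*$, hence $\R((BP_{\N(A)^\bot})^*)=P_{\N(A)^\bot}(\R(B^*))$. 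This turns the three criteria of Lemma~\ref{djl11} into exactly the three stated equivalences.

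For \textbf{5.}, I would check the three requirements of Definition~\ref{djd1} directly. First, $\R(\A)=\R(A_0)+\R(J)=\R(A)+\R(A)^\bot$, and this sum is orthogonal, so $\R(\A)=\R(A)\oplus\R(A)^\bot=\D(A^\dag)=\D(BA^\dag)$. Second, $\A$ is injective: if $A_0u+Jv=0$ with $u\in\N(A)^\bot$ and $v\in\R(A)^\bot$, then $Au\in\R(A)$ and $v\in\R(A)^\bot$ force $Au=0$ and $v=0$, and $u\in\N(A)^\bot$ then gives $u=0$; in particular $\N(\A)=\{0\}\subseteq\N(\B)$. Third, for $(u,v)\in\N(A)^\bot\times\R(A)^\bot$ one has $A^\dag(\A(u,v))=A^\dag(Au+v)=P_{\N(A)^\bot}u+0=u$ by Lemma~\ref{djl8}, whence $BA^\dag(\A(u,v))=Bu=\B(u,v)$; thus $\B/\A=BA^\dag$. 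There is no deep obstacle in any of this; the one point I expect to need care is the reduction in \textbf{4.}, namely keeping the domain $\R(A)\oplus\R(A)^\bot$ of $BA^\dag$ separate from the domain $\R(A)$ of the quotient and recognizing that the extra orthogonal summand $\R(A)^\bot$ is harmless (because $BA^\dag$ is identically zero there), so that Lemma~\ref{djl9}(2) genuinely applies; a secondary mild subtlety is checking $\R(P_{\N(A)^\bot}B^*)=P_{\N(A)^\bot}(\R(B^*))$ and making sure no spurious closures enter the closability and closedness conditions.
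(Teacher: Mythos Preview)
Your proposal is correct and follows essentially the same route as the paper: semiclosedness via Lemma~\ref{djl10}, the identity $A^\dag A=P_{\N(A)^\bot}$ to identify $BA^\dag$ with $BP_{\N(A)^\bot}/A$ on $\R(A)$, the reduction in \textbf{4.} via Lemma~\ref{djl9}(2) followed by Lemma~\ref{djl11}, and a direct verification of Definition~\ref{djd1} in \textbf{5.} Your treatment is slightly more explicit in a few places (e.g.\ checking injectivity of $\A$ and the domain bookkeeping in \textbf{4.}), but there is no substantive difference.
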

\begin{proof}
{\bf 1.} Both $B$ and $A^\dag$ are semiclosed operators, so the operator $BA^\dag$ is also semiclosed, according to Lemma \ref{djl10}.
\\
\\
{\bf 2.} This is straightforward.
\\
\\
{\bf 3.} The equality $BA^\dag = (BP_{\N(A)^\bot})A^\dag$ follows from $\R(A^\dag)=\N(A)^\bot$. For the rest of the statement apply statement 2. to operators $BP_{\N(A)^\bot}$ and $A$.
\\
\\
{\bf 4.} The operator $BA^\dag$ is bounded (closable, closed) if and only if its restriction to $\R(A)$ is a bounded (closable, closed) operator, see Lemma \ref{djl9}. Therefore, using statement 3. we obtain that $BA^\dag$ has the desired property if and only if $ BP_{\N(A)^\bot} / A$ has such property. The assertion now follows from Lemma \ref{djl11}.
\\
\\
{\bf 5.} The quotient $\B/\A$ is well-defined because $\N(\A)$ is trivial, and regarding the domain we have $\D(\B/\A)=\R(A)\oplus \R(A)^\bot = \D(BA^\dag)$. Let $x\in \F$ and $h\in \R(A)^\bot$ be arbitrary. On one hand we have $BA^\dag(Ax + h) = BP_{\N(A)^\bot} x$, and on the other $(\B/\A) (Ax + h) = (\B/\A )(\A (P_{\N(A)^\bot} x, h)) = BP_{\N(A)^\bot} x$. Hence, $\B/\A = BA^\dag$.
\end{proof}

In Lemma \ref{djl1} we showed the following equality on $\M+\N$: $C_{\M,\N}(A,B) = (A-B)Q + B$, where $Q$ is the projection onto $M\ominus(\M \cap \N)$ parallel with $\N\oplus (\M+\N)^\bot$. The projection $Q$ can in fact be expressed as the Moore-Penrose inverse of a product of two orthogonal projections $P_{\N^\bot}P_\M$. The following is a result due to Corach and Maestripieri \cite{Corach3}, and we only include the proof of the part regarding range and null-space for convenience.

\begin{theorem}[See \cite{Corach3}]\label{06djt2} If $\M$ and $\N$ are closed subspaces of a Hilbert space $\H$, then $T=( P_{\N^\bot} P_\M)^\dag$ is a closed densely defined projection with the domain $\D(T)=(\M+\N)\oplus (\M+\N)^\bot$, and the range and null-space $\R(T)=\M \ominus (\M\cap \N)$ and $\N(T)=\N \oplus (\M + \N)^\bot$.
\end{theorem}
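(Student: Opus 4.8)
The plan is to establish the claimed description of $T=(P_{\N^\bot}P_\M)^\dag$ by exploiting general facts about the Moore-Penrose inverse of a product of projections together with Theorem \ref{06djtD}. First I would set $P=P_\M$, $R=P_{\N^\bot}$, and $S=RP=P_{\N^\bot}P_\M$, and compute the basic subspaces attached to $S$. Using the identity $\R(RP)=\R(R)\cap(\N(R)+\R(P))=\N^\bot\cap(\N+\M)$ from \eqref{ObrisiLabelu?} — or rather its orthogonal-projection analogue — and the well-known fact that $\N^\bot\cap(\M+\N)=\M\ominus(\M\cap\N)$ modulo closure issues, I would identify $\cl{\R(S)}$, and similarly $\N(S)=\N(P)\dotplus(\R(P)\cap\N(R))=\M^\bot\dotplus(\M\cap\N)$, which is closed and whose orthogonal complement is $\M\ominus(\M\cap\N)$. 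Then by Lemma \ref{djl8}, $T=S^\dag$ has $\R(T)=\N(S)^\bot=\M\ominus(\M\cap\N)$, $\N(T)=\R(S)^\bot=\N(S^*)$, and $\D(T)=\R(S)\oplus\R(S)^\bot$; identifying $\R(S)^\bot=\N(S^*)$ with $\N\oplus(\M+\N)^\bot$ will give the null-space and, together with $\cl{\R(S)}=\cl{\M+\N}\ominus(\M+\N)^\bot$, the domain $\D(T)=(\M+\N)\oplus(\M+\N)^\bot$ once we know $\R(S)$ coincides with $\M\ominus(\M\cap\N)$ as a set (not merely up to closure).

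The key computational steps, in order, are: (i) show $\R(S)=\M\ominus(\M\cap\N)$ exactly — the inclusion $\subseteq$ is immediate since $S=P_{\N^\bot}P_\M$ lands in $\N^\bot$ and in $\cl{\M}=\M$, hence in $\N^\bot\cap\M\subseteq\M\ominus(\M\cap\N)$ after noting $\M\cap\N\subseteq\N$; for $\supseteq$, given $m\in\M\ominus(\M\cap\N)$ I would produce a preimage by writing $m=P_{\N^\bot}m+P_\N m$ and checking $P_\N m\in\M\cap\N$ (this uses $m\perp\M\cap\N$ more carefully), so $m=P_{\N^\bot}m=P_{\N^\bot}P_\M m=Sm$; (ii) deduce $S^*=P_\M P_{\N^\bot}$ has range $\M\cap\N^{\perp\perp}$-type subspace, but more directly compute $\N(S^*)=\R(S)^\bot$ and show it equals $\N\oplus(\M+\N)^\bot$ using $\R(S)=\M\ominus(\M\cap\N)$ and the orthogonal decomposition $\H=(\M\cap\N)\oplus(\M\ominus(\M\cap\N))\oplus\N^{\perp}\cap(\text{rest})$ — cleanly, $(\M\ominus(\M\cap\N))^\bot=(\M^\bot)\oplus(\M\cap\N)$, and intersecting appropriately with the closure of $\M+\N$ gives $\N\oplus(\M+\N)^\bot$; (iii) assemble $\D(T)=\R(S)\oplus\R(S)^\bot$: since $\R(S)=\M\ominus(\M\cap\N)$ and $\cl{\R(S)}=(\M+\N)\ominus(\M+\N)^\bot$ — note $\cl{\M\ominus(\M\cap\N)}=\cl{\M+\N}\ominus\N$ need not hold, so I must instead use that $\R(S)^\bot=\N\oplus(\M+\N)^\bot$ from (ii), whence $\D(T)=\R(S)\oplus\R(S)^\bot=(\M\ominus(\M\cap\N))\oplus\N\oplus(\M+\N)^\bot=(\M+\N)\oplus(\M+\N)^\bot$, the last equality because $\M+\N=(\M\ominus(\M\cap\N))\dotplus\N$ is an orthogonal decomposition of $\M\ominus(\M\cap\N)$ against $\N$ precisely when $\M\ominus(\M\cap\N)\perp\N$ — which is false in general, so here I would instead argue $(\M\ominus(\M\cap\N))+\N=\M+\N$ as sets (easy) and that the $\oplus$ with $\R(S)^\bot$ is genuine by orthogonality of $\R(S)$ and $\R(S)^\bot$).

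Finally, to see $T$ is a \emph{projection}: since $T=S^\dag$ and $S=P_{\N^\bot}P_\M$ is the product of two orthogonal projections, one checks $T^2=T$ directly on $\R(S)\oplus\R(S)^\bot$; on $\R(S)^\bot$ both sides are $0$, and on $\R(S)=\M\ominus(\M\cap\N)$ we have for $m\in\M\ominus(\M\cap\N)$ that $Sm=m$ (from step (i)), so $Tm=S^\dag m=S_0^{-1}m$ where $S_0$ is the restriction of $S$ to $\N(S)^\bot=\R(T)=\M\ominus(\M\cap\N)$; but $S_0$ is the identity on this subspace by step (i), hence $Tm=m$ and $T^2m=Tm$. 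Closedness and dense definedness are then free from Lemma \ref{djl8}.1 and the fact that $\D(T)\supseteq\M+\N$ is dense in $\cl{\M+\N}$ and contains $(\M+\N)^\bot$, so it is dense in $\H$.

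The main obstacle I anticipate is step (i) — proving $\R(P_{\N^\bot}P_\M)=\M\ominus(\M\cap\N)$ with equality of sets rather than merely of closures. The inclusion $\subseteq$ is elementary, but the reverse inclusion requires showing that every $m\in\M\ominus(\M\cap\N)$ already lies in the range, i.e. that $P_\N m\in\M\cap\N$ whenever $m\in\M$ and $m\perp(\M\cap\N)$. This is where the hypothesis is used most delicately: one must verify $P_\N m\in\M$, which does \emph{not} follow from $m\in\M$ alone in general — so I expect the actual argument either invokes a nontrivial identity from \cite{Corach3} (e.g. a formula for $(P_{\N^\bot}P_\M)^\dag$ in terms of $P_\M,P_\N$ and their products) or reduces to the two-subspace canonical decomposition (Halmos), splitting $\H$ into the part where $\M,\N$ are "in generic position" and the parts where they are comparable. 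On the generic-position block the computation is a $2\times2$ matrix exercise with a diagonal "angle operator", and the closedness of $\R(S)$ on that block is exactly the content of Theorem \ref{06djtD}(iii) once $\M+\N$ is closed — but in the non-closed case $\R(S)$ is genuinely non-closed, yet still equals $\M\ominus(\M\cap\N)$ as a set, which is the subtle point. I would therefore structure the proof around the decomposition $\R(S)=\M\cap\N^\bot$ plus a careful verification that $\M\cap\N^\bot=\M\ominus(\M\cap\N)$, the latter being a standard lattice identity for closed subspaces, and then read off everything else from Lemma \ref{djl8}.
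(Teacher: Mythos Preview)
Your step (i) is wrong, and everything downstream breaks because of it. You claim $\R(S)=\M\ominus(\M\cap\N)$ for $S=P_{\N^\bot}P_\M$, but this is false in general. The identity \eqref{ObrisiLabelu?} that you yourself invoke gives $\R(S)=\N^\bot\cap(\M+\N)$, and that subspace is \emph{not} $\M\ominus(\M\cap\N)$: in $\mathbb{R}^2$ with $\M=\mathrm{span}\,(1,0)$ and $\N=\mathrm{span}\,(1,1)$ one has $\R(S)=\N^\bot=\mathrm{span}\,(1,-1)$ while $\M\ominus(\M\cap\N)=\M$. Your ``immediate'' inclusion $\R(S)\subseteq\M$ fails because $P_{\N^\bot}$ does not preserve $\M$; your ``standard lattice identity'' $\M\cap\N^\bot=\M\ominus(\M\cap\N)$ is also false (same example). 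What you computed correctly in the first paragraph is $\R(T)=\N(S)^\bot=\M\ominus(\M\cap\N)$; you are conflating $\R(S)$ with $\R(S^\dag)$.

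This error propagates. In (iii) you need $\R(S)\oplus\R(S)^\bot$, and with the correct $\R(S)=\N^\bot\cap(\M+\N)$ and $\R(S)^\bot=\N\oplus(\M+\N)^\bot$ the sum $[\N^\bot\cap(\M+\N)]\oplus\N$ is a genuine orthogonal decomposition equal to $\M+\N$ --- exactly what the paper does. Your version $(\M\ominus(\M\cap\N))\oplus\N$ is not orthogonal, as you yourself notice. Your idempotency argument also collapses: $S$ does \emph{not} act as the identity on $\M\ominus(\M\cap\N)$ (for $m\in\M$ one has $Sm=P_{\N^\bot}m$, not $m$), so ``$S_0$ is the identity'' is false. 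The paper does not attempt a direct proof of $T^2=T$ either; it cites \cite[Theorem~6.2]{Corach3} for that, and only verifies the domain, range and null-space from Lemma~\ref{djl8} and \eqref{ObrisiLabelu?}.
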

\begin{proof}
The fact that $T$ is a projection follows from \cite[Theorem 6.2]{Corach3}. If $S=P_{\N^\bot} P_\M$, since $T=S^\dag$, $\D(T)=\R(S)\oplus \R(S)^\bot$ and $\R(T)=\cl{\R(S^*)}=\N(S)^\bot$. Given that $S$ is the product of projections, we have $\R(S)=\N^\bot \cap (\M + \N)$, $\R(S)^\bot=\N(S^*)=\N \oplus (\N^\bot\cap \M^\bot) = \N\oplus (\M+\N)^\bot$, and so $\D(T)=[\N^\bot \cap (\M+\N)]\oplus \N \oplus (\M+\N)^\bot=(\M+\N)\oplus(\M+\N)^\bot$. Furthermore, $\R(T)=\N(S)^\bot = (\M^\bot \oplus (\M \cap \N))^\bot = \M \cap (\M\cap \N)^\bot$, and $\N(T)=\R(S)^\bot = \N\oplus (\M^\bot \cap \N^\bot)$. Thus: $\R(T)=\M \ominus (\M\cap \N)$ and $\N(T)=\N\oplus(\M + \N)^\bot$.
\end{proof}

From Lemma \ref{djl1} and Theorem \ref{06djt2} we obtain the following quotient-like expression for $C_{\M,\N}(A,B)$. Observe that the Moore-Penrose inverse $(\proj)^\dag$ is bounded if and only if $\M+\N$ is closed. This is suggested by its domain in Theorem \ref{06djt2}, or by looking at the range of $\proj$ and Theorem \ref{06djtD}.

\begin{lemma}\label{djl4}
Let $\H$ and $\K$ be Hilbert spaces, $A,B\in \B(\H,\K)$, and $\M,\N$ two closed subspaces of $\H$. If $A$ and $B$ coincide on $\M\cap \N$ then $C_{\M,\N}(A,B) = (A-B)(\proj)^\dag + B$ on $\M+\N$.
\end{lemma}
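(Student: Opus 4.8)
The plan is to combine Lemma~\ref{djl1} with Theorem~\ref{06djt2} in the most direct way possible. Lemma~\ref{djl1} tells us that whenever $C_{\M,\N}(A,B)$ exists, we have $C_{\M,\N}(A,B)\alpha = (A-B)Q\alpha + B\alpha$ for every $\alpha\in\M+\N$, where $Q$ is the projection onto $\M\ominus(\M\cap\N)$ parallel with $\N\oplus(\M+\N)^\bot$. Theorem~\ref{06djt2} identifies exactly this projection: it says $T=(\proj)^\dag$ is a (closed, densely defined) projection with $\R(T)=\M\ominus(\M\cap\N)$ and $\N(T)=\N\oplus(\M+\N)^\bot$. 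A projection is uniquely determined by its range and null-space, so $T=Q$ as operators, and in particular they have the same domain $\D(T)=\D(Q)=(\M+\N)\oplus(\M+\N)^\bot$.

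First I would invoke Lemma~\ref{djl1} to get the pointwise identity $C_{\M,\N}(A,B)\alpha=(A-B)Q\alpha+B\alpha$ on $\M+\N$. Then I would note that both $Q$ and $(\proj)^\dag$ are projections and, by Theorem~\ref{06djt2}, share the same range $\M\ominus(\M\cap\N)$ and the same null-space $\N\oplus(\M+\N)^\bot$; hence $Q=(\proj)^\dag$. Substituting gives $C_{\M,\N}(A,B)\alpha=(A-B)(\proj)^\dag\alpha+B\alpha$ for every $\alpha\in\M+\N$. Finally I would observe that the right-hand side, restricted to $\M+\N$, is precisely the operator written as $(A-B)(\proj)^\dag+B$ on $\M+\N$: the domain of $(\proj)^\dag$ contains $\M+\N$ (in fact $\D((\proj)^\dag)=(\M+\N)\oplus(\M+\N)^\bot\supseteq\M+\N$), and $B$ is everywhere defined, so the composite operator has domain at least $\M+\N$ and the stated equality holds there.

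There is really no substantial obstacle here — the lemma is essentially a bookkeeping corollary of the two previously established results. The only point requiring a word of care is the identification $Q=(\proj)^\dag$: this rests on the elementary fact that a projection (in the sense defined in the paper, i.e. $\R(T)\subseteq\D(T)$ and $T^2x=Tx$ with $\D(T)=\R(T)\dotplus\N(T)$) is completely determined by the ordered pair $(\R(T),\N(T))$, which is immediate from $\D(T)=\R(T)\dotplus\N(T)$ and acting componentwise. One should also be slightly attentive to the phrase "on $\M+\N$'' in the statement: it signals that we are asserting equality of the two operators only after restricting the (larger-domain) operator $(A-B)(\proj)^\dag+B$ to $\M+\N$, which matches the convention set up in the Notation subsection, so no genuine difficulty arises.
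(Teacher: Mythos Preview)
Your proposal is correct and matches the paper's approach exactly: the paper presents this lemma without proof, simply as the combination of Lemma~\ref{djl1} and Theorem~\ref{06djt2}. Your only addition is making explicit the (trivial) identification $Q=(\proj)^\dag$ via the fact that a projection is determined by its range and null-space, which is precisely the intended reading.
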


Combining Lemma \ref{djl4} and Lemma \ref{djl3} we can find necessary and sufficient conditions for the operator $C_{\M,\N}(A,B)$ to be closed. We begin with a geometrical lemma.

\begin{lemma}\label{djl5} Let $\H$ be a Hilbert space and $S,R\subseteq \H$ two subspaces of $\H$ such that $\cl{R}\subseteq \cl{S}$. Let $\V\subseteq \H$ be a closed subspace of $\H$ such that $\V^\bot \subseteq S$. Then $S+R$ is a closed subspace if and only if $\V\cap S + P_{\V\cap \cl{S}} (R)$ is closed.
\end{lemma}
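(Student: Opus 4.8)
The statement is purely about closedness of sums of subspaces, so the natural strategy is to translate everything into orthogonal projections and apply Theorem~\ref{06djtD}, which says a sum of two closed subspaces is closed iff the range of the product of the corresponding orthogonal projections is closed. The first step is to reduce to the case of closed subspaces: since $\cl{R}\subseteq \cl{S}$, we have $S+R$ dense in $\cl{S}$, and more importantly $S+R$ is closed if and only if $S+R=\cl{S}$ (a subspace between $S$ and $\cl{S}$ is closed exactly when it equals $\cl{S}$, because any closed subspace containing $S$ contains $\cl{S}$). The same remark applies to the right-hand side: $\V\cap S + P_{\V\cap\cl{S}}(R)$ sits between $\V\cap S$ and its closure, which (using $\cl{R}\subseteq\cl{S}$ and continuity of $P_{\V\cap\cl{S}}$) is $\V\cap\cl{S}$. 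So the claim becomes: $S+R=\cl{S}$ iff $\V\cap S + P_{\V\cap\cl{S}}(R)=\V\cap\cl{S}$.

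\textbf{Key geometric step.} The hypothesis $\V^\bot\subseteq S$ is what makes the equivalence work. Because $\V^\bot\subseteq S$, we can decompose $\cl{S}=(\V^\bot)\oplus(\V\cap\cl{S})$ orthogonally (noting $\V^\bot\subseteq \cl{S}$ and $\cl{S}\ominus\V^\bot = \cl{S}\cap\V$). I would write $S = \V^\bot \oplus (S\cap\V)$ as well — this needs a small argument: given $s\in S$, write $s=P_{\V^\bot}s + P_\V s$; the first summand lies in $\V^\bot\subseteq S$, hence $P_\V s = s - P_{\V^\bot}s\in S$, and of course $P_\V s\in\V$, so $P_\V s\in S\cap\V$. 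Thus $S = \V^\bot\oplus(S\cap\V)$ and $\cl S = \V^\bot \oplus (\V\cap\cl S)$. Now apply $P_{\V\cap\cl S}$ (equivalently $P_\V$ restricted to $\cl S$, since it kills $\V^\bot$) to the identity $S+R$: we get $P_{\V\cap\cl S}(S+R) = (S\cap\V) + P_{\V\cap\cl S}(R)$, using that $P_{\V\cap\cl S}(S) = S\cap\V$ from the decomposition just established. Symmetrically, adding back $\V^\bot$: $S+R = \V^\bot + (S+R)$ (since $\V^\bot\subseteq S$), and $\V^\bot + \big[(S\cap\V)+P_{\V\cap\cl S}(R)\big]$ recovers $S+R$ precisely because $R\subseteq \cl S = \V^\bot\oplus(\V\cap\cl S)$ means $R\subseteq \V^\bot + P_{\V\cap\cl S}(R)$.

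\textbf{Conclusion.} These two relations give a bijective-type correspondence: $S+R=\cl S$ forces $P_{\V\cap\cl S}(S+R)=P_{\V\cap\cl S}(\cl S)=\V\cap\cl S$, i.e. $(S\cap\V)+P_{\V\cap\cl S}(R)=\V\cap\cl S$; conversely if $(S\cap\V)+P_{\V\cap\cl S}(R)=\V\cap\cl S$, then adding $\V^\bot$ to both sides and using $S+R = \V^\bot+(S\cap\V)+P_{\V\cap\cl S}(R)$ yields $S+R = \V^\bot\oplus(\V\cap\cl S)=\cl S$. Combining with the first-step reductions, $S+R$ closed $\iff$ $S+R=\cl S$ $\iff$ $(S\cap\V)+P_{\V\cap\cl S}(R)=\V\cap\cl S$ $\iff$ $(S\cap\V)+P_{\V\cap\cl S}(R)$ closed, which is the claim.

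\textbf{Main obstacle.} The routine-looking but genuinely delicate point is establishing the decomposition $S=\V^\bot\oplus(S\cap\V)$ and, correspondingly, that $P_{\V\cap\cl S}(S)=S\cap\V$ (equality, not just inclusion) — this is exactly where $\V^\bot\subseteq S$ is used, and it is the hinge of the whole argument. The other subtle point is verifying that the closure of $\V\cap S + P_{\V\cap\cl S}(R)$ is $\V\cap\cl S$ rather than something smaller, for which one needs that $P_{\V\cap\cl S}(R)$ is dense in $P_{\V\cap\cl S}(\cl S)=\V\cap\cl S$; this follows from $\cl R\subseteq\cl S$, density of $R$ in $\cl R$... actually one should note $R$ need not be dense in $\cl S$, so instead argue directly that $\cl S = \cl{S+R}$ (as $\cl R\subseteq\cl S$ and $S\subseteq\cl S$), hence applying the continuous map $P_{\V\cap\cl S}$ and taking closures gives $\cl{(S\cap\V)+P_{\V\cap\cl S}(R)} = \V\cap\cl S$. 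Once these are in place the rest is bookkeeping with the orthogonal splitting.
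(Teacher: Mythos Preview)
Your proof is correct and follows essentially the same route as the paper's: both hinge on the decompositions $S=\V^\bot\oplus(\V\cap S)$ and $\cl{S}=\V^\bot\oplus(\V\cap\cl{S})$ (valid precisely because $\V^\bot\subseteq S$), and on reducing each side of the equivalence to the statement that the respective sum equals its closure. Your packaging via the single identity $S+R=\V^\bot\oplus\big[(\V\cap S)+P_{\V\cap\cl{S}}(R)\big]$ is a bit tidier than the paper's direction-by-direction element chase, and your continuity argument for $\cl{(\V\cap S)+P_{\V\cap\cl{S}}(R)}=\V\cap\cl{S}$ is self-contained where the paper invokes an external lemma for $\cl{\V\cap S}=\V\cap\cl{S}$.
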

\begin{proof}
Assume that $S+R$ is closed. In that case $\cl{S}\subseteq \cl{S+R} = S+R \subseteq \cl{S}$, yielding $S+R=\cl{S}$. Let us show that $\V \cap S + P_{\V\cap \cl{S}}(R) = \V\cap \cl{S}$ which is a closed subspace. The inclusion $\subseteq$ is clear. Let $x\in \V \cap \cl{S}$ be arbitrary, and since $x\in \cl{S}$ we can write it as $x=s_1 + r$ where $s_1\in S$ and $r\in R$. According to $S=\V^\bot \oplus (\V\cap S)$, there are $v_1\in \V^\bot$ and $v_2 \in \V\cap S$ satisfying $s_1 = v_1 + v_2$, and therefore $x=v_1+v_2+r$. Now, since $x, v_2 \in \V\cap \cl{S}$ and $v_1\bot \V\cap\cl{S}$, from $x=v_1+v_2+r$ we conclude that $x-v_2$ is exactly $P_{\V\cap \cl{S}} r$, hence $x = v_2 + P_{\V\cap \cl{S}} r \in \V \cap S + P_{\V\cap \cl{S}}(R)$. This proves that the opposite inclusion also holds.

Now assume that  $\V\cap S + P_{\V\cap \cl{S}} (R)$ is a closed subspace. From $\V^\bot \subseteq S$ we derive $\cl{\V\cap S} = \V\cap \cl{S}$ (see for instance \cite[Lemma 2.2]{Djikic2}), and so $\V\cap S + P_{\V\cap \cl{S}} (R)$ being closed implies $\V\cap S + P_{\V\cap \cl{S}} (R) = \V\cap \cl{S}$. Let us prove that $S+R=\cl{S}$, where we only need to show that $\cl{S}\subseteq S+R$. The subspace $\cl{S}$ is equal to $\V^\bot \oplus (\V\cap \cl{S}) = \V^\bot \oplus ( \V\cap S + P_{\V\cap \cl{S}} (R))$, so if we prove that $ P_{\V\cap \cl{S}} (R)\subseteq S+R$ the assertion follows. Note that $(\V\cap \cl{S})^\bot =  \V^\bot \oplus S^\bot$ and for arbitrary $r\in R$ we have $r- P_{\V\cap \cl{S}}r$ is both in $\cl{S}$ and in $\V^\bot \oplus S^\bot$. Hence, $r- P_{\V\cap \cl{S}}r \in \V^\bot$, showing that $P_{\V\cap \cl{S}}r \in S+R$. Accordingly, $P_{\V\cap \cl{S}} (R)\subseteq S+R$, and so $S+R=\cl{S}$.
 \end{proof}

 \begin{theorem}\label{djt1} Let $A,B\in \B(\H,\K)$ and $\M,\N$ two closed subspaces of $H$. There exists a closed operator $C$ with the domain $\D(C)=\M+\N$ such that $C$ coincides with $A$ on $\M$ and with $B$ on $\N$ if and only if $\R(A^*-B^*)+\M^\bot + \N^\bot =\cl{\M^\bot + \N^\bot}$.
 \end{theorem}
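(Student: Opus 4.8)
The strategy is to combine Lemma~\ref{djl4}, which gives $C_{\M,\N}(A,B) = (A-B)(\proj)^\dag + B$ on $\M+\N$, with statement~4 of Lemma~\ref{djl3}, which characterizes closedness of a quotient $BA^\dag$. By Lemma~\ref{djl9}.1 (adding a bounded operator does not affect closedness), $C_{\M,\N}(A,B)$ is closed if and only if $(A-B)(\proj)^\dag$ is closed. Writing $S=\proj$ and applying Lemma~\ref{djl3}.4 to the operators $S\in\B(\H)$ and $A-B\in\B(\H,\K)$, closedness of $(A-B)S^\dag$ is equivalent to $\R(S^*) + P_{\N(S)^\bot}(\R((A-B)^*))$ being closed. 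Now use Theorem~\ref{06djt2}: $S=\proj$ has $\R(S)=\N^\bot\cap(\M+\N)$, $\N(S)=\M^\bot\oplus(\M\cap\N)$, so $\N(S)^\bot = \R(S^*) = \cl{\R(S^*)} = \M\ominus(\M\cap\N)$. Hence the closedness condition becomes: $(\M\ominus(\M\cap\N)) + P_{\M\ominus(\M\cap\N)}(\R(A^*-B^*))$ is closed.

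The remaining work is purely geometric: translate this last condition into $\R(A^*-B^*)+\M^\bot+\N^\bot = \cl{\M^\bot+\N^\bot}$. This is where Lemma~\ref{djl5} comes in. I would apply Lemma~\ref{djl5} with $S := \M^\bot+\N^\bot$ and $R := \R(A^*-B^*)$ and $\V := \M\cap\N$. One checks the hypotheses: $\V^\bot = (\M\cap\N)^\bot = \cl{\M^\bot+\N^\bot} \supseteq \M^\bot+\N^\bot = S$ --- wait, we need $\V^\bot\subseteq S$, so instead take $\V := (\M\cap\N)$ only after noting $\V^\bot = \cl{S}$, which is not $\subseteq S$ in general. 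Let me reconsider: the correct choice is $\V$ with $\V^\bot\subseteq S=\M^\bot+\N^\bot$; since $A$ and $B$ coincide on $\M\cap\N$ we have $\M\cap\N\subseteq\N(A-B)$, hence $\R(A^*-B^*)\subseteq\cl{\R(A^*-B^*)}\subseteq(\M\cap\N)^\bot=\cl{S}$, giving the hypothesis $\cl R\subseteq\cl S$. For $\V$ one takes a closed subspace with $\V^\bot\subseteq\M^\bot+\N^\bot$ and $\V\cap\cl S = \M\ominus(\M\cap\N)$; the natural candidate is $\V = \M\ominus(\M\cap\N) \oplus (\M+\N)^\bot$, equivalently $\V = \M^\bot{}^\bot$-type complement --- concretely $\V^\bot = \M^\bot + (\M\cap\N)$, which indeed lies in $\M^\bot+\N^\bot$ since $\M\cap\N\subseteq\N\subseteq\N^{\bot\bot}$... hmm, $\M\cap\N\subseteq\N$, not $\N^\bot$. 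So instead $\V^\bot$ should be chosen inside $\M^\bot+\N^\bot$ with $\V\cap\cl{S} = \M\ominus(\M\cap\N)$. Since $\cl S=(\M\cap\N)^\bot$, and $\M\ominus(\M\cap\N) = \M\cap(\M\cap\N)^\bot$, taking $\V=\M$ works: $\V^\bot=\M^\bot\subseteq\M^\bot+\N^\bot=S$, and $\V\cap\cl S = \M\cap(\M\cap\N)^\bot = \M\ominus(\M\cap\N)$. Then $\V\cap S = \M\cap(\M^\bot+\N^\bot)$ --- but Lemma~\ref{djl5} wants $\V\cap S + P_{\V\cap\cl S}(R)$, which with $\V=\M$ reads $\M\cap(\M^\bot+\N^\bot) + P_{\M\ominus(\M\cap\N)}(\R(A^*-B^*))$, not quite matching. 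The cleaner route is to invoke Lemma~\ref{djl5} in the other direction, or to first reduce using $\R(A^*-B^*)\perp(\M\cap\N)$ so that $P_{\M\ominus(\M\cap\N)}\R(A^*-B^*) = P_\M\R(A^*-B^*)$, and similarly $P_{\N^\bot}$ handling; I would spell out $P_{\M\ominus(\M\cap\N)}$ as $P_\M - P_{\M\cap\N}$ acting on $\R(A^*-B^*)\subseteq(\M\cap\N)^\bot$, so it equals $P_\M$ there.

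The \textbf{main obstacle} is precisely this geometric translation: showing that
\[
(\M\ominus(\M\cap\N)) + P_{\M\ominus(\M\cap\N)}\bigl(\R(A^*-B^*)\bigr)\ \text{is closed}
\quad\Longleftrightarrow\quad
\R(A^*-B^*) + \M^\bot+\N^\bot = \cl{\M^\bot+\N^\bot}.
\]
My plan is to verify the hypotheses of Lemma~\ref{djl5} with $S=\M^\bot+\N^\bot$, $R=\R(A^*-B^*)$ --- namely $\cl R\subseteq\cl S$ (from $\M\cap\N\subseteq\N(A-B)$) --- and with $\V$ a suitable closed subspace satisfying $\V^\bot\subseteq S$ and $\V\cap\cl S=\M\ominus(\M\cap\N)$; the identity $\cl S=(\M\cap\N)^\bot$ together with $\M\ominus(\M\cap\N)=\M\cap(\M\cap\N)^\bot$ forces $\V=\M$ as the correct choice, after which $\V\cap S=\M\cap(\M^\bot+\N^\bot)$ and one must still reconcile this with $\M\ominus(\M\cap\N)$; the key observation closing the gap is that $\M^\bot\subseteq\M^\bot+\N^\bot$ contributes nothing inside $\M$, so $\M\cap(\M^\bot+\N^\bot) = \M\cap(\M\cap\N^\bot+\M^\bot) $ reduces, modulo $\M^\bot$, to $\M\cap\N^\bot = \M\ominus(\M\cap\N)$ --- more carefully, since the statement of Lemma~\ref{djl5} already absorbs the $\V\cap S$ versus $\V\cap\cl S$ distinction correctly, I expect the substitution to go through once these identifications are made explicit. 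Then Lemma~\ref{djl5} directly yields the equivalence, finishing the proof.
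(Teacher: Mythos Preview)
Your overall strategy is exactly the paper's: reduce via Lemma~\ref{djl4} and Lemma~\ref{djl9} to closedness of $(A-B)(\proj)^\dag$, apply Lemma~\ref{djl3}.4, and then translate geometrically via Lemma~\ref{djl5} with $\V=\M$, $S=\M^\bot+\N^\bot$, $R=\R(A^*-B^*)$. However, there is a genuine error in the middle that is the source of the ``mismatch'' you later struggle with.

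You write $\N(S)^\bot = \R(S^*) = \cl{\R(S^*)} = \M\ominus(\M\cap\N)$ for $S=\proj$. The equality $\R(S^*)=\cl{\R(S^*)}$ is false in general: $\R(S^*)=\R(P_\M P_{\N^\bot})$ is closed precisely when $\M+\N$ is closed (Theorem~\ref{06djtD}), which is the trivial case. The correct computation, using the range formula for a product of projections, is
\[
\R(S^*)=\R(P_\M P_{\N^\bot})=\M\cap(\M^\bot+\N^\bot),\qquad \N(S)^\bot=\cl{\R(S^*)}=\M\ominus(\M\cap\N).
\]
So Lemma~\ref{djl3}.4 gives: $(A-B)S^\dag$ is closed if and only if
\[
\M\cap(\M^\bot+\N^\bot)\;+\;P_{\M\ominus(\M\cap\N)}\bigl(\R(A^*-B^*)\bigr)\ \text{is closed}.
\]
Notice that your erroneous version, $(\M\ominus(\M\cap\N)) + P_{\M\ominus(\M\cap\N)}(\R(A^*-B^*))$, equals $\M\ominus(\M\cap\N)$ and is \emph{always} closed, so it cannot be the right criterion.

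With the corrected expression, your eventual choice $\V=\M$ in Lemma~\ref{djl5} is exactly right and there is no mismatch: $\V^\bot=\M^\bot\subseteq\M^\bot+\N^\bot$, $\cl S=(\M\cap\N)^\bot$ so $\V\cap\cl S=\M\ominus(\M\cap\N)$, and $\V\cap S=\M\cap(\M^\bot+\N^\bot)$. The hypothesis $\cl R\subseteq\cl S$ follows from $\M\cap\N\subseteq\N(A-B)$. Lemma~\ref{djl5} then gives the equivalence with $\R(A^*-B^*)+\M^\bot+\N^\bot$ being closed, i.e.\ equal to $\cl{\M^\bot+\N^\bot}$. This is precisely the paper's argument; your confusion about $\V\cap S$ ``not matching'' was caused solely by the miscomputation of $\R(S^*)$.
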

\begin{proof}
Both of the statements imply that $A$ and $B$ coincide on $\M\cap \N$, or equivalently $\cl{R(A^*-B^*)}\subseteq \cl{\M^\bot + \N^\bot}$, so we can assume that this condition is satisfied by $A$ and $B$ and consider the operator $C_{\M,\N}(A,B)$. We should in fact prove that $C_{\M,\N}(A,B)$ is closed if and only if $\R(A^*-B^*)+\M^\bot + \N^\bot$ is closed.

From Lemma \ref{djl4} we see that $C_{\M,\N}(A,B)$ is closed if and only if the restriction of $(A-B)(\proj)^\bot + B$ to $\M+\N$ is a closed operator. From Lemma \ref{djl9} we can conclude that this happens if and only if $(A-B)(\proj)^\bot$ is a closed operator which is according to Lemma \ref{djl3} equivalent to $\M\cap(\M^\bot + \N^\bot) + P_{\M\cap\cl{\M^\bot + \N^\bot}}( \R(A^*-B^*))$ being closed. If we apply the result of Lemma \ref{djl5} denoting by $S=\M^\bot + \N^\bot$, $R=\R(A^*-B^*)$, and $\V=\M$, we obtain that $\M\cap(\M^\bot + \N^\bot) + P_{\M\cap\cl{\M^\bot + \N^\bot}} (\R(A^*-B^*))$ is closed if and only if $\M^\bot + \N^\bot + \R(A^*-B^*)$ is closed, which completes the proof.
\end{proof}

We summarize by answering the questions from the beginning of Section \ref{djsec1}. If operators $A$ and $B$ coincide on $\M\cap \N$, that is if $C_{\M,\N}(A,B)$ is well-defined, then:

\begin{itemize}
  \item $C_{\M,\N}(A,B)$ is bounded if and only if $\R(A^*-B^*)\subseteq \M^\bot + \N^\bot$;

  \item $C_{\M,\N}(A,B)$ is closable if and only if $(A^*-B^*)^{-1}(\M^\bot + \N^\bot)$ is dense;

  \item $C_{\M,\N}(A,B)$ is closed if and only if $\R(A^*-B^*) + \M^\bot + \N^\bot$ is closed.
\end{itemize}

\section{Miscellaneous}
\subsection{The operator equation $B=XA$}

Recall a famous theorem by Douglas \cite{Douglas} which relates the range inclusion, (left) factorization and majorization for two bounded Hilbert space operators.

\begin{theorem}[\cite{Douglas}]\label{djtDouglas}
Let $\H,\K$ and $\L$ be Hilbert spaces, $S\in \B(\H,\K)$ and $T\in \B(\F,\K)$. The following statements are equivalent:
\begin{itemize}
  \item[(i)] $\R(T)\subseteq \R(S)$;
  \item[(ii)] There exists $X\in \B(\F,\H)$ such that $T=SX$;
  \item[(iii)] There exists $\lambda>0$ such that  $TT^*\leq \lambda^2 SS^*$.

\end{itemize}
\end{theorem}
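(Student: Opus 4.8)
The plan is to prove the equivalence by establishing the four implications (ii)$\Rightarrow$(i), (ii)$\Rightarrow$(iii), (iii)$\Rightarrow$(ii), and (i)$\Rightarrow$(ii), which together close the loop. The implications emanating from (ii) are immediate; the implication (i)$\Rightarrow$(ii) is a closed graph argument; and the substantive content lies in (iii)$\Rightarrow$(ii).

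I would first dispatch the two easy directions. If $T=SX$ with $X\in\B(\F,\H)$, then $\R(T)=\R(SX)\subseteq\R(S)$, which gives (i), and since $XX^*\le\|X\|^2 I$ we get $TT^*=SXX^*S^*\le\|X\|^2\,SS^*$, which gives (iii) with $\lambda=\|X\|$.

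The core step is (iii)$\Rightarrow$(ii). Assuming $TT^*\le\lambda^2 SS^*$, the key observation is that this operator inequality is precisely a pointwise domination of norms: for every $y\in\K$ one has $\|T^*y\|^2=\sk{TT^*y}{y}\le\lambda^2\sk{SS^*y}{y}=\lambda^2\|S^*y\|^2$. Hence the assignment $S^*y\mapsto T^*y$ is a well-defined linear map on $\R(S^*)$ — if $S^*y=0$ the estimate forces $T^*y=0$ — bounded with norm at most $\lambda$. I would extend it by continuity to $\cl{\R(S^*)}$ and by $0$ on the orthogonal complement $\N(S)$, obtaining $D\in\B(\H,\F)$ with $\|D\|\le\lambda$ and $DS^*=T^*$ on $\K$. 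Taking adjoints yields $SD^*=T$, so $X:=D^*\in\B(\F,\H)$ is the desired factor (with the bonus that $\|X\|\le\lambda$ and $\R(X)\subseteq\N(S)^\bot$).

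Finally, for (i)$\Rightarrow$(ii): given $\R(T)\subseteq\R(S)$, for each $f\in\F$ the equation $Sh=Tf$ is solvable, and I would let $Xf$ be its unique solution in $\N(S)^\bot$ (the minimal-norm solution). Uniqueness makes $X$ linear, and boundedness follows from the closed graph theorem: if $f_n\to f$ and $Xf_n\to g$, then $Tf_n=S(Xf_n)\to Sg$ and also $Tf_n\to Tf$, so $Sg=Tf$, while $g\in\N(S)^\bot$ because that subspace is closed; by uniqueness $g=Xf$, so $X$ has closed graph and hence is bounded. The main obstacle is (iii)$\Rightarrow$(ii), and the only delicate point there is recognizing that $TT^*\le\lambda^2 SS^*$ is equivalent to $\|T^*y\|\le\lambda\|S^*y\|$ for all $y$; after that, the well-definedness, continuous extension, and adjoint-taking are routine. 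One could instead vary the cycle by proving (i)$\Rightarrow$(iii) directly, but any such argument effectively reconstructs the operator $X$, so the closed-graph route above is the most economical.
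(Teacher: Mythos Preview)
Your proof is correct and is essentially the standard argument for Douglas' theorem. Note, however, that the paper does not supply its own proof of this statement: Theorem~\ref{djtDouglas} is simply quoted from \cite{Douglas} and used as a tool, so there is nothing to compare against. Your write-up would serve perfectly well as a self-contained proof if one were desired.
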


%\textcolor{red}{Mo\v ze li da se izvede analogna teorema za semiclosed operatore $S$ i $T$? Preciznije, ako je $S\in \SC(\H,\K)$ i $T\in \SC(\F,\K)$ koji su potrebni i dovoljni uslovi da postoji $X\in \SC(\F,\H)$ tako da je $T=SX$?}

Douglas' theorem was generalized in several directions (see for example \cite{Barnes} for a treatment on Banach spaces, and the recent paper \cite{Manuilov} and references therein for similar results in the Hilbert $C^*$-module setting).
One interpretation of this theorem is that the operator equation $T=SX$ has a bounded solution as soon as it has some linear solution $X_0 : \F \to \H$. This is not true for the dual equation: $B=XA$. Namely, if $B=X_0 A$ for some operator $X_0$, then we can only guarantee that there exists a semiclosed solution of $B=XA$, as demonstrated by the following theorem.

\begin{theorem}\label{djt4}
Let $\H, \K$ and $\F$ be Hilbert spaces and $A\in \B(\F,\H)$, $B\in \B(\F,\K)$. There exists an operator $X_0 : \H\to \K$ satisfying $B=X_0A$ if and only if the equation $B=XA$ has a solution in $\SC(\H,\K)$. Moreover:
\begin{itemize}
  \item[1.] The equation $B=XA$ has a solution $X\in \B(\H,\K)$ if and only if $\R(B^*)\subseteq \R(A^*)$;
  \item[2.] The equation $B=XA$ has a closed solution $X:\D(X)\to \K,\  \R(A)\subseteq \D(X)\subseteq \H$ if and only if $(B^*)^{-1}(\R(A^*))$ is dense in $\K$;
  \item[3.] The equation $B=XA$ has a closed solution $X:\D(X)\to \K,\  \R(A)= \D(X)$  if and only if $\R(A^*)+\R(B^*) = \cl{\R(A^*)}$.
\end{itemize}
\end{theorem}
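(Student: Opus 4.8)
The plan is to funnel everything through the canonical quotient solution $B/A$ and to invoke Lemma~\ref{djl11}. First I would pin down the right solvability hypothesis: if $X$ is \emph{any} operator with $B=XA$, then, since $0\in\D(X)$, for every $x\in\N(A)$ we get $Bx=X(Ax)=X(0)=0$, so $\N(A)\subseteq\N(B)$; this holds in particular when $B=X_0A$ for an everywhere defined $X_0$, and when $B=XA$ has a solution in $\SC(\H,\K)$. Conversely, if $\N(A)\subseteq\N(B)$ the quotient $B/A$ is well defined with domain $\R(A)$, lies in $\SC(\H,\K)$ since it is a quotient of bounded operators (cf.\ the remarks after Definition~\ref{djd1}), and satisfies $(B/A)(Ax)=Bx$ for all $x\in\F$, so it solves $B=XA$; extending $B/A$ by $0$ on an algebraic complement of $\R(A)$ in $\H$ produces an everywhere defined $X_0$ with $B=X_0A$. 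This settles the first equivalence, and also shows that every solution $X$ of $B=XA$ is forced to agree with $B/A$ on $\R(A)$, since $X(Ax)=Bx$.

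With this in hand I would recast the three ``moreover'' statements as properties of $B/A$. If $X\in\B(\H,\K)$ and $B=XA$, then $X$ restricts to a bounded $B/A$; conversely a bounded $B/A$ extends by continuity to $\cl{\R(A)}$ and, composed with $P_{\cl{\R(A)}}$, yields a bounded solution on $\H$ --- so item~1 is ``$B/A$ bounded''. A closed solution $X$ (its domain automatically contains $\R(A)$) is a closed extension of $B/A$, and conversely $\cl{B/A}$, when it exists, is such a solution --- so item~2 is ``$B/A$ closable''; if furthermore one insists $\D(X)=\R(A)$, then necessarily $X=B/A$, so item~3 is ``$B/A$ closed''.

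Finally Lemma~\ref{djl11} applies directly, as $A\in\B(\F,\H)$ and $B\in\B(\F,\K)$ share the space $\F$ and $\N(A)\subseteq\N(B)$: it yields $B/A$ bounded $\iff\R(B^*)\subseteq\R(A^*)$, $B/A$ closable $\iff(B^*)^{-1}(\R(A^*))$ dense in $\K$, and $B/A$ closed $\iff\R(A^*)+\R(B^*)$ closed. For item~3 one needs the additional observation that $\N(A)\subseteq\N(B)$ forces $\R(B^*)\subseteq\cl{\R(B^*)}=\N(B)^\bot\subseteq\N(A)^\bot=\cl{\R(A^*)}$, so a closed subspace lying between $\R(A^*)$ and $\cl{\R(A^*)}$ must equal $\cl{\R(A^*)}$; hence ``$\R(A^*)+\R(B^*)$ closed'' is the same as ``$\R(A^*)+\R(B^*)=\cl{\R(A^*)}$''. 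For the ``if'' directions one checks that each right-hand condition already entails $\N(A)\subseteq\N(B)$ (immediate for items~1 and~3 on passing to closures; for item~2, density of $(B^*)^{-1}(\R(A^*))$ and boundedness of $B^*$ give $\R(B^*)\subseteq\cl{\R(A^*)}$), so that $B/A$ is defined and the lemma is applicable. The main obstacle, such as it is, is purely organizational: the only work is the bookkeeping that pairs the three notions of ``solution'' with the three continuity properties of $B/A$ while keeping track of domains --- once the equation is viewed through the quotient $B/A$, the theorem is exactly the ``dual'' of Douglas' theorem together with the closability/closedness refinements already packaged in Lemma~\ref{djl11}.
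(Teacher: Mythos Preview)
Your proof is correct and follows essentially the same route as the paper: reduce everything to the canonical solution $B/A$ and invoke Lemma~\ref{djl11}. You have in fact filled in more of the bookkeeping than the paper does --- in particular, the verification that each right-hand condition in 1--3 already forces $\N(A)\subseteq\N(B)$, and the observation for item~3 that ``$\R(A^*)+\R(B^*)$ closed'' upgrades to ``$=\cl{\R(A^*)}$'' under this kernel inclusion --- both of which the paper leaves implicit.
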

\begin{proof}
If $X_0A = B$ for some operator $X_0 : \H\to \K$, then necessarily $\N(A)\subseteq \N(B)$, and therefore there is a $T\in \SC(\H,\K)$ satisfying $B=TA$, namely $T=B/A$. The opposite implication is immediate, since any solution of $B=XA$ can be extended to an operator defined on $\H$.

All the statements in 1. 2. and 3. imply $\N(A)\subseteq \N(B)$, and so the desired equivalences follow directly from Lemma \ref{djl11}, given that any solution of $B=XA$ has to be an extension of $B/A$. Of course, 1. also follows from Douglas' theorem.
\end{proof}

In the previous theorem we related the existence of a solution of $B=XA$ having a given continuity property with a ``range inclusion" condition, analogous to (i) in Theorem \ref{djtDouglas}. In the next theorem we give majorization conditions, analogous to (iii) in Theorem \ref{djtDouglas}.

\begin{theorem}\label{djt5}
Let $\H, \K$ and $\F$ be Hilbert spaces and $A\in \B(\F,\H)$, $B\in \B(\F,\K)$. Consider the following conditions for $A$ and $B$.
\begin{itemize}
  \item[(B)] For every sequence $(x_n)_{n\in \mathbb{N}} \subseteq \H$ the following implication holds: $$\mbox{If}\ Ax_n \to 0\ \mbox{then:}\ Bx_n\to 0.$$
  \item[(Ca)]  For every sequence $(x_n)_{n\in \mathbb{N}} \subseteq \H$ the following implication holds: $$\mbox{If}\ Ax_n \to 0\ \mbox{and}\ Bx_n \to x\  \mbox{then:}\ x= 0.$$
  \item[(C)]  For every sequence $(x_n)_{n\in \mathbb{N}} \subseteq \H$ the following implication holds: $$\mbox{If}\ Ax_n \to x\ \mbox{and}\ Bx_n \to y\  \mbox{then:}\ x\in \R(A)\ \mbox{and}\ BA^\dag x = y.$$
\end{itemize}
Conditions (B), (Ca) and (C) are equivalent to, respectively, statements from 1. 2. and 3. in Theorem \ref{djt4}.
\end{theorem}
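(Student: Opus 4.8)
The plan is to relate each of the sequence conditions (B), (Ca), (C) to the boundedness, closability, and closedness of the operator $B/A$ (equivalently $BA^\dag$, by Lemma \ref{djl3}), and then invoke Theorem \ref{djt4} (or directly Lemma \ref{djl11}). The key observation is that the three hypotheses in Theorem \ref{djt5} are, after a change of variable, nothing but the $\epsilon$--$\delta$-free restatements of ``$B/A$ is bounded'', ``$B/A$ is closable'', ``$B/A$ is closed'', read off from the graph $\Gamma(B/A)=\{(Ax,Bx):x\in\F\}$ together with the closed graph theorem.

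\textbf{Step 1: reduction to the operator $B/A$.} First I would recall that since $B=X_0A$ need not hold a priori, condition (B) already forces $\N(A)\subseteq\N(B)$: if $Ax=0$ then the constant sequence $x_n=x$ satisfies $Ax_n\to 0$, hence $Bx_n=Bx\to 0$, i.e. $Bx=0$. (Conditions (Ca) and (C) likewise imply $\N(A)\subseteq\N(B)$, taking $x_n$ constant.) So in all three cases $B/A$ is a well-defined semiclosed operator with $\D(B/A)=\R(A)$, and by Lemma \ref{djl3}(2) it agrees with $BA^\dag$ on $\R(A)$; by Lemma \ref{djl9}, $B/A$ has a given continuity property iff $BA^\dag$ does. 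Thus it suffices to match (B) $\leftrightarrow$ ``$B/A$ bounded'', (Ca) $\leftrightarrow$ ``$B/A$ closable'', (C) $\leftrightarrow$ ``$B/A$ closed'', and then apply Theorem \ref{djt4}.1--3.

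\textbf{Step 2: the three equivalences.} For (B): $B/A$ is bounded on $\R(A)$ iff for every sequence $y_n=Ax_n\in\R(A)$ with $y_n\to 0$ we have $(B/A)y_n=Bx_n\to 0$; this is precisely condition (B) (boundedness of a linear operator is equivalent to sequential continuity at $0$, and since $\R(A)$ need not be closed one uses sequences in $\R(A)$, which is exactly what the $x_n$'s parametrize). For (Ca): by \eqref{djeq11}, $B/A$ is closable iff whenever $y_n=Ax_n\to 0$ and $(B/A)y_n=Bx_n\to x$ then $x=0$ — verbatim condition (Ca). For (C): $B/A$ is closed iff whenever $y_n=Ax_n\to x$ and $Bx_n\to y$ then $x\in\D(B/A)=\R(A)$ and $(B/A)x=y$; and on $\R(A)$ the operator $B/A$ coincides with $BA^\dag$, so ``$(B/A)x=y$'' reads ``$BA^\dag x=y$'', which is exactly condition (C). In each case the matching is a direct unwinding of definitions, with the only subtlety being the identification of the range $\R(A)$ as the set swept out by $Ax$ as $x$ ranges over $\F$.

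\textbf{Step 3: conclusion via Theorem \ref{djt4}.} Having identified (B), (Ca), (C) with boundedness, closability, and closedness of $B/A$ respectively, the equivalences with statements 1., 2., 3. of Theorem \ref{djt4} are immediate, since those statements were themselves characterized in Theorem \ref{djt4} (via Lemma \ref{djl11}) by exactly these continuity properties of $B/A=BA^\dag$. I do not expect a serious obstacle here; the one point requiring a little care is making sure that the quantifier ``for every sequence $(x_n)\subseteq\H$'' in the hypotheses (rather than ``$\subseteq\F$'') causes no mismatch — but $A$ and $B$ are only defined on $\F$, so this should read $\F$, and in any event sequences in $\R(A)$ are exactly the images $Ax_n$, so the translation is faithful. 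The mild delicacy is purely notational rather than mathematical.
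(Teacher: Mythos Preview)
Your argument is correct and follows essentially the same route as the paper: translate each sequence condition into the corresponding continuity property (bounded, closable, closed) of the quotient $B/A$ via the sequential characterizations around \eqref{djeq11}, and then invoke Theorem~\ref{djt4}/Lemma~\ref{djl11}. The paper's own proof is terser --- it cites \cite[Proposition~3]{Barnes} for (B) and \cite[Lemma~2.3]{Izumino2} for (Ca), and only spells out (C) directly --- but the underlying idea is identical, and you are right that the quantifier ``$(x_n)\subseteq\H$'' in the statement is a typo for $\F$.
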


\begin{proof}
For (B) see \cite[Proposition 3]{Barnes}, and for (Ca) see \cite[Lemma 2.3]{Izumino2} (having in mind that (Ca) implies $\N(A)\subseteq \N(B)$). Note also that (C) is equivalent to $B/A$ being well-defined and closed. Indeed, if (C) holds, then $\N(A)\subseteq \N(B)$ and using \eqref{djeq11} we see that $B/A$ is closed. Conversely, again using \eqref{djeq11} we have that (C) is satisfied. This proves that (C) is equivalent to the statements from 3. in  Theorem \ref{djt4}.
\end{proof}

We now refer to the setting of our problem. We will use the notation as in Section \ref{djsec1}, and for the sake of brevity, we will only give results about $C_{\M,\N}(A,B)$ being (well-defined and) bounded.

\begin{theorem}\label{djt6} The following statements are equivalent:
\begin{itemize}
\item[(i)] There exists a bounded operator $C:\H\to \K$ which coincides with $A$ on $\M$ and with $B$ on $\N$;
\item[(ii)] The equation $A-B = X (P_{\M^\bot} + P_{\N^\bot})^{1/2}$ has a solution in $\B(\H,\K)$;
\item[(iii)] The equation $(A-B)P_\M = X P_{\N^\bot}P_\M$ has a solution in $\B(\H,\K)$.
\end{itemize}
\end{theorem}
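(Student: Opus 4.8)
The plan is to connect all three statements to the criterion from Theorem \ref{06djt1}, part 1, namely that $C_{\M,\N}(A,B)$ is (well-defined and) bounded precisely when $\R(A^*-B^*)\subseteq \M^\bot+\N^\bot$, and then translate that range inclusion into the two factorization statements by invoking Douglas' theorem (Theorem \ref{djtDouglas}). The key observation is that $\M^\bot+\N^\bot$, being the sum of two operator ranges, is itself an operator range, and more specifically $\M^\bot+\N^\bot=\R(P_{\M^\bot})+\R(P_{\N^\bot})=\R\big((P_{\M^\bot}P_{\M^\bot}^*+P_{\N^\bot}P_{\N^\bot}^*)^{1/2}\big)=\R\big((P_{\M^\bot}+P_{\N^\bot})^{1/2}\big)$, using that orthogonal projections are self-adjoint idempotents. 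This is exactly the fact recorded in the Preliminaries about sums of operator ranges.

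First I would establish $(i)\Leftrightarrow(ii)$. By Theorem \ref{06djt1}(1), statement $(i)$ holds iff $\R(A^*-B^*)\subseteq \M^\bot+\N^\bot$. Now set $S=(P_{\M^\bot}+P_{\N^\bot})^{1/2}$, so that $S\in\B(\H)$, $S=S^*$, and by the computation above $\R(S)=\M^\bot+\N^\bot$. Applying Douglas' theorem to $S$ and $T=A^*-B^*=(A-B)^*$: the range inclusion $\R((A-B)^*)\subseteq\R(S)$ is equivalent to the existence of $Y\in\B(\H,\K)$ with $(A-B)^*=SY$. Taking adjoints (and using $S=S^*$) gives $A-B=Y^*S=XS$ with $X=Y^*\in\B(\H,\K)$; conversely such an $X$ yields $(A-B)^*=SX^*$. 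Hence $(i)\Leftrightarrow(ii)$.

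For $(i)\Leftrightarrow(iii)$ I would argue similarly but use the operator-range identity $\R(P_{\N^\bot}P_\M)=\N^\bot\cap(\M+\N)$ together with $(P_{\N^\bot}P_\M)^*=P_\M P_{\N^\bot}$; more to the point, I want to identify the range of $P_\M P_{\N^\bot}$ (or equivalently of its square root) with something comparable to $\M^\bot+\N^\bot$ intersected appropriately. The cleanest route is: $(A-B)P_\M=XP_{\N^\bot}P_\M$ has a bounded solution iff $\R\big(((A-B)P_\M)^*\big)=\R\big(P_\M(A^*-B^*)\big)\subseteq\R\big((P_\M P_{\N^\bot}P_{\N^\bot}P_\M)^{1/2}\big)=\R(P_\M P_{\N^\bot})$ by Douglas applied to the operators $P_\M P_{\N^\bot}$ and $P_\M(A^*-B^*)$, again using that adjoints convert a left factorization into the stated right factorization. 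So it remains to show $\R(P_\M(A^*-B^*))\subseteq\R(P_\M P_{\N^\bot})$ is equivalent to $\R(A^*-B^*)\subseteq\M^\bot+\N^\bot$. Since $A$ and $B$ coincide on $\M\cap\N$ (which both $(i)$ and $(iii)$ force, as in the proof of Theorem \ref{06djt1}), we have $\R(A^*-B^*)\subseteq(\M\cap\N)^\bot=\cl{\M^\bot+\N^\bot}$; one then checks that for a vector $x$ with $x\perp(\M^\bot+\N^\bot)^\bot$, the condition $x\in\M^\bot+\N^\bot$ is equivalent to $P_\M x\in\R(P_\M P_{\N^\bot})$, because $\R(P_\M P_{\N^\bot})=\M\cap(\M^\bot+\N^\bot)$ (using $\R(P_\M P_{\N^\bot})=\R(P_\M)\cap(\N^\bot+\N^\bot)\cap\dots$ — more carefully, from \eqref{ObrisiLabelu?}, $\R(P_\M P_{\N^\bot})=\M\cap(\M^\bot+\R(P_{\N^\bot}))=\M\cap(\M^\bot+\N^\bot)$) and $x\mapsto P_\M x$ reconstructs $x$ from its $\M$-component plus an $\M^\bot$-term which is always in $\M^\bot+\N^\bot$.

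The main obstacle I anticipate is the bookkeeping in $(i)\Leftrightarrow(iii)$: making precise the passage between $\R(A^*-B^*)\subseteq\M^\bot+\N^\bot$ and the compressed inclusion $\R(P_\M(A^*-B^*))\subseteq\R(P_\M P_{\N^\bot})$, since the compression by $P_\M$ loses the $\M^\bot$-part of each vector and one must carefully exploit that the lost part automatically lies in $\M^\bot\subseteq\M^\bot+\N^\bot$, while the recovered part $P_\M x$ landing in $\M\cap(\M^\bot+\N^\bot)$ is exactly what is needed. Everything else is a direct application of Douglas' theorem plus the identity $\R((CC^*)^{1/2})=\R(C)$ for the relevant positive operators, so the writeup stays short.
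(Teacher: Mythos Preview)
Your argument is correct. The equivalence $(i)\Leftrightarrow(ii)$ is handled exactly as in the paper: Theorem~\ref{06djt1} plus Douglas' theorem, together with the range identity $\R\big((P_{\M^\bot}+P_{\N^\bot})^{1/2}\big)=\M^\bot+\N^\bot$.

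For $(i)\Leftrightarrow(iii)$ you take a genuinely different and more elementary route than the paper. The paper reaches $(iii)$ through the quotient machinery: it invokes Lemma~\ref{djl4} to write $C_{\M,\N}(A,B)=(A-B)(P_{\N^\bot}P_\M)^\dag+B$ on $\M+\N$, then uses Lemma~\ref{djl3}(4) to translate boundedness of $(A-B)(P_{\N^\bot}P_\M)^\dag$ into a range inclusion, and finally simplifies the resulting equation $(A-B)P_{\N(P_{\N^\bot}P_\M)^\bot}=XP_{\N^\bot}P_\M$ to $(A-B)P_\M=XP_{\N^\bot}P_\M$ using $\M\cap\N\subseteq\N(A-B)$. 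You bypass all of this by applying Douglas directly to the adjoint equation and reducing to the concrete equivalence
\[
\R(A^*-B^*)\subseteq \M^\bot+\N^\bot \quad\Longleftrightarrow\quad \R\big(P_\M(A^*-B^*)\big)\subseteq \M\cap(\M^\bot+\N^\bot)=\R(P_\M P_{\N^\bot}),
\]
which you justify with the transparent decomposition $x=P_\M x+P_{\M^\bot}x$ and the identity from~\eqref{ObrisiLabelu?}. Note that this equivalence in fact holds for every $x\in\H$, so your restriction to $x\perp(\M^\bot+\N^\bot)^\bot$ is unnecessary (though harmless). Your approach is shorter and self-contained for this theorem; the paper's approach has the virtue of exhibiting $(iii)$ as a direct consequence of the quotient form of $C_{\M,\N}$, tying it back to the Moore--Penrose framework developed in Section~4.
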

\begin{proof}
To prove that (i) and (ii) are equivalent we just use Theorems \ref{06djt1} and \ref{djtDouglas} and the fact that $\R((P_{\M^\bot} + P_{\N^\bot})^{1/2}) = \M^\bot + \N^\bot$. To see that these statements are equivalent to (iii), first we note that they all imply $\M\cap \N\subseteq \N(A-B)$, and then we can proceed, for example, in the following way: from Lemma \ref{djl4} we have that $C_{\M,\N}(A,B)$ is bounded if and only if $(A-B)(\proj)^\dag$ is bounded, which is by Lemma \ref{djl3} equivalent to $\R(P_{\N(\proj)^\bot} (A-B)^*)\subseteq \R((\proj)^*)$. Therefore, $C_{\M,\N}(A,B)$ is bounded if and only if the equation $(A-B)P_{\N(\proj)^\bot} = X \proj$ has a solution in $\B(\H,\K)$. Since $\N(\proj)^\bot = \M\ominus(\M\cap \N)$ and $\M\cap \N\subseteq \N(A-B)$, the last equation is equivalent to $(A-B)P_\M = X P_{\N^\bot}P_\M$.
\end{proof}

As a corollary, we give a metric test for checking whether $C_{\M,\N}(A,B)$ is bounded.

\begin{corolary}\label{djc1}
If $\M\cap \N=\{0\}$ then $C_{\M,\N}(A,B)$ is bounded if and only if
%\begin{equation}\label{djeq12}
%\sup_{\substack{m\in \M,\ n\in \N \\ \|m\|=\|n\|=1}} \frac{\|(A-B)m\|^2}{1-|\sk{m}{n}|} < %\infty.\end{equation}
\begin{equation}
\label{djeq12}
\sup_{\substack{m\in \M \\ \|m\|=1}} \frac{\|(A-B)m\|^2}{1-\|P_\N m\|} < \infty.
\end{equation}
\end{corolary}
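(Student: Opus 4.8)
The plan is to specialize Theorem \ref{djt6}, part (iii), to the case $\M \cap \N = \{0\}$ and translate the condition ``$(A-B)P_\M = XP_{\N^\bot}P_\M$ has a bounded solution'' into the boundedness of a certain quotient operator, then apply Douglas' majorization criterion (Theorem \ref{djtDouglas}) in the form $TT^* \leq \lambda^2 SS^*$ and simplify. Concretely, by Theorem \ref{djt6}, $C_{\M,\N}(A,B)$ is bounded if and only if the map $P_{\N^\bot}P_\M m \mapsto (A-B)m$, defined on $\R(P_{\N^\bot}P_\M)$, is bounded (this is well-defined since $\M \cap \N = \{0\}$ forces $\N(P_{\N^\bot}P_\M) = \M^\bot$, hence $\N(P_{\N^\bot}P_\M|_\M)$ is trivial, so every element of the range has a unique preimage in $\M$). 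Thus the quotient $(A-B)|_\M \,/\, (P_{\N^\bot}P_\M)|_\M$ is bounded iff $C_{\M,\N}(A,B)$ is.

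**Key steps.** First I would reduce to showing that this quotient is bounded iff there is $\lambda > 0$ with $\|(A-B)m\|^2 \leq \lambda^2 \|P_{\N^\bot}P_\M m\|^2$ for all $m \in \M$; this is exactly the majorization form of Douglas' theorem applied to the pair of bounded operators $(A-B)P_\M$ and $P_{\N^\bot}P_\M$ restricted to $\M$ (or phrased via adjoints as in Theorem \ref{djt6}(iii)). Second, I would compute, for a unit vector $m \in \M$, that $\|P_{\N^\bot}P_\M m\|^2 = \|P_{\N^\bot} m\|^2 = \|m\|^2 - \|P_\N m\|^2 = 1 - \|P_\N m\|^2$. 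Third, I would observe that $1 - \|P_\N m\|^2 = (1 - \|P_\N m\|)(1 + \|P_\N m\|)$ and that $1 \leq 1 + \|P_\N m\| \leq 2$, so that $1 - \|P_\N m\|^2$ and $1 - \|P_\N m\|$ are comparable up to a factor of $2$; hence the finiteness of the supremum in \eqref{djeq12} is equivalent to the finiteness of $\sup_{\|m\|=1,\, m \in \M} \|(A-B)m\|^2 / (1 - \|P_\N m\|^2)$, which is precisely the majorization constant. Putting these together yields the equivalence. (One should also note that when $\|P_\N m\| = 1$ for some unit $m \in \M$, i.e. $m \in \N$, then $m \in \M \cap \N = \{0\}$, a contradiction, so the denominators are strictly positive and the supremum is over a genuine quantity; if $\overline{\M+\N} \neq \H$ this causes no issue since everything is intrinsic to $\M$.)

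**Main obstacle.** The routine part is the geometric identity $\|P_{\N^\bot}P_\M m\| = \|P_{\N^\bot}m\|$ for $m\in\M$ and the elementary comparison of $1-t$ with $1-t^2$. The slightly more delicate point is justifying the passage through Douglas' theorem: Theorem \ref{djt6}(iii) gives a solvability statement for an operator equation on all of $\H$, and I must argue that solvability of $(A-B)P_\M = XP_{\N^\bot}P_\M$ in $\B(\H,\K)$ is equivalent (via Theorem \ref{djtDouglas}, using $\R(((A-B)P_\M)^*) \subseteq \R((P_{\N^\bot}P_\M)^*)$ in the range form, or the majorization $((A-B)P_\M)((A-B)P_\M)^* \leq \lambda^2 (P_{\N^\bot}P_\M)(P_{\N^\bot}P_\M)^*$) to an inequality that, when tested on unit vectors of $\M$, becomes the stated supremum bound. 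This is where care is needed, but it is a direct application of the already-proven Theorem \ref{djt6} together with Theorem \ref{djtDouglas}, so I do not expect a genuine difficulty — only bookkeeping with projections.
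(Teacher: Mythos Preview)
Your proposal is correct and follows essentially the same route as the paper: invoke Theorem~\ref{djt6}(iii), pass through Douglas' majorization criterion to obtain $\|(A-B)m\|^2 \leq \lambda^2 \|P_{\N^\bot}m\|^2$ for $m\in\M$, rewrite $\|P_{\N^\bot}m\|^2 = 1-\|P_\N m\|^2$ for unit $m$, and then use the elementary comparability of $1-\|P_\N m\|$ and $1-\|P_\N m\|^2$. Your extra remarks (well-definedness of the quotient via $\M\cap\N=\{0\}$, strict positivity of the denominators) are correct and match what the paper leaves implicit.
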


\begin{proof}
From statement (iii) in Theorem \ref{djt6} and Douglas' theorem we can conclude that $C_{\M,\N}(A,B)$ is bounded if and only if $$\| (A-B)P_\M x\|^2 \leq \lambda^2 \|P_{\N^\bot} P_\M x\|^2, \mbox{ for some }\lambda>0 \mbox{ and every }x\in \H.$$ The last statement can be restated as $\sup_{m\in \M\setminus\{0\}} \|(A-B) m\|^2 / \|P_{\N^\bot} m \|^2 <\infty$, or after renorming and using $\|P_{\N^\bot} m\|^2 = \|m\|^2 - \|P_{\N} m\|^2$, as
\begin{equation}\label{djeq13}
  \sup_{\substack{m\in \M \\ \|m\|=1}} \frac{\|(A-B)m\|^2}{1-\|P_\N m\|^2} < \infty.
\end{equation}
Note that the denominator in  \eqref{djeq13} is equal to $(1-\|P_\N m\|)(1+\|P_\N m\|)$ and $(1+\|P_\N m\|)^{-1} \in (1/2, 1]$, for $\|m\|=1$. Thus, \eqref{djeq13} is equivalent to \eqref{djeq12}.
\end{proof}

Corollary \ref{djc1} could also be useful if the intersection $\M\cap \N$ is non-trivial, but $A$ and $B$ coincide on it, given that $C_{\M,\N}(A,B)$ is bounded if and only if $C_{\M\ominus(\M\cap \N), \N\ominus (\M\cap \N)}(A,B)$ is bounded.% In fact, we can use the same expression as in \eqref{djeq12}, just with $m$ running through all unimodular elements of $\M\ominus (\M\cap\N)$.

\begin{rem}\label{fdjr}
A well-known angle test says that $\M\dotplus \N$ is closed if and only if $\sup \{|\sk{x}{y}|\ :\ x\in \M,\ y\in \N,\ \|x\|=\|y\|=1\} < 1$ (see \cite{D}). This is contained in Corollary \ref{djc1}. Namely, $\M\dotplus\N$ is closed if and only if $C_{\M,\N}(I,0)$ is bounded (see Lemma \ref{djl7}). We also know that for $m\in \H$ such that $\|m\|=1$, $\|P_\N m\| = \max\{|\sk{m}{n}|\ :\ n\in \N,\ \|n\|=1\}$. Putting together these conclusions, from Corollary \ref{djc1} we get that $\M \dotplus \N$ is closed if and only if $\inf \{1-|\sk{m}{n}|\ :\ m\in \M, n\in \N, \|m\|=\|n\|=1\} > 0$, which is exactly the angle test. \hfill $\diamond$
\end{rem}

\subsection{Canonical decomposition for two subspaces}

By now it is obvious that the continuity properties of $C_{\M,\N}(A,B)$ and $C_{\M,\N}(A-B,0)$ coincide, and in fact they are encoded in the relation between two operators restricted on $\M$: the operator $A-B|_\M$ and the operator $P_{\N^\bot}|_\M$ (see e.g. statement (iii) of Theorem \ref{djt6}). Halmos' two projections theorem \cite{Halmos1} gives us a canonical approach to problems regarding the relationship of two orthogonal projections, and a clear description of the restriction $P_{\N^\bot}|_\M$. Thus, we wish to describe our results in the language of this far-reaching approach. We also refer the reader to the survey paper \cite{Bottcher1} for a detailed reference on these matters, and to \cite{Bottcher2} for even more examples.

If $\M$ and $\N$ are two closed subspaces in a Hilbert space $\H$, then the subspaces $\M$ and $\M^\bot$ can be decomposed with respect to $\N$ as: $\M=(\M\cap\N) \oplus (\M \cap \N^\bot) \oplus \M_0$ and $\M^\bot =( \M^\bot\cap\N) \oplus (\M^\bot\cap \N^\bot) \oplus \M_1$, and so $\H=(\M\cap\N) \oplus (\M \cap \N^\bot) \oplus (\M^\bot\cap\N) \oplus (\M^\bot\cap \N^\bot) \oplus \M_0 \oplus \M_1$. The subspace $\M_0 \oplus \M_1$ is invariant for $P_\N$. In fact, Halmos' two projections theorem says that the subspaces $\M_0$ and $\M_1$ are isomorphic and that, if they are not trivial, there is a unitary operator $R:\M_0 \to \M_1$, and a positive contraction $S\in \B(\M_0)$, such that $S$ and $1-S$ are injective, and that  the operator matrix of $P_{\N}$ with respect to the decomposition $\M_0\oplus \M_1$ has the following form: $$P_{\N}|_{\M_0\oplus \M_1} = \begin{bmatrix} 1-S^2 & S\sqrt{1-S^2}R^* \\ RS\sqrt{1-S^2} & RS^2R^* \end{bmatrix}.$$
The operator $S$ carries all the important information about the relationship between $\M$ and $\N$, so it should also carry the information about boundedness (and other properties) of $C_{\M,\N}(A,B)$. In the following theorem we see that it does.

\begin{theorem}
\label{djt7}
Let  $A$ and $B$ coincide on $\M\cap \N$. If the subspace $\M_0$ is trivial, then $C_{\M,\N}(A,B)$ is bounded. Otherwise, let $D:\M_0 \to \K$ be the restriction of $A-B$ to $\M_0$. Then $C_{\M,\N}(A,B)$ is bounded if and only if $\R(D^*)\subseteq \R(S)$.
\end{theorem}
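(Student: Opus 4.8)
The plan is to reduce the statement to a boundedness criterion already established, namely the one in Theorem \ref{djt6}(iii) (or equivalently Corollary \ref{djc1}), and then to compute the relevant quantities inside Halmos' canonical decomposition. Recall that $C_{\M,\N}(A,B)$ is bounded if and only if $C_{\M\ominus(\M\cap\N),\N\ominus(\M\cap\N)}(A,B)$ is bounded, and by Remark \ref{06djr1} the continuity properties depend only on the pair consisting of $A-B$ restricted to $\M$ and $P_{\N^\bot}$ restricted to $\M$. So the first step is to use the canonical decomposition $\M=(\M\cap\N)\oplus(\M\cap\N^\bot)\oplus\M_0$ and to observe that on $\M\cap\N$ the operator $A-B$ vanishes, while $\M\cap\N^\bot\subseteq\N^\bot$ so $P_{\N^\bot}$ acts as the identity there; hence neither of these two summands poses any obstruction to boundedness, and the whole question localizes to the summand $\M_0$. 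If $\M_0=\{0\}$ this already gives boundedness, which is the first assertion.

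Next, assuming $\M_0\neq\{0\}$, I would apply statement (iii) of Theorem \ref{djt6}: $C_{\M,\N}(A,B)$ is bounded if and only if the equation $(A-B)P_\M=XP_{\N^\bot}P_\M$ has a bounded solution, which by Douglas' theorem (Theorem \ref{djtDouglas}) is equivalent to $\R(P_\M(A-B)^*)\subseteq\R(P_\M P_{\N^\bot})$, i.e. to a range inclusion for the adjoints. Restricting everything to the reducing subspace $\M_0\oplus\M_1$ for $P_\N$, and using the Halmos matrix form, I need to compute $P_{\N^\bot}|_{\M_0\oplus\M_1}=I-P_{\N}|_{\M_0\oplus\M_1}$, whose matrix is $\begin{bmatrix} S^2 & -S\sqrt{1-S^2}R^* \\ -RS\sqrt{1-S^2} & 1-RS^2R^* \end{bmatrix}$. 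Then $P_\M P_{\N^\bot}$ restricted to $\M_0\oplus\M_1$ has matrix $\begin{bmatrix} S^2 & -S\sqrt{1-S^2}R^* \\ 0 & 0 \end{bmatrix}$, because $P_\M$ kills the $\M_1$ component. The key computation is to identify the range of this operator (acting from $\M_0\oplus\M_1$ into $\M_0$): one factors it as $S\cdot\bigl[S\ \ -\sqrt{1-S^2}R^*\bigr]$, and since $\bigl[S\ \ -\sqrt{1-S^2}R^*\bigr]$ is surjective onto $\M_0$ (its Gram operator $S^2+(1-S^2)=I$ is invertible, so by Douglas again it has full range), we get $\R(P_\M P_{\N^\bot}|_{\M_0\oplus\M_1})=\R(S)$.

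On the other side, $P_\M(A-B)^*$ restricted appropriately has range $\R(D^*)$, where $D=(A-B)|_{\M_0}$: indeed the adjoint of $A-B|_\M$, after projecting onto $\M_0$ and using that $A-B$ vanishes on $\M\cap\N$ and that on $\M\cap\N^\bot$ the contribution is irrelevant for the inclusion, contributes exactly $\R(D^*)$ into $\M_0$. Piecing this together, the range inclusion $\R(P_\M(A-B)^*)\subseteq\R(P_\M P_{\N^\bot})$ becomes $\R(D^*)\subseteq\R(S)$, which is the desired criterion. I expect the main obstacle to be the bookkeeping of how the summands $\M\cap\N$ and $\M\cap\N^\bot$ interact with the adjoint and the projections — one must be careful that the ``extra'' pieces of $\R(P_\M(A-B)^*)$ coming from $\M\cap\N^\bot$ are automatically contained in $\R(P_\M P_{\N^\bot})$ (they are, since $\M\cap\N^\bot\subseteq\N^\bot$ means $P_{\N^\bot}$ is the identity on that block, so its range covers $\M\cap\N^\bot$), so that the inclusion genuinely reduces to the $\M_0$-block. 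Once that reduction is justified cleanly, the matrix computation of $\R(S)$ as the range of the $\M_0$-block of $P_\M P_{\N^\bot}$ is the heart of the argument, and it goes through by the factorization trick plus Douglas' theorem as above.
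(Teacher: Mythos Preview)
Your proposal is correct and follows essentially the same route as the paper: both reduce to statement (iii) of Theorem \ref{djt6}, pass to a Douglas-type range inclusion, localize to the $\M_0\oplus\M_1$ block via Halmos' decomposition, and identify the relevant range as $\R(S)$. The only cosmetic difference is in the last step: the paper computes $\R\bigl(\begin{bmatrix} S^2 & \sqrt{1-S^2}SR^*\end{bmatrix}\bigr)=\R(S)$ by invoking $\R(T)=\R((TT^*)^{1/2})$ and noting $TT^*=S^2$, whereas you factor the row as $S\cdot\begin{bmatrix} S & -\sqrt{1-S^2}R^*\end{bmatrix}$ and observe that the second factor is surjective because its Gram operator is the identity --- the two arguments are equivalent one-liners.
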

\begin{proof}
%If $\M_0=\{0\}$ then $\M\bot \N$, and obviously $C_{\M,\N}(A,B)$ is bounded. Suppose that $\M_0\not=\{0\}$.
According to Theorem \ref{djt6}, $C_{\M,\N}(A,B)$ is bounded if and only if the equation $(A-B)P_\M = XP_{\N^\bot}P_\M$ has a solution $X\in \B(\H,\K)$. We directly deduce that, if $\M_0=\{0\}$ this equation surely has a solution, while if $\M_0\not = \{0\}$, the equation has a solution if and only if the equation $D = \begin{bmatrix} X_1\ X_2\end{bmatrix} \begin{bmatrix} S^2 \\ RS\sqrt{1-S^2}\end{bmatrix}$ has a solution $\begin{bmatrix} X_1\ X_2\end{bmatrix} \in \B(\M_0\oplus \M_1, \K)$, or in other words if and only if $\R(D^*)\subseteq \R(\begin{bmatrix} S^2 & \sqrt{1-S^2}SR^*\end{bmatrix})$. The proof is completed by showing that $$\R(\begin{bmatrix} S^2 & \sqrt{1-S^2}SR^*\end{bmatrix}) = \R(S),$$ and this is easily seen to be true, for example, from $$\R(\begin{bmatrix} S^2 & \sqrt{1-S^2}SR^*\end{bmatrix}) = \R((\begin{bmatrix} S^2 & \sqrt{1-S^2}SR^*\end{bmatrix}\cdot \begin{bmatrix} S^2 \\ RS\sqrt{1-S^2} \end{bmatrix})^{1/2}).$$
\end{proof}

Appropriate statements for closability and closedness of $C_{\M,\N}(A,B)$ can be proved in a similar fashion. In fact, the continuity properties of $C_{\M,\N}(A,B)$ coincide to those of the quotient $D/S$. Example 3.2 in \cite{Bottcher1} tells us that $\M+\N$ is closed if and only if $\M_0=\{0\}$ or $S$ is invertible. Of course, this agrees with Theorems \ref{djt7} and \ref{jt8} as well.

\subsection{Coherent pairs and star partial order}

We now describe one application of our results. It is concerned with the so called star partial order, a notion introduced by Drazin \cite{Drazin}, which also appeared in several other instances. For example, it was independently suggested by Gudder \cite{Gudder} as a natural order for bounded quantum observables, but it is also intimately related to a much older notion of *-orthogonality by Hestenes \cite{Hestenes}. We will give a brief introduction here, and we refer the reader to \cite{Djikic, Djikic6} for more information and further  references.
%generalizes the usual order for orthogonal projections to an order for all operators from $\B(\H)$, or more generally from $\B(\H,\K)$. %The relationship between simultaneous bounded extension and the lattice properties of this partial order was already exploited in \cite{Djikic}, through the so called coherent pairs. The results of the present paper will lead a bit further, but the main question (explained below) remains unanswered.

The \textit{ star partial order} $\zv{\leq}$ is defined as
\begin{equation}\label{djeq14} \mbox{For}\ A,C\in\B(\H,\K)\quad \quad A\zv{\leq} C \quad \quad \Leftrightarrow \quad \quad AA^*=CA^* \quad \& \quad A^*A=A^*C.\end{equation}
Equivalently, and more importantly, $A\zv{\leq} C$ if and only if $A$ and $C$ coincide on $\N(A)^\bot$ and $C(\N(A))\subseteq \N(A^*)$. The problem we are concerned with is: if $A,B\in \B(\H,\K)$ what are necessary and sufficient conditions for the star-supremum of $A$ and $B$ to exist? %The question which was raised in \cite{Djikic} is: is this the sufficient condition as well?

In addressing this problem, the first author introduced the notion of \textit{coherent pairs} in \cite{Djikic}. For two closed subspaces $\M,\N\subseteq \H$, and $A,B\in \B(\H,\K)$, the pairs $(A,\M)$ and $(B,\N)$ are called coherent if $C_{\M,\N}(A,B)$ is bounded (in the notation of Section \ref{djsec1}). The first thing we wish to highlight in this section is that we now have a characterization of coherent pairs.
\begin{corolary}
The pairs $(A,\M)$ and $(B,\N)$ are coherent if and only if $\R(A^*-B^*)\subseteq \M^\bot + \N^\bot$.
\end{corolary}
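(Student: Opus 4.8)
The statement to be proved is the final Corollary: the pairs $(A,\M)$ and $(B,\N)$ are coherent if and only if $\R(A^*-B^*)\subseteq \M^\bot + \N^\bot$. By the very definition of coherent pairs given just above, $(A,\M)$ and $(B,\N)$ being coherent means precisely that $C_{\M,\N}(A,B)$ is bounded. So the plan is simply to invoke Theorem \ref{06djt1}, part 1, which asserts exactly that there exists a bounded operator $C:\H\to\K$ coinciding with $A$ on $\M$ and with $B$ on $\N$ — equivalently, that $C_{\M,\N}(A,B)$ is well-defined and bounded — if and only if $\R(A^*-B^*)\subseteq \M^\bot + \N^\bot$.

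The only thing requiring a word of care is the definitional bookkeeping: "coherent" presupposes that $C_{\M,\N}(A,B)$ makes sense, i.e. that $A$ and $B$ agree on $\M\cap\N$. But this is not an extra hypothesis to be checked separately, because (as already noted in the proof of Theorem \ref{06djt1}) the condition $\R(A^*-B^*)\subseteq \M^\bot+\N^\bot$ forces $\M\cap\N\subseteq\N(A-B)$, so the range inclusion alone guarantees that $A$ and $B$ coincide on $\M\cap\N$ and hence that the operator $C_{\M,\N}(A,B)$ is well-defined. Thus the two sides of the equivalence are genuinely equivalent with no hidden side conditions, and the corollary follows immediately.

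I do not anticipate any real obstacle here: this is a restatement of Theorem \ref{06djt1}(1) in the terminology of \cite{Djikic}, and the proof is one line. If anything, the "hard part" is purely expository — making explicit that the notion of coherent pair from \cite{Djikic}, which was originally phrased only in terms of the boundedness of $C_{\M,\N}(A,B)$, now receives a clean intrinsic characterization in terms of the operators $A$, $B$ and the subspaces $\M$, $\N$ directly. One could also remark that, by Remark \ref{06djr1}, this condition is symmetric in $(A,\M)$ and $(B,\N)$, and independent of how $A$ and $B$ act outside $\M$ and $\N$ respectively, so coherence is really a property of the restrictions $A|_\M$ and $B|_\N$ together with the geometric position of $\M$ and $\N$ — but these observations are bonuses, not needed for the proof itself.
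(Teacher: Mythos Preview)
Your proposal is correct and matches the paper's own treatment exactly: the corollary is presented there without proof, as an immediate restatement of Theorem \ref{06djt1}(1) in the language of coherent pairs from \cite{Djikic}. Your remark about well-definedness (that the range inclusion already forces $A$ and $B$ to agree on $\M\cap\N$) is the same observation made in the proof of Theorem \ref{06djt1}, so nothing is missing.
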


The relationship with the star-supremum is obvious: if $A\zv{\leq} C$ and $B \zv{\leq} C$, then the operator $C_{\N(A)^\bot, \N(B)^\bot}(A,B)$ is well-defined and bounded. In fact, the following characterization is proved in \cite{Djikic}.

\begin{lemma}[\cite{Djikic}]\label{fdjl1} The star-supremum of $A$ and $B$ exists if and only if  $C_{\N(A)^\bot, \N(B)^\bot}(A,B)$ and $C_{\N(A^*)^\bot,\N(B^*)^\bot}(A^*,B^*)$ are both well-defined and bounded, and moreover $C(A,B) = C(A^*,B^*)^*$, where $C(A,B)\in \B(\H,\K)$ is the bounded operator which is the continuous extension of the operator $C_{\N(A)^\bot,\N(B)^\bot}(A,B)$ on $\cl{\N(A)^\bot+\N(B)^\bot}$ and the null-operator on $\N(A)\cap\N(B)$, and similarly for the operator $C(A^*,B^*)$.
\end{lemma}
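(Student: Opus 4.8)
\textbf{Proof plan for Lemma \ref{fdjl1}.}

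The plan is to exploit the well-known characterization $A\zv{\leq} C$ if and only if $A$ and $C$ coincide on $\N(A)^\bot$ and $C(\N(A))\subseteq \N(A^*)$, and its natural symmetry under taking adjoints. First I would recall that the star partial order is self-dual in the sense that $A\zv{\leq} C$ is equivalent to $A^*\zv{\leq}C^*$; this is immediate from \eqref{djeq14}. Hence if $C$ is any common upper bound of $A$ and $B$, then $C^*$ is a common upper bound of $A^*$ and $B^*$, and on $\N(A)^\bot$ the operator $C$ must coincide with $A$, on $\N(B)^\bot$ it must coincide with $B$; so $C$ restricted to $\N(A)^\bot + \N(B)^\bot$ is forced to equal $C_{\N(A)^\bot,\N(B)^\bot}(A,B)$, which therefore must be bounded. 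The same applied to $C^*$ forces $C_{\N(A^*)^\bot,\N(B^*)^\bot}(A^*,B^*)$ to be bounded. This gives the ``only if'' direction, and along the way pins down what $C$ must be: on the complement $(\N(A)\cap\N(B))=\N(A)^\bot+\N(B)^\bot$ closed off, i.e.\ on $\cl{\N(A)^\bot+\N(B)^\bot}$ it is the continuous extension, and on $\N(A)\cap\N(B)$ we would like it to vanish; this is exactly the operator $C(A,B)$.

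For the ``if'' direction, assume both $C_{\N(A)^\bot,\N(B)^\bot}(A,B)$ and $C_{\N(A^*)^\bot,\N(B^*)^\bot}(A^*,B^*)$ are well-defined and bounded, and form $C(A,B)$ and $C(A^*,B^*)$ as described. The key steps are then: (1) show $C(A,B)=C(A^*,B^*)^*$ (this is where the symmetry of the construction and the uniqueness of continuous extensions are used — I would verify the adjoint relation by testing $\sk{C(A,B)x}{y}=\sk{x}{C(A^*,B^*)y}$ first for $x\in\N(A)^\bot+\N(B)^\bot$ and $y\in\N(A^*)^\bot+\N(B^*)^\bot$, where both sides reduce to the values of $A,B$ and $A^*,B^*$, and then extend by density and continuity, handling the kernel parts separately using that $C(A,B)$ kills $\N(A)\cap\N(B)$ and $\R(C(A,B))$ lands in the right place); (2) using this, verify $A\zv{\leq}C(A,B)$ and $B\zv{\leq}C(A,B)$ by checking the two defining conditions: coincidence on $\N(A)^\bot$ (resp. $\N(B)^\bot$) is built into the definition of $C_{\N(A)^\bot,\N(B)^\bot}(A,B)$, and the condition $C(A,B)(\N(A))\subseteq\N(A^*)$ follows from $C(A,B)=C(A^*,B^*)^*$ together with the fact that $C(A^*,B^*)$ coincides with $A^*$ on $\N(A^*)^\bot$, so its adjoint maps into $\N(A^*)$ wherever $C(A^*,B^*)$ "doesn't see" — more precisely $\N(A)=\N(A)\subseteq\N(C(A^*,B^*)^*)\ominus$ something, and one traces the range/kernel bookkeeping via \eqref{ObrisiLabelu?} and Lemma \ref{djl8}; (3) show $C(A,B)$ is the \emph{least} upper bound: if $C'$ is any other common upper bound, then by the ``only if'' analysis $C'$ agrees with $C(A,B)$ on $\cl{\N(A)^\bot+\N(B)^\bot}$, and the star order restricted to operators with a fixed "initial space" behaviour is governed by the kernel action, from which $C(A,B)\zv{\leq}C'$.

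The main obstacle I anticipate is step (1), establishing $C(A,B)=C(A^*,B^*)^*$ cleanly, because a priori these two operators are defined by independent extension procedures on the two different (generally non-closed, non-orthogonal) sums of subspaces, and one must reconcile the "diagonal" behaviour on $\N(A)\cap\N(B)$ versus $\N(A^*)\cap\N(B^*)$ with the off-diagonal coupling. The safest route is to first prove the adjoint identity on the dense-in-closure pieces by a direct bilinear computation using only the values $Ax, By$ and $A^*u, B^*v$ (where the star-order hypotheses make the cross terms symmetric), then invoke boundedness and density of $\N(A)^\bot+\N(B)^\bot$ in $\cl{\N(A)^\bot+\N(B)^\bot}$ to pass to the closure, and finally check the orthogonal complement $\N(A)\cap\N(B)$ by noting both $C(A,B)$ and $C(A^*,B^*)^*$ annihilate it — the former by fiat, the latter because $\R(C(A^*,B^*))\perp\N(A)\cap\N(B)$ once one shows $\R(C(A^*,B^*))\subseteq\cl{\N(A^*)^\bot+\N(B^*)^\bot}=(\N(A^*)\cap\N(B^*))^\bot$ is not quite what is needed, rather one uses that $C(A^*,B^*)$ agrees with $A^*$ on $\N(A^*)^\bot$ so its range is contained in $\cl{\R(A^*)}+\cl{\R(B^*)}=\cl{\N(A)^\bot+\N(B)^\bot}$, which is orthogonal to $\N(A)\cap\N(B)$. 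Once the adjoint identity is in hand, everything else is routine verification of the two conditions in the definition of $\zv{\leq}$ plus a minimality argument, and I would keep those brief.
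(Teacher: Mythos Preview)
The paper does not prove Lemma~\ref{fdjl1}; it is quoted from \cite{Djikic} without proof, so there is no in-paper argument to compare against. That said, your plan contains a substantive misreading of the statement.

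You treat the equality $C(A,B)=C(A^*,B^*)^*$ as something to be \emph{derived} in the ``if'' direction (your step~(1)), assuming only that $C_{\N(A)^\bot,\N(B)^\bot}(A,B)$ and $C_{\N(A^*)^\bot,\N(B^*)^\bot}(A^*,B^*)$ are well-defined and bounded. But that equality is part of the \emph{hypothesis}: the lemma asserts that the star-supremum exists iff all three conditions hold simultaneously. The adjoint identity does not follow from the other two. A $2\times 2$ obstruction: with $A=\left[\begin{smallmatrix}1&0\\0&0\end{smallmatrix}\right]$ and $B=\left[\begin{smallmatrix}0&0\\1&1\end{smallmatrix}\right]$ one has $\N(A)^\bot\cap\N(B)^\bot=\{0\}=\N(A^*)^\bot\cap\N(B^*)^\bot$, so both simultaneous extensions are trivially well-defined and bounded; a direct computation gives $C(A,B)=\left[\begin{smallmatrix}1&-1\\0&2\end{smallmatrix}\right]$ while $C(A^*,B^*)^*=\left[\begin{smallmatrix}1&0\\1&1\end{smallmatrix}\right]$, and one checks that $A$ and $B$ admit no common star-upper bound. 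So the ``main obstacle'' you anticipated in step~(1) is not a difficulty to be overcome but an actual failure: the adjoint identity cannot be manufactured from boundedness alone, and your bilinear-form argument for it would have to break down (indeed, the cross terms $\sk{Ax}{v}$ versus $\sk{x}{B^*v}$ for $x\in\N(A)^\bot$, $v\in\N(B^*)^\bot$ have no reason to agree without further assumptions such as \eqref{djeq15}).

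Once the statement is read correctly and $C(A,B)=C(A^*,B^*)^*$ is \emph{assumed} in the ``if'' direction, your steps~(2) and~(3) are the right shape, and your ``only if'' sketch is essentially correct (there one must also argue that the star-supremum $C$ vanishes on $\N(A)\cap\N(B)$, so that $C=C(A,B)$ and hence $C(A,B)^*=C^*=C(A^*,B^*)$).
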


By assuming $A\zv{\leq} C$ and $B \zv{\leq} C$, and by canceling out $C$ from \eqref{djeq14}, we obtain a necessary condition for $A$ and $B$ to have the star-supremum, namely
\begin{equation}\label{djeq15} AA^*B=AB^*B \quad \mbox{and} \quad BA^*A=BB^*A.\end{equation} The question was raised in \cite{Djikic}: is this a sufficient condition as well? %The extent to which condition \eqref{djeq15} amounts to the conditions of Lemma \ref{fdjl1} is given in the following theorem.
In fact, it was shown that this question reduces to whether \eqref{djeq15} imply that $C_{\N(A)^\bot,\N(B)^\bot}(A,B)$ is bounded, see \cite[Theorem 3.1, Theorem 3.2]{Djikic}. We are still not able to answer this question, but using Theorem \ref{06djt1} we can readily check that, if \eqref{djeq15} holds, then $C_{\N(A)^\bot,\N(B)^\bot}(A,B)$ is well-defined and closable (this was proved differently in \cite{Djikic}). %What we know for now is contained in the following theorem. With a harmless abuse of notation, we denote by $C_{\M,\N}(A,B)^*$ the adjoint of the densely-defined operator $C_{\M,\N}(A,B) \oplus 0$ having the domain $(\M+\N)\oplus (\M+\N)^\bot$.
%\begin{theorem}[\cite{Djikic}]\label{fdjt1}
%If $A,B\in \B(\H,\K)$ satisfy \eqref{djeq15} then: $C_{\N(A)^\bot,\N(B)^\bot}(A,B)$ and $C_{\N(A^*)^\bot,\N(B^*)^\bot}(A^*,B^*)$ are well-defined and closable, and we have $C_{\N(A)^\bot,\N(B)^\bot}(A,B)\subseteq C_{\N(A^*)^\bot,\N(B^*)^\bot}(A^*,B^*)^*$, and $C_{\N(A^*)^\bot,\N(B^*)^\bot}(A^*,B^*)\subseteq C_{\N(A)^\bot,\N(B)^\bot}(A,B)^*$.
%\end{theorem}
%Therefore, if we assume \eqref{djeq15} and we also assume that $C_{\N(A)^\bot,\N(B)^\bot}(A,B)$ is bounded, then the star-supremum of $A$ and $B$ exists (see \cite[Theorem 3.1, Theorem 3.2]{Djikic}).
%The question which still remains is: if \eqref{djeq15} holds, is $C_{\N(A)^\bot,\N(B)^\bot}(A,B)$ necessarily bounded?
Obviously, if $\N(A)^\bot + \N(B)^\bot$ is closed then \eqref{djeq15} would imply $C_{\N(A)^\bot,\N(B)^\bot}(A,B)$ being bounded, and consequently $A$ and $B$ having the star-supremum. This was noted in \cite{Djikic}, but the results of this paper can lead a bit further, in the sense that we can give another, less obvious, sufficient condition. %We will also see that \eqref{djeq15} indeed guarantees the existence of the star-supremum for two operators obtained from $A$ and $B$ in a certain way.

\begin{lemma}\label{fdjl2}
Let $A,B\in \B(\H,\K)$ such that $\R(A-B)$ is closed. The star-supremum of $A$ and $B$ exists if and only if \eqref{djeq15} holds.
\end{lemma}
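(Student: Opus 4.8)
\textbf{Proof proposal for Lemma \ref{fdjl2}.}

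The plan is to reduce the claim to showing that, under the hypothesis $\R(A-B)$ closed, the necessary condition \eqref{djeq15} already forces both of the operators appearing in Lemma \ref{fdjl1} to be bounded. One direction is trivial: if the star-supremum of $A$ and $B$ exists, then \eqref{djeq15} holds by the cancellation argument recalled just before the statement (this needs no closedness hypothesis). So I would concentrate on the converse.

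Assume \eqref{djeq15}. First I would record what \eqref{djeq15} gives us in terms of the operator $C_{\M,\N}(A,B)$ with $\M=\N(A)^\bot$ and $\N=\N(B)^\bot$. As observed in the paragraph preceding the lemma (and in \cite{Djikic}), \eqref{djeq15} implies that $A$ and $B$ coincide on $\M\cap\N$, so $C_{\M,\N}(A,B)$ is well-defined, and moreover that it is closable; by the summary at the end of Section~4 (Theorem~\ref{06djt1}) closability is equivalent to $(A^*-B^*)^{-1}(\M^\bot+\N^\bot)$ being dense in $\K$. The key point is now to upgrade ``closable'' to ``bounded.'' By Theorem~\ref{06djt1}(1), $C_{\M,\N}(A,B)$ is bounded iff $\R(A^*-B^*)\subseteq\M^\bot+\N^\bot=\N(A)+\N(B)$. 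Since $\R(A-B)$ is closed, so is $\R(A^*-B^*)=\N(A-B)^\bot$, and hence $\R(A^*-B^*)$ is itself a \emph{closed} subspace; combined with density of $(A^*-B^*)^{-1}(\M^\bot+\N^\bot)$ I would argue that $\R(A^*-B^*)$ must in fact lie inside $\M^\bot+\N^\bot$. Concretely: Theorem~\ref{djt1} says $C_{\M,\N}(A,B)$ is closed iff $\R(A^*-B^*)+\M^\bot+\N^\bot$ is closed; but $\R(A^*-B^*)$ closed together with the general fact that $\R(A^*-B^*)\subseteq\cl{\M^\bot+\N^\bot}$ (which holds because $A,B$ coincide on $\M\cap\N$) gives $\R(A^*-B^*)+\M^\bot+\N^\bot\subseteq\cl{\M^\bot+\N^\bot}$, and one checks this sum is closed, so $C_{\M,\N}(A,B)$ is closed. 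A closed operator with domain $\M+\N$ that is also closable onto a larger closed operator, together with the quotient-range description, then pins down $\R(A^*-B^*)\subseteq\M^\bot+\N^\bot$, i.e. $C_{\M,\N}(A,B)$ is bounded.

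The remaining half of Lemma~\ref{fdjl1} concerns the adjoint side: $C_{\N(A^*)^\bot,\N(B^*)^\bot}(A^*,B^*)$. Here I would use that $\R(A-B)$ closed is a self-dual hypothesis --- $\R(A^*-B^*)$ is then closed as well --- so the identical argument, applied to $A^*,B^*$ in place of $A,B$, shows this second operator is also well-defined and bounded as soon as the analogue of \eqref{djeq15} holds; but the two identities in \eqref{djeq15} are exactly symmetric under $A\mapsto A^*$, $B\mapsto B^*$ (the first becomes the second and vice versa), so \eqref{djeq15} is preserved. The compatibility condition $C(A,B)=C(A^*,B^*)^*$ from Lemma~\ref{fdjl1} should then follow as in \cite{Djikic}, or be verified directly from the closed-operator identities; I expect this to be routine once boundedness of both pieces is in hand. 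Finally, invoking Lemma~\ref{fdjl1} concludes that the star-supremum exists.

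The main obstacle is the upgrade from closability to boundedness in the second paragraph: one must genuinely use that $\R(A-B)$ (equivalently $\R(A^*-B^*)$) is closed to collapse the dense subspace condition to a genuine inclusion. I would be careful here that ``$\R(A^*-B^*)$ closed'' does not by itself imply $\R(A^*-B^*)+\M^\bot+\N^\bot$ closed in general --- the sum of two closed subspaces need not be closed --- so the argument must exploit the specific geometry $\R(A^*-B^*)\subseteq\cl{\M^\bot+\N^\bot}$ coming from $A,B$ coinciding on $\M\cap\N$, which is where Lemma~\ref{djl5}-type reasoning (or a direct projection argument) does the work.
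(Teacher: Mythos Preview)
Your upgrade from closability to boundedness is the crux, and as written it does not go through. You are effectively claiming: if $(A^*-B^*)^{-1}(\M^\bot+\N^\bot)$ is dense and $\R(A^*-B^*)$ is closed, then $\R(A^*-B^*)\subseteq\M^\bot+\N^\bot$. This is false. Example~\ref{06djex1} with $\alpha=0$ already provides a counterexample: there $A-B=P_\M$ has closed range $\M$, the operator $C_{\M,\N}(A,B)=P_{\M//\N}$ is closed (hence closable), yet it is unbounded, i.e.\ $\R(A^*-B^*)=\M\not\subseteq\M^\bot+\N^\bot$. So ``closability plus closed range of $A-B$'' is strictly weaker than what you need, and your appeal to ``Lemma~\ref{djl5}-type reasoning'' cannot close the gap because the gap is not closable from those hypotheses alone. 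The point is that \eqref{djeq15} carries more information than mere closability of $C_{\M,\N}(A,B)$, and your argument discards that extra information too early.

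The paper's proof extracts that information directly: rewriting \eqref{djeq15} as $A(A^*-B^*)B=0=B(A^*-B^*)A$ gives $(A^*-B^*)(\R(B))\subseteq\N(A)$ and $(A^*-B^*)(\R(A))\subseteq\N(B)$, hence $\R((A^*-B^*)(A-B))\subseteq\N(A)+\N(B)$. The closed-range hypothesis is then used only for the elementary fact $\R((A^*-B^*)(A-B))=\R(A^*-B^*)$, which immediately yields $\R(A^*-B^*)\subseteq\N(A)+\N(B)$ and boundedness via Theorem~\ref{06djt1}. After that, \cite[Theorem~3.2]{Djikic} finishes the argument in one step; there is no need to handle the adjoint side or the compatibility $C(A,B)=C(A^*,B^*)^*$ separately as you propose.
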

\begin{proof}
If \eqref{djeq15} holds, we have: $A(A^*-B^*)B=0=B(A^*-B^*)A$. Therefore, $(A^*-B^*)(\R(B))\subseteq \N(A)$ and $(A^*-B^*)(\R(A))\subseteq \N(B)$. Consequently, $\R((A^*-B^*)(A-B))\subseteq \R((A^*-B^*)A) + \R((A^*-B^*)B) \subseteq \N(A)+\N(B)$. Since $\R(A^*-B^*)$ is closed, $\R((A^*-B^*)(A-B))=\R(A^*-B^*)$, and so $\R(A^*-B^*)\subseteq \N(A)+\N(B)$, which according to Theorem \ref{06djt1} means that $C_{\N(A)^\bot,\N(B)^\bot}(A,B)$ is bounded. Thus, using \cite[Theorem 3.2]{Djikic}, we see that the star-supremum of $A$ and $B$ exists. The opposite implication is already explained.
\end{proof}

\section*{Acknowledgment}

The authors are supported by Grant No. 174007 of the Ministry of Education, Science and Technological Development, Republic of
Serbia.

\bibliography{Literatura}
\bibliographystyle{plain}

\end{document}